\newtheorem{theorem}{Theorem}[section]
\newtheorem{lemma}[theorem]{Lemma}
\newtheorem{proposition}[theorem]{Proposition}
\newtheorem{corollary}[theorem]{Corollary}
\newtheorem{claim}[theorem]{Claim}
\theoremstyle{definition}
\newtheorem{example}{Example}
\theoremstyle{remark}
\newtheorem{remark}{Remark}[section]
\newcommand{\Cov}{\operatorname{Cov}}
\newcommand{\Var}{\operatorname{Var}}
\begin{document}

\title{On the maximal correlation of some stochastic processes}
\author{Yinshan Chang\thanks{Address: College of Mathematics, Sichuan University, Chengdu 610065, China; Email: ychang@scu.edu.cn; Supported by National Key R\&D Program of China (No. 2023YFA1009601)}, Qinwei Chen\thanks{Address: College of Mathematics, Sichuan University, Chengdu 610065, China; Email: qinweic@outlook.com}}

\date{}
\maketitle

\begin{abstract}
We study the maximal correlation coefficient $R(X,Y)$ between two stochastic processes $X$ and $Y$. In the case when $(X,Y)$ is a random walk, we find $R(X,Y)$ using the Cs\'{a}ki-Fischer identity and the lower semicontinuity of the map $\text{Law}(X,Y) \to R(X,Y)$. When $(X,Y)$ is a two-dimensional L\'{e}vy process, we express $R(X,Y)$ in terms of the L\'{e}vy measure of the process and the covariance matrix of the diffusion part of the process. Consequently, for a two-dimensional $\alpha$-stable random vector $(X,Y)$ with $0<\alpha<2$, we express $R(X,Y)$ in terms of $\alpha$ and the spectral measure $\tau$ of the $\alpha$-stable distribution. We also establish analogs and extensions of the Dembo-Kagan-Shepp-Yu inequality and the Madiman-Barron inequality.
\end{abstract}

\section{Introduction}
For two square-integrable non-degenerate real-valued random variables $X$ and $Y$,
the Pearson correlation coefficient $\rho(X,Y)$ is defined by
\[\rho(X,Y)=\frac{\Cov(X,Y)}{\sqrt{\Var(X)}\sqrt{\Var(Y)}}.\]
The correlation coefficient $\rho(X,Y)$
measures the linear dependence of $X$ and $Y$. It is well known that $\rho(X,Y)=0$ if $X$ and $Y$ are independent and square-integrable. However, the converse is not true in general. For two non-degenerate random variables $X$ and $Y$, the maximal correlation coefficient between $X$ and $Y$, introduced by Gebelein \cite{GebeleinMR7220}, is given by
\begin{equation}\label{eq: defn R}
R(X,Y)=\sup\rho(\varphi(X),\psi(Y)),
\end{equation}
where the supremum is over all measurable functions $\varphi$ and $\psi$ such that \[0<E[(\varphi(X))^{2}]<\infty,\quad 0<E[(\psi(Y))^{2}]<\infty.\]
If $X$ or $Y$ is degenerate, we set $R(X,Y)=0$. The quantity $R(X,Y)$ measures the dependence between $X$ and $Y$, with $R(X,Y)=0$ indicating independence, see \cite{SarmanovMR99095}.

According to \cite[Eq. (29)]{RenyiMR0115203}, the maximal correlation coefficient has the following alternative definition: For a random variable $X$, let $L^2_0(X)$ denote the Hilbert space of square-integrable mean $0$ real-valued random variables $Z$ that are measurable with respect to the sigma-field $\sigma(X)$ generated by $X$. For two random variables $X$ and $Y$, the maximal correlation coefficient $R(X,Y)$ is equal to the operator norm of the conditional expectation $\varphi(X)\mapsto E[\varphi(X)|Y]$ from $L^2_0(X)$ to $L^2_0(Y)$.
Equivalently, we have the following expression:
\begin{equation}\label{eq: alt defn R}
R^2(X,Y)=\sup\{E[(E[\varphi(X)|Y])^2]:E[\varphi(X)]=0,E[\varphi^2(X)]=1\}.
\end{equation}

Recently, the maximal correlation coefficient has been generalized by Dadoun and Youssef \cite{DadounYoussefMR4255828} to the context of free probability. However, in this paper, we will concentrate on the classical notion of maximal correlation.

The maximal correlation coefficient plays an important role in various areas of probability and statistics, including information theory \cite{CourtadeMR3638565,MadimanBarronMR2319376}, the hyper-contractivity ribbon of a pair of random variables and the impossibility of non-interactive simulations of joint distributions \cite{AhlswedeGacsMR424401,KamathAnantharam6483335,KamathAnantharamMR3506743}, the optimal transformation for regression \cite{BreimanMR0803258}, the spectral gap of Markov chains and the convergence theory of data augmentation algorithms \cite{LiuMR1279653}.

As noted by R\'{e}nyi in \cite{RenyiMR0115203}, it is often difficult to find the exact value of the maximal correlation coefficient, although it may not seem so at first sight. One reason is that the supremum in \eqref{eq: defn R} generally cannot be replaced by a maximum. The explicit value of the maximal correlation coefficient $R(X,Y)$ is only known in a few cases.
For example, if $(X,Y)$ is jointly Gaussian, then
\begin{equation}\label{eq: jointly Gaussian}
R(X,Y)=|\rho(X,Y)|,
\end{equation}
see \cite{Lancaster}. If $(X,Y)$ is uniformly distributed in the domain $|x|^p+|y|^q\leq 1$ with $p,q>0$, then
\[
R(X,Y)=\frac{1}{\sqrt{(p+1)(q+1)}},
\]
see \cite[Example~5 in Section~2]{CsakiFischerMR0126952}.
The exact value of $R(X,Y)$ was also found in the case when $(X, Y)$ follows a multinomial distribution or multivariate hypergeometric distribution in \cite[Section~6]{CsakiFischerMR0166833}. B\"{u}cher and Staud \cite{BucherStaudMR4835998} obtained the maximal correlation coefficient for the bivariate Marshall-Olkin exponential distribution. There are also several results for maximal correlation coefficients by using orthogonal polynomials, as discussed in \cite{LopezCastanoMR2207171,LopezSalamancaMR3252649,LopezSalamancaMR1622148,NevzorovMR1202799,PapadatosXifaraMR3054093,SzekelyMoriMR792800,TerrellMR704575}. Among these results, \cite{SzekelyMoriMR792800} gives the maximal correlation coefficients for Dirichlet distributions, which provides sharp upper bounds for the maximal correlation coefficients for order statistics studied in \cite{SzekelyMoriMR792800,TerrellMR704575}. In \cite{LopezSalamancaMR1622148,NevzorovMR1202799}, they studied the maximal correlation between the $i$-th and $j$-th records. In \cite{PapadatosXifaraMR3054093}, they provided a unified approach to obtain the maximal correlation coefficient for a subclass of Lancaster distributions.
There are also some results on the maximum correlation coefficients for arbitrary distributions. For instance, if $(X_1,Y_1)$ and $(X_2,Y_2)$ are independent, then
\begin{equation*}
    R((X_1,X_2),(Y_1,Y_2)) = \max(R(X_1,Y_1),R(X_2,Y_2)).
\end{equation*}
This is known as the Cs\'{a}ki-Fischer identity \cite[Theorem~6.2]{CsakiFischerMR0166833}, see also \cite[Theorem~1]{WitsenhausenMR363678}. If $X_1,X_2,\ldots,X_n$ are independent and identically distributed (i.i.d.) non-degenerate real-valued
random variables, then for $m\leq n$, we have
\begin{equation}\label{eq: BDKS identity}
R(S_n,S_m)=\sqrt{m/n},
\end{equation}
where $S_k=\sum_{i=1}^{k}X_i$ is the partial sum of $(X_i)_i$, see \cite{BrycDemboKaganMR2141340,DemboKaganSheppMR1828509,NovakMR2089006}. The upper bound
\[R(S_n,S_m)\leq \sqrt{m/n}\]
is known as the Dembo-Kagan-Shepp (DKS) inequality. Yu \cite[Theorem~4.1]{YuMR2422962} further generalized \eqref{eq: BDKS identity} to
\begin{equation}\label{eq: Yu}
R\left(\sum_{i=1}^{m}X_i,\sum_{j=\ell+1}^{n}X_j\right)=\frac{m-\ell}{\sqrt{m(n-\ell)}}
\end{equation}
for $1\leq \ell+1\leq m\leq n$.

In this paper, we are interested in the maximal correlation coefficient between two stochastic processes, e.g., two random walks, two L\'{e}vy processes, or the maximal correlation coefficient between two randomly chosen sub-vectors of a third common random vector. Our main results are
as follows:

\subsection{Maximal correlation coefficients of random walks}
Let $(\xi_n,\eta_n)_{n\geq 1}$ be i.i.d. random vectors. Let $(S_n,T_n)_{n\geq 0}$ be a random walk starting from $0$ with increments $(\xi_n,\eta_n)$, that is,
\begin{equation}\label{eq: defn random walks}
(S_0,T_0)=0\text{ and for }n\geq 1, (S_n,T_n)=\sum_{m=1}^{n}(\xi_m,\eta_m).
\end{equation}
Let $S=(S_n)_{n\geq 0}$ and $T=(T_n)_{n\geq 0}$.
Our first main result is
\begin{theorem}\label{thm: random walks}
For any $m\ge 1$,
\[R((S_n)_{n\leq m},(T_n)_{n\leq m})=R(\xi_1,\eta_1)=R(S,T).\]
\end{theorem}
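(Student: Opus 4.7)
The plan is to prove the finite-$m$ equality $R((S_n)_{n\leq m},(T_n)_{n\leq m})=R(\xi_1,\eta_1)$ as a direct consequence of the Cs\'{a}ki-Fischer identity applied to the increments, and then to pass from finite $m$ to the infinite process by an $L^2$-approximation, which plays the role of the ``lower semicontinuity'' ingredient mentioned in the abstract.

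For the first equality, I would first change coordinates from partial sums to increments: since $(\xi_1,\dots,\xi_m)\mapsto(S_1,\dots,S_m)$ is an invertible linear transformation, $\sigma(S_0,\dots,S_m)=\sigma(\xi_1,\dots,\xi_m)$, and similarly on the $T/\eta$ side. Because $R(X,Y)$ depends only on the $\sigma$-algebras generated by $X$ and $Y$, this identifies $R((S_n)_{n\leq m},(T_n)_{n\leq m})$ with $R((\xi_n)_{n\leq m},(\eta_n)_{n\leq m})$. The pairs $\{(\xi_n,\eta_n)\}_{n\leq m}$ are mutually independent, so iterating the Cs\'{a}ki-Fischer identity (group $(\xi_1,\eta_1)$ against the block $(\xi_2,\dots,\xi_m;\eta_2,\dots,\eta_m)$ and induct on $m$) gives
\[
R((\xi_n)_{n\leq m},(\eta_n)_{n\leq m}) = \max_{1\le n\le m} R(\xi_n,\eta_n) = R(\xi_1,\eta_1),
\]
where the last equality uses that the pairs are identically distributed.

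For the second equality, the inequality $R(\xi_1,\eta_1)\leq R(S,T)$ is immediate from the definition \eqref{eq: defn R} since $\sigma(S_0,\dots,S_m)\subset\sigma(S)$: any admissible $\varphi,\psi$ in the finite-$m$ case remain admissible for the infinite process. For the reverse, fix $\varphi\in L^2_0(\sigma(S))$, $\psi\in L^2_0(\sigma(T))$ with $E[\varphi^2]=E[\psi^2]=1$, and set $\varphi_m=E[\varphi\mid\sigma(S_0,\dots,S_m)]$, $\psi_m=E[\psi\mid\sigma(T_0,\dots,T_m)]$. By L\'{e}vy's upward martingale convergence theorem, $\varphi_m\to\varphi$ and $\psi_m\to\psi$ in $L^2$, so $E[\varphi\psi]=\lim_m E[\varphi_m\psi_m]$. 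Combining with the finite-$m$ result,
\[
|E[\varphi_m\psi_m]|\leq R((S_n)_{n\leq m},(T_n)_{n\leq m})\,\|\varphi_m\|_2\|\psi_m\|_2 \leq R(\xi_1,\eta_1),
\]
and passing to the limit yields $|E[\varphi\psi]|\leq R(\xi_1,\eta_1)$. Taking the supremum over $\varphi,\psi$ gives $R(S,T)\leq R(\xi_1,\eta_1)$ and closes the chain.

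Once the Cs\'{a}ki-Fischer identity is in hand, the argument is short; the main conceptual point is recognizing that the passage $m\to\infty$ is exactly a ``lower semicontinuity'' statement for $R$ along the increasing filtration $\sigma(S_0,\dots,S_m)\uparrow\sigma(S)$, and that it is handled cleanly by the $L^2$-martingale approximation above. The only technical check is that Cs\'{a}ki-Fischer iterates to $m$ independent pairs, which is straightforward once one notes that $(\xi_1,\eta_1)$ is independent of the block $(\xi_2,\dots,\xi_m;\eta_2,\dots,\eta_m)$ as a pair of random vectors.
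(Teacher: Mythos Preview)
Your proof is correct. The first equality is handled the same way as in the paper: pass from partial sums to increments and iterate the Cs\'{a}ki--Fischer identity.

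For the second equality, however, you take a genuinely different route. The paper embeds the finite-horizon walks into continuous time via $S^{(m)}_t=S_{\min(t,m)}$, argues that $(S^{(m)},T^{(m)})\to(S,T)$ in the Skorokhod space $D_{\mathbb{R}^2}[0,\infty)$, and then invokes its general lower-semicontinuity lemma for $\mathrm{Law}(X,Y)\mapsto R(X,Y)$ under weak convergence in Polish spaces (Lemma~\ref{lem: lower semi-continuity}). You instead exploit the filtration structure directly: approximate $\varphi\in L^2_0(\sigma(S))$ by $\varphi_m=E[\varphi\mid\sigma(S_0,\dots,S_m)]$ and use L\'{e}vy's upward theorem for $L^2$-convergence, together with the contractivity $\|\varphi_m\|_2\le\|\varphi\|_2$. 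This is more elementary and self-contained for the present statement, since it avoids the Skorokhod-space topology and the approximation of $L^2$ functions by bounded continuous ones that underlies the paper's lemma. The paper's approach, on the other hand, is tailored to fit its broader program: the same lower-semicontinuity lemma is reused for L\'{e}vy processes, stable random vectors, and the new proof of Lancaster's identity, where one genuinely needs weak convergence rather than a nested-filtration limit.
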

The first equality is a direct consequence of the Cs\'{a}ki-Fischer identity (Theorem~\ref{thm: Csaki-Fisher}) since \[R((S_n)_{n\leq m},(T_n)_{n\leq m})=R((\xi_1,\xi_2,\ldots,\xi_m),(\eta_1,\eta_2,\ldots,\eta_m)).\]
Our main contribution is the proof of the second equality, where we use the lower semicontinuity of $\text{Law}(X,Y)\mapsto R(X,Y)$ (Lemma~\ref{lem: lower semi-continuity}). For self-containedness, we also provide a new probabilistic proof of the Cs\'{a}ki-Fischer identity. As a consequence of Theorem~\ref{thm: random walks}, the central limit theorem and the lower semicontinuity of $\text{Law}(X,Y)\mapsto R(X,Y)$ (Lemma~\ref{lem: lower semi-continuity}), we provide a new proof of \eqref{eq: jointly Gaussian} in Subsection~\ref{subsect: new proof of Lancaster}. This is the main motivation for studying the maximal correlation coefficients for random walks. Similarly, using the law of rare events instead of the central limit theorem, we see that \eqref{eq: jointly Gaussian} holds for bivariate Poisson distributions.

\subsection{Maximal correlation coefficients of two-dimensional L\'{e}vy processes and bivariate stable distributions}
By analyzing maximal correlation coefficients for random walks, we have established the maximal correlation coefficients for bivariate Gaussian vectors. Consequently, it seems logical to investigate the problem of bivariate stable distributions through the analysis of L\'{e}vy processes. We refer to Applebaum's book \cite{ApplebaumMR2512800} for properties of L\'{e}vy processes.
\begin{theorem}\label{thm: Levy}
Let $(X_t,Y_t)$ be a two-dimensional L\'{e}vy process with the characteristic triple $(b,\Sigma,\nu)$. Define
\[\rho=\left\{\begin{array}{ll}
\Sigma_{12}/\sqrt{\Sigma_{11}\Sigma_{22}}, & \Sigma_{11}\Sigma_{22}>0,\\
0, & \Sigma_{11}\Sigma_{22}=0.
\end{array}\right.\]
Let $\mathrm{Op}(\nu)$ be the minimal constant $s$ such that the inequality
\[\int_{\mathbb{R}^2}\varphi(x)\psi(y)\,\nu(\mathrm{d}x,\mathrm{d}y)\leq s\sqrt{\int_{\mathbb{R}^2}(\varphi(x))^2\,\nu(\mathrm{d}x,\mathrm{d}y)\int_{\mathbb{R}^2}(\psi(y))^2\,\nu(\mathrm{d}x,\mathrm{d}y)}\]
holds for all measurable functions $\varphi(x)$ and $\psi(y)$ such that $\varphi(0)=\psi(0)=0$. Then we have that
\[R((X_t)_{t\geq 0},(Y_t)_{t\geq 0})=\max(|\rho|,\mathrm{Op}(\nu)).\]
\end{theorem}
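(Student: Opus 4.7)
The plan is to split $(X_t, Y_t)$ via the L\'evy--It\^o decomposition into a drift plus a 2D Brownian part $(B^1, B^2)$ of covariance matrix $\Sigma$ and an independent (compensated) pure-jump part $(J^1, J^2)$ of L\'evy measure $\nu$, then analyze the Brownian and jump factors separately using Theorem~\ref{thm: random walks} and the Cs\'aki--Fischer identity. The drift is deterministic and plays no role in $R$. For the upper bound, $X$ (respectively $Y$) is a measurable functional of $(B^1, J^1)$ (respectively $(B^2, J^2)$), and the two process pairs $((B^1_t)_t, (B^2_t)_t)$ and $((J^1_t)_t, (J^2_t)_t)$ are independent; an extension of Cs\'aki--Fischer to pairs of stochastic processes, obtained by finite-dimensional approximation and Lemma~\ref{lem: lower semi-continuity}, yields
\[
R((X_t)_t, (Y_t)_t) \le \max\bigl(R((B^1_t)_t, (B^2_t)_t),\, R((J^1_t)_t, (J^2_t)_t)\bigr).
\]
For the matching lower bound, both $B^1$ and $J^1$ are recoverable from the paths of $X$ (the jump part by compensating the sum of jumps; the continuous part as the remainder), hence $\sigma((X_t)_t)$-measurable, and symmetrically for $Y$. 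It therefore remains to show $R((B^1_t)_t, (B^2_t)_t) = |\rho|$ and $R((J^1_t)_t, (J^2_t)_t) = \mathrm{Op}(\nu)$.

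For the Brownian factor, sampling $(B^1, B^2)$ at multiples of $\delta > 0$ yields a random walk with i.i.d.\ Gaussian increments of law $N(0, \delta \Sigma)$; Theorem~\ref{thm: random walks} combined with Lancaster's identity~\eqref{eq: jointly Gaussian} gives $R((B^1_{k\delta})_k, (B^2_{k\delta})_k) = R(B^1_\delta, B^2_\delta) = |\rho|$, and letting $\delta \to 0^+$ along dyadics (using right-continuity of Brownian paths and upward continuity of $R$ under increasing sub-$\sigma$-fields) produces $R((B^1_t)_t, (B^2_t)_t) = |\rho|$. For the jump lower bound, fix $\phi, \psi \in L^2(\nu)$ with $\phi(0) = \psi(0) = 0$ and form the compensated jump sums
\[
F := \sum_{0 < s \le T}\phi(\Delta J^1_s) - T\int\phi(x)\,\nu(dx, dy), \qquad G := \sum_{0 < s \le T}\psi(\Delta J^2_s) - T\int\psi(y)\,\nu(dx, dy).
\]
These are mean-zero elements of $L^2(\sigma((J^1_t)_t))$ and $L^2(\sigma((J^2_t)_t))$ respectively, and the It\^o isometry for compensated Poisson integrals yields $\rho(F, G) = \int \phi\psi\,d\nu /\sqrt{\int \phi^2\,d\nu \int \psi^2\,d\nu}$; taking the supremum over $\phi, \psi$ delivers $R((J^1_t)_t, (J^2_t)_t) \ge \mathrm{Op}(\nu)$.

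The matching upper bound $R((J^1_t)_t, (J^2_t)_t) \le \mathrm{Op}(\nu)$ is the hard part. My plan is to use the Wiener--It\^o chaos expansion for the compensated 2D Poisson random measure $\widetilde N$ of intensity $dt \otimes d\nu$: every mean-zero $F \in L^2(\sigma((J^1_t)_t))$ has a unique orthogonal expansion $F = \sum_{n \ge 1} I_n(\widetilde f_n)$ whose symmetric kernels $\widetilde f_n$, apart from the time arguments, depend only on the first spatial coordinates and vanish whenever any $x_i = 0$; analogously $G = \sum_{n \ge 1} I_n(\widetilde g_n)$ with $\widetilde g_n$ depending only on the second coordinates and vanishing when any $y_i = 0$. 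The It\^o isometry then yields $\Cov(F, G) = \sum_n n!\,\langle \widetilde f_n, \widetilde g_n\rangle_{L^2((dt \otimes d\nu)^{\otimes n})}$, and iterating the defining inequality of $\mathrm{Op}(\nu)$ once per spatial variable followed by a Cauchy--Schwarz in the time variables gives $\langle \widetilde f_n, \widetilde g_n\rangle \le \mathrm{Op}(\nu)^n \|\widetilde f_n\|\,\|\widetilde g_n\|$. Since $0 \le \mathrm{Op}(\nu) \le 1$, one has $\mathrm{Op}(\nu)^n \le \mathrm{Op}(\nu)$ for $n \ge 1$, and a final Cauchy--Schwarz across $n$ delivers $\Cov(F, G) \le \mathrm{Op}(\nu)\sqrt{\Var(F)\Var(G)}$, completing the bound. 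The main technical difficulties lie in justifying the coordinate-restricted form of the chaos kernels (with care when $\nu$ charges the axes) and in the rigorous extension of Cs\'aki--Fischer from random vectors to pairs of stochastic processes.
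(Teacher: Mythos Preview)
Your proposal is correct and takes a genuinely different route from the paper's proof. The paper does not first decouple the Brownian and jump parts via Cs\'aki--Fischer; instead it invokes the It\^o martingale representation for L\'evy filtrations (Theorem~\ref{thm: martingale representation}), writing any $F\in L^2(\mathcal{F}^X_T)$ as $E[F]+\int_0^T\varphi^X(t)\,dB^X_t+\int_0^T\!\int\psi^X(t,x)\,\widetilde N^X(dt,dx)$ with \emph{predictable random} integrands, and similarly for $G$. The It\^o isometry together with a cross-orthogonality claim (Claim~\ref{claim: orthogonality}) then splits $\Cov(F,G)$ into a Brownian piece bounded by $|\rho|\sqrt{E[F_B^2]E[G_B^2]}$ and a jump piece bounded by $\mathrm{Op}(\nu)\sqrt{E[F_N^2]E[G_N^2]}$, with a final Cauchy--Schwarz across the two pieces. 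Because the representation is first-order, no tensorisation of $\mathrm{Op}(\nu)$ is needed: the defining inequality is applied once, pointwise in $t$, to $\psi^X(t,\cdot)$ and $\psi^Y(t,\cdot)$, followed by Cauchy--Schwarz in $t$.

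Your argument trades the martingale representation theorem for the Poisson chaos expansion plus the tensorisation $\mathrm{Op}(\nu^{\otimes n})\le\mathrm{Op}(\nu)^n\le\mathrm{Op}(\nu)$; this is more modular and makes the role of $\mathrm{Op}(\nu)$ very transparent, at the cost of verifying that chaos kernels of $\sigma(N^X)$-measurable functionals depend only on the $x$-variables (which follows from uniqueness of the chaos decomposition after lifting the $\widetilde N^X$-expansion to a $\widetilde N$-expansion). One simplification you can make: the Cs\'aki--Fischer identity as proved in Theorem~\ref{thm: Csaki-Fisher} already applies to random elements in arbitrary measurable spaces, so no separate extension to processes via finite-dimensional approximation and Lemma~\ref{lem: lower semi-continuity} is required.
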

As an application of the above result, we can get the maximal correlation coefficient of a bivariate stable distribution with the stability index $\alpha\in(0,2)$.
\begin{theorem}\label{thm: stable}
Let $(X,Y)\in\mathbb{R}^2$ be a stable random vector with index $\alpha\in(0,2)$. Let $C_{++}$ , $C_{+-}$, $C_{-+}$, $C_{--}$, $D^{x}_{+}$, $D^{x}_{-}$, $D^{y}_{+}$, $D^{y}_{-}$ be some integrals defined in Lemma~\ref{lem: stable Op nu}. Then we have that
\[R(X,Y)=\left\|\begin{pmatrix}
C_{++}/\sqrt{D^{x}_{+}D^{y}_{+}} & \quad C_{+-}/\sqrt{D^{x}_{+}D^{y}_{-}}\\
C_{-+}/\sqrt{D^{x}_{-}D^{y}_{+}} & \quad C_{--}/\sqrt{D^{x}_{-}D^{y}_{-}}
\end{pmatrix}\right\|_2\]
with the convention that $0/0=0$, where $\|\cdot\|_2$ denotes the spectral norm.
\end{theorem}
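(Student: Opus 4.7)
The plan is to apply Theorem~\ref{thm: Levy} to the stable L\'evy process $(X_t, Y_t)_{t\ge 0}$ with $(X_1, Y_1) \stackrel{d}{=} (X,Y)$. Because $\alpha < 2$, any $\alpha$-stable L\'evy process has vanishing diffusion matrix $\Sigma = 0$, so $\rho = 0$ and Theorem~\ref{thm: Levy} yields $R((X_t),(Y_t)) = \mathrm{Op}(\nu)$; combined with the trivial bound $R(X,Y) = R(X_1, Y_1) \le R((X_t),(Y_t))$ and the self-similarity $(X_t, Y_t) \stackrel{d}{=} t^{1/\alpha}(X_1, Y_1)$ (which forces the reverse inequality at any single time), this reduces the theorem to the identity $\mathrm{Op}(\nu) = \|M\|_2$, where $M$ is the $2\times 2$ matrix appearing in the statement. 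The L\'evy measure $\nu$ of an $\alpha$-stable law admits the polar form $\nu(\mathrm{d}x\,\mathrm{d}y) = r^{-1-\alpha}\,\mathrm{d}r \otimes \tau(\mathrm{d}\theta)$ with $\tau$ the finite spectral measure on $S^1$ and $(x,y) = (r\cos\theta, r\sin\theta)$, so $\nu$ is scale-invariant with homogeneity exponent $\alpha$, and this is the structural fact I want to exploit.

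The central idea is to linearise this scaling invariance by a Mellin-type substitution: to $\varphi$ with $\varphi(0) = 0$ associate the pair $f_\pm(u) := e^{-u\alpha/2}\varphi(\pm e^u)$, and analogously $g_\pm$ to $\psi$. A direct computation in polar coordinates (the change of variables $u = \log r + \log|\cos\theta|$ turns $r^\alpha \cdot r^{-1-\alpha}\,\mathrm{d}r$ into $\mathrm{d}u$) gives
\[
\int \varphi(x)^2\,\nu(\mathrm{d}x\,\mathrm{d}y) = \|f_+\|_2^2\,D^x_+ + \|f_-\|_2^2\,D^x_-,
\]
where $D^x_\pm = \int_{\pm\cos\theta > 0}|\cos\theta|^\alpha\,\tau(\mathrm{d}\theta)$, and analogously for $\|\psi\|^2_\nu$. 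Splitting $\int\varphi\psi\,\mathrm{d}\nu$ over the four open quadrants $Q_{\epsilon_1\epsilon_2}$, the same substitution turns each piece into
\[
\int_{Q_{\epsilon_1\epsilon_2}}(|\cos\theta||\sin\theta|)^{\alpha/2}\left(\int_{\mathbb{R}} f_{\epsilon_1}(u)\,g_{\epsilon_2}(u + c(\theta))\,\mathrm{d}u\right)\tau(\mathrm{d}\theta), \qquad c(\theta) = \log(|\sin\theta|/|\cos\theta|),
\]
whose inner integral is at most $\|f_{\epsilon_1}\|_2\|g_{\epsilon_2}\|_2$ by Cauchy-Schwarz, leaving the purely angular factor $C_{\epsilon_1\epsilon_2}$. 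Writing $\tilde a_\epsilon = \|f_\epsilon\|_2\sqrt{D^x_\epsilon}$ and $\tilde b_\epsilon = \|g_\epsilon\|_2\sqrt{D^y_\epsilon}$, the ratio $|\int\varphi\psi\,\mathrm{d}\nu|/(\|\varphi\|_\nu\|\psi\|_\nu)$ is bounded by $\tilde a^{\top}M\tilde b/(\|\tilde a\|_2\|\tilde b\|_2) \le \|M\|_2$, which gives the upper bound $\mathrm{Op}(\nu) \le \|M\|_2$.

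The main technical obstacle is the matching lower bound: the Cauchy-Schwarz step in $u$ is tight only when $g_{\epsilon_2}(\cdot + c(\theta))$ is proportional to $f_{\epsilon_1}$, which cannot hold for all $\theta$ simultaneously. I propose to circumvent this via the approximate extremizers $f_\epsilon(u) = a_\epsilon \mathbf{1}_{[-N, N]}(u)/\sqrt{2N}$ and $g_\epsilon(u) = b_\epsilon \mathbf{1}_{[-N, N]}(u)/\sqrt{2N}$, where $(a, b)$ are proportional to the top left/right singular vectors of $M$ rescaled by $1/\sqrt{D^x_\epsilon}$ and $1/\sqrt{D^y_\epsilon}$; for each fixed $\theta$, the overlap $(2N)^{-1}\int\mathbf{1}_{[-N, N]}(u)\mathbf{1}_{[-N, N]}(u + c(\theta))\,\mathrm{d}u = (1 - |c(\theta)|/(2N))_+$ converges to $1$ as $N \to \infty$, and dominated convergence (with uniform bound $1$) yields $\mathrm{Op}(\nu) \ge \|M\|_2$. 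The convention $0/0 = 0$ handles the degenerate case where some $D^x_\epsilon$ or $D^y_\epsilon$ vanishes: then $\tau$ is supported away from the corresponding half-line and the associated component of $\varphi$ or $\psi$ contributes to none of the integrals, so the matrix effectively collapses to a smaller one.
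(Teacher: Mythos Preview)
Your computation of $\mathrm{Op}(\nu)=\|M\|_2$ is correct and is essentially the paper's Lemma~\ref{lem: stable Op nu} in Mellin coordinates: your $\|f_\pm\|_2^2$ are exactly the paper's $F_\pm$, your translation kernel in $u$ reproduces the paper's Cauchy--Schwarz splitting with the weights $|\tan\theta|^{\alpha/4}$, $|\cot\theta|^{\alpha/4}$, and your indicator test functions on $[-N,N]$ play the same role as the paper's near-extremizers $|x|^{\alpha/2-\varepsilon}$ (which in your coordinates read $f_\pm(u)=b_\pm\,1_{u>0}e^{-\varepsilon u}$).

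There is, however, a genuine gap in the reduction step. You write that the self-similarity $(X_t,Y_t)\stackrel{d}{=}t^{1/\alpha}(X_1,Y_1)$ ``forces the reverse inequality at any single time'' and thereby yields $R(X_1,Y_1)\ge R((X_t)_{t\ge 0},(Y_t)_{t\ge 0})$. It does not. Self-similarity of the one-dimensional marginals only gives $R(X_t,Y_t)=R(X_1,Y_1)$ for each fixed $t$; it says nothing about functionals of the whole path, and in general $R((X_t)_t,(Y_t)_t)$ can strictly exceed $\sup_t R(X_t,Y_t)$ (Examples~\ref{exam: 3} and~\ref{exam: 4} in the paper illustrate exactly this phenomenon for non-stable L\'evy processes). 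What is needed is an argument that controls the path-level correlation by the time-$1$ correlation.

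The paper closes this gap as follows: form the random walk $(S_n,T_n)$ with i.i.d.\ increments distributed as $(X_1,Y_1)$; by Theorem~\ref{thm: random walks} one has $R((S_n)_n,(T_n)_n)=R(X_1,Y_1)$; the rescaled walk $\bigl(n^{-1/\alpha}(S_{[nt]}-c_{[nt]}),\,n^{-1/\alpha}(T_{[nt]}-d_{[nt]})\bigr)$ converges in $D_{\mathbb{R}^2}[0,\infty)$ to the stable process; and then Lemma~\ref{lem: lower semi-continuity} (lower semicontinuity of $R$) gives $R((X_t)_t,(Y_t)_t)\le R(X_1,Y_1)$. An equivalent route that stays closer to your self-similarity intuition is to observe that $(X_{k2^{-n}},Y_{k2^{-n}})_{k\ge 0}$ is a random walk, invoke Theorem~\ref{thm: random walks} together with $R(X_{2^{-n}},Y_{2^{-n}})=R(X_1,Y_1)$, and then pass to the limit $n\to\infty$ using that the c\`adl\`ag path is measurable with respect to its dyadic skeleton; but either way Theorem~\ref{thm: random walks} (or an equivalent independent-increments argument) is essential, and scaling of single-time marginals alone is not enough.
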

For random vectors $(\widetilde{X},\widetilde{Y})$ in the domain of attraction of stable laws, Theorem~\ref{thm: stable} provides a lower bound for $R(\widetilde{X},\widetilde{Y})$, see Remark~\ref{rem: doc lower bound} below. For a stable process $(X_t,Y_t)_{t\geq 0}$, coincidentally, we have $R((X_t)_{t\geq 0},(Y_t)_{t\geq 0})=R(X_1,Y_1)$. However, for a general L\'{e}vy process, this equality does not hold, see Examples~\ref{exam: 3} and \ref{exam: 4}.

\subsection{Extensions of the Dembo-Kagan-Shepp-Yu inequality}
The Dembo-Kagan-Shepp-Yu (DKSY) inequality
\begin{equation}\label{eq: Yu ineq}
R\left(\sum_{i=1}^{m}X_i,\sum_{j=\ell+1}^{n}X_j\right)\leq\frac{m-\ell}{\sqrt{m(n-\ell)}}
\end{equation}
is a sharp inequality for two partial sums of i.i.d. non-degenerate real-valued random variables.
It is also natural to consider other functions instead of partial sums. Therefore, we are interested in the maximal correlation coefficient between two randomly chosen subvectors of a common third random vector.
Madiman and Barron \cite{MadimanBarronMR2319376} found a sharp upper bound for the maximal correlation coefficient between a random vector and its randomly chosen subvector.

\begin{theorem}[Madiman-Barron]\label{thm: R X (T,X_T)}
Let $X_1,X_2,\ldots,X_n$ be non-degenerate independent random variables taking values in a general measurable space $(F,\mathcal{F})$. Fix a special point $\partial$ outside of $F$. Let $T$ be a random subset of $[n]=\{1,2,\ldots,n\}$. Suppose that $T$ is independent of $(X_1,X_2,\ldots,X_n)$. For $i=1,2,\ldots,n$, define
\[Y_i=\left\{\begin{array}{ll}
X_i, & \text{if }i\in T,\\
\partial, & \text{otherwise}.
\end{array}\right.\]
Then for $n\geq 1$, we have
\[R((X_1,X_2,\ldots,X_n),(Y_1,Y_2,\ldots,Y_n))=
\sqrt{\max\{P(i\in T):i\in[n]\}}.
\]
\end{theorem}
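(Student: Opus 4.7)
The plan is to use the variational formula \eqref{eq: alt defn R} and establish matching lower and upper bounds. For the lower bound, I would pick $i^* \in [n]$ achieving $P(i^* \in T) = \max_{i \in [n]} P(i \in T)$, together with a function $f$ satisfying $E[f(X_{i^*})] = 0$ and $E[f(X_{i^*})^2] = 1$ (which exists because $X_{i^*}$ is non-degenerate), and set $\varphi(X_1,\ldots,X_n) = f(X_{i^*})$. Since the pattern of $\partial$'s in $Y = (Y_1,\ldots,Y_n)$ deterministically reveals $T$, and $T$ is independent of $(X_j)_j$, a direct computation yields $E[\varphi \mid Y] = f(X_{i^*})\,\mathbf{1}_{\{i^* \in T\}}$, whence $E[(E[\varphi \mid Y])^2] = P(i^* \in T)$, giving $R \geq \sqrt{\max_i P(i \in T)}$.

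For the upper bound I would invoke the Hoeffding (ANOVA) decomposition of $L^2(X_1,\ldots,X_n)$: each mean-zero $\varphi \in L^2$ admits a unique orthogonal decomposition
\[
\varphi = \sum_{\emptyset \neq A \subseteq [n]} \varphi_A, \qquad E[\varphi^2] = \sum_{\emptyset \neq A \subseteq [n]} E[\varphi_A^2],
\]
where $\varphi_A$ is $\sigma((X_i)_{i \in A})$-measurable and satisfies the vanishing property $E[\varphi_A \mid (X_j)_{j \neq i}] = 0$ for every $i \in A$. The crux is to identify $E[\varphi_A \mid Y]$: if $A \subseteq T$, then $\varphi_A$ is $Y$-measurable and thus equals its own conditional expectation; if $A \not\subseteq T$, fix any $i \in A \setminus T$, and the independence of $X_i$ from the rest of $Y$ (given $\{i \notin T\}$) combined with the Hoeffding vanishing property forces $E[\varphi_A \mid Y] = 0$. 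Hence $E[\varphi \mid Y] = \sum_{A \subseteq T} \varphi_A$, and orthogonality of the Hoeffding components after conditioning on $T$ gives
\[
E[(E[\varphi \mid Y])^2] = \sum_{\emptyset \neq A \subseteq [n]} P(A \subseteq T)\, E[\varphi_A^2] \leq \max_{i \in [n]} P(i \in T)\cdot E[\varphi^2],
\]
using the trivial bound $P(A \subseteq T) \leq P(i \in T)$ for any $i \in A$. Taking the supremum over $\varphi$ yields $R^2 \leq \max_i P(i \in T)$, matching the lower bound.

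The main obstacle, in my view, is not the structure of the argument but the careful construction of the Hoeffding decomposition over an abstract measurable space $(F,\mathcal{F})$. One must verify that the subspaces $H_A$ of $\sigma((X_i)_{i \in A})$-measurable $L^2$ functions with the vanishing property are closed, pairwise orthogonal, and jointly span $L^2(X_1,\ldots,X_n)$. A clean route is to introduce the orthogonal projections $P_i : L^2 \to L^2((X_j)_{j \neq i})$; independence of the coordinates makes the $P_i$'s commute, and then $\varphi_A = \bigl(\prod_{i \in A}(I - P_i)\bigr)\bigl(\prod_{i \notin A} P_i\bigr)\varphi$ produces the decomposition. Once this machinery is in place, the remainder of the proof is essentially orthogonality bookkeeping.
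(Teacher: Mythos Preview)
Your proposal is correct and follows essentially the same route as the paper. For the upper bound you invoke the Hoeffding/ANOVA decomposition with commuting projections $P_i$, which is exactly the machinery the paper sets up (in its Section~\ref{sect: MB like}, with $\overline{E}_t = \prod_{j\in t}(I-E_j)\prod_{k\notin t}E_k$) and attributes to Madiman--Barron; for the lower bound you compute $E[f(X_{i^*})\mid Y]$ directly via \eqref{eq: alt defn R}, whereas the paper instead reduces to the single-coordinate statement $R(X_i,Y_i)=\sqrt{P(i\in T)}$ (Lemma~\ref{lem: R(X,XB)}) and then uses $R(X,Y)\geq R(X_i,Y_i)$---but your test function $f(X_{i^*})$ is precisely the one that witnesses that lemma, so the two arguments are the same computation in different packaging.
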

\begin{remark}
The random variables $X_1,X_2,\ldots,X_n$ need not be identically distributed.
\end{remark}
\begin{remark}\label{rem: MB implies DKS}
The Madiman-Barron inequality is a generalization of the Dembo-Kagan-Shepp inequality, see Appendix~\ref{appendix: C}.
\end{remark}
Madiman-Barron \cite{MadimanBarronMR2319376} proved the following Madiman-Barron inequality
\[R((X_1,X_2,\ldots,X_n),(Y_1,Y_2,\ldots,Y_n))\leq
\sqrt{\max\{P(i\in T):i\in[n]\}}.
\]
This upper bound appears in a different form in \cite[Lemmas~2 and 4]{MadimanBarronMR2319376}. In the same paper, this upper bound is further used to deduce generalized entropy power inequalities, the monotonicity of Fisher information, and the Fisher information inequality of Stam. It is an important follow-up work of the breakthrough work \cite{ArtsteinBallBartheNaorMR2083473} by Artstein, Ball, Barthe and Naor on the monotonicity of the Shannon entropy. However, we find no proof in the literature for the lower bound
\[R((X_1,X_2,\ldots,X_n),(Y_1,Y_2,\ldots,Y_n))\geq
\sqrt{\max\{P(i\in T):i\in[n]\}}.
\]
We will give a proof in Subsection~\ref{subsect: proof of MB lower bound}.
For the lower bound, it is crucial that $\partial$ is outside of $F$, see Remark~\ref{rem: partial}. However, for the upper bound, $\partial$ is not necessarily outside of $F$.

Next, we extend Madiman-Barron's result to the case of the maximal correlation coefficient of two randomly chosen subvectors:
\begin{theorem}\label{thm: R (S,X_S) (T,X_T)}
Let $X_1,X_2,\ldots,X_n$ be non-degenerate independent random variables taking values in a general measurable space $(F,\mathcal{F})$. Fix a special point $\partial$ outside of $F$. Let $S$ and $T$ be two random subsets of $[n]=\{1,2,\ldots,n\}$. Suppose that $(S,T)$ is independent of $(X_1,X_2,\ldots,X_n)$. For $i=1,2,\ldots,n$, define
\[Y_i=\left\{\begin{array}{ll}
X_i, & \text{if }i\in S,\\
\partial, & \text{otherwise},
\end{array}\right.\text{ and }Z_i=\left\{\begin{array}{ll}
X_i, & \text{if }i\in T,\\
\partial, & \text{otherwise}.
\end{array}\right.\]
Then for $n\geq 1$, we have that
\begin{equation}\label{eq: R Y Z}
R((Y_1,Y_2,\ldots,Y_n),(Z_1,Z_2,\ldots,Z_n))=\max(R(S,T),\max(r_j:j\in[n])),
\end{equation}
where $r_j$ is the best constant $r$ for the following inequality
\begin{equation}
\sum_{s,t:j\in s\cap t}P(S=s,T=t)\alpha_s\beta_t\leq r\sqrt{\sum_{s:j\in s}P(S=s)\alpha_s^2}\sqrt{\sum_{t:j\in t}P(T=t)\beta_t^2}, \quad \forall \text{ real } \alpha_s, \beta_t.
\end{equation}
In particular, when $S$ and $T$ are independent, we have that $R(S,T)=0$ and that
\[r_{j}=\sqrt{P(j\in S)P(j\in T)}.\]
\end{theorem}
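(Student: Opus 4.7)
The plan is to prove the identity by matching upper and lower bounds. Since $\partial\notin F$, the set $S=\{i:Y_i\neq\partial\}$ is recovered from $Y$, so every measurable $\varphi(Y)$ is of the form $\varphi(Y)=\varphi_S(X_S)$ for some family $\{\varphi_s:F^s\to\mathbb{R}\}_{s\subseteq[n]}$, and analogously $\psi(Z)=\psi_T(X_T)$. For the lower bound I exhibit two families of test functions. Taking $\varphi=f(S),\psi=g(T)$ for centered square-integrable $f,g$ gives $R(Y,Z)\ge R(S,T)$. For each $j\in[n]$, picking any $f:F\to\mathbb{R}$ with $E[f(X_j)]=0$ and $E[f(X_j)^2]=1$, the choices $\varphi(Y)=f(X_j)\alpha(S)\mathbf{1}_{j\in S}$ and $\psi(Z)=f(X_j)\beta(T)\mathbf{1}_{j\in T}$ produce, using $(S,T)\perp X$, a correlation ratio that matches exactly the ratio in the definition of $r_j$; taking the supremum over $\alpha,\beta$ yields $R(Y,Z)\ge r_j$, so altogether $R(Y,Z)\ge\max(R(S,T),\max_j r_j)$.

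For the upper bound, the main tool is the Hoeffding (ANOVA) decomposition $L^2(X_1,\ldots,X_n)=\bigoplus_{A\subseteq[n]}H_A$, where $H_A=\{h:E[h\mid X_B]=0\text{ for every }B\not\supseteq A\}$. For each $s$, I write $\varphi_s(X_s)=\sum_{A\subseteq s}h_A^{(s)}(X_A)$ with $h_A^{(s)}\in H_A$, and similarly $\psi_t(X_t)=\sum_{A\subseteq t}k_A^{(t)}(X_A)$. Setting $g_A(X_A,S):=\sum_{s\supseteq A}\mathbf{1}_{S=s}h_A^{(s)}(X_A)$ and $\ell_A(X_A,T)$ analogously, I get $\varphi(Y)=\sum_A g_A$ and $\psi(Z)=\sum_A \ell_A$. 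Since $(S,T)\perp X$, conditioning on $(S,T)$ reduces both the Parseval identity $\sum_A E[g_A^2]=E[\varphi^2]=1$ (and similarly for $\ell_A$) and the cross-orthogonality $E[g_A(X_A,S)\ell_B(X_B,T)]=0$ for $A\neq B$ to the fixed-fiber orthogonality $H_A\perp H_B$; thus $E[\varphi\psi]=\sum_A E[g_A\ell_A]$.

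To finish, I bound each diagonal term. The $A=\emptyset$ piece equals $E[\mu(S)\nu(T)]$ with centered $\mu(s)=E[\varphi_s(X_s)],\nu(t)=E[\psi_t(X_t)]$, and is thus at most $R(S,T)\sqrt{E[g_\emptyset^2]E[\ell_\emptyset^2]}$. For $A\neq\emptyset$, I pick any $j\in A$ and condition on $X_A=x_A$: the conditional expectation becomes a bilinear form $\sum_{s,t\supseteq A}P(S=s,T=t)\alpha_s\beta_t$ in $\alpha_s=h_A^{(s)}(x_A)$ and $\beta_t=k_A^{(t)}(x_A)$, supported on $\{s,t\ni j\}$, so the defining inequality of $r_j$ gives a bound of $r_j\sqrt{E[g_A(x_A,S)^2]E[\ell_A(x_A,T)^2]}$; integrating over $X_A$ via Cauchy--Schwarz yields $|E[g_A\ell_A]|\le \max_j r_j\sqrt{E[g_A^2]E[\ell_A^2]}$. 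A final Cauchy--Schwarz over $A$, combined with $\sum_A E[g_A^2]=\sum_A E[\ell_A^2]=1$, produces $|E[\varphi\psi]|\le\max(R(S,T),\max_j r_j)$. The $S\perp T$ special case then follows because $R(S,T)=0$ and the bilinear form in the definition of $r_j$ factors, so Cauchy--Schwarz directly evaluates $r_j=\sqrt{P(j\in S)P(j\in T)}$. I expect the main obstacle to be verifying the two orthogonality identities in the presence of the random indexing $(S,T)$; the independence $(S,T)\perp X$ should let me reduce them cleanly to the fiberwise Hoeffding orthogonality.
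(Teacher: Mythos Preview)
Your proposal is correct and follows essentially the same approach as the paper: both use the Hoeffding/ANOVA decomposition for the upper bound (splitting the $A=\emptyset$ piece controlled by $R(S,T)$ from the $A\neq\emptyset$ pieces controlled by the $r_j$), and both use the same test functions $f(S),g(T)$ and $h(X_j)\alpha_S\mathbf{1}_{j\in S}$ for the lower bound. The only cosmetic difference is that the paper first applies Cauchy--Schwarz to $E[\varphi_{s,u}\psi_{t,u}]$ and then bounds the resulting bilinear form via an intermediate constant $r_u$ (later reduced to $r_{\{j\}}$ by monotonicity in $u$), whereas you condition on $X_A=x_A$ and apply the defining inequality of $r_j$ pointwise before integrating; both routes yield the same estimate.
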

Here, the non-degeneracy of $X_i$ is crucial for the lower bound
\[R((Y_1,Y_2,\ldots,Y_n),(Z_1,Z_2,\ldots,Z_n))\geq \max(r_j:j\in[n]).\]
According to Theorem~\ref{thm: R (S,X_S) (T,X_T)}, in order to calculate $R((Y_1,Y_2,\ldots,Y_n),(Z_1,Z_2,\ldots,Z_n))$, we need to know $R(S,T)$. When $S$ and $T$ depend on each other, we have no good control over $R(S,T)$. However, we have a partial answer in the following special case:
\begin{theorem}\label{thm: R S T}
Let $T$ be a uniform subset of $[n]=\{1,2,\ldots,n\}$ of size $m$. Given $T$, let $S$ be a uniform subset of $T$ with size $k\leq m$. Then we have that
\[R(S,T)=\sqrt{\frac{k(n-m)}{m(n-k)}},\]
with the convention that $0/0=0$.
\end{theorem}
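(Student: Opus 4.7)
The plan is to compute $R(S,T)^2$ as the top non-trivial eigenvalue of $K^{\ast}K$, where $K:L^2(\binom{[n]}{k})\to L^2(\binom{[n]}{m})$ denotes the conditional expectation operator $(Kf)(t)=E[f(S)\mid T=t]$, and to identify this eigenvalue explicitly by exploiting the $S_n$-symmetry of the joint law of $(S,T)$.

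For the lower bound, the test functions $\varphi(S)=\mathbf{1}\{1\in S\}$ and $\psi(T)=\mathbf{1}\{1\in T\}$ suffice: the inclusion $S\subseteq T$ gives $P(1\in S,1\in T)=P(1\in S)=k/n$, and together with $\Var(\varphi(S))=k(n-k)/n^{2}$ and $\Var(\psi(T))=m(n-m)/n^{2}$ this yields $\rho(\varphi(S),\psi(T))=\sqrt{k(n-m)/(m(n-k))}$.

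For the matching upper bound, the joint law of $(S,T)$ is invariant under the diagonal $S_n$-action on $[n]$, so $K^{\ast}K$ commutes with the induced $S_n$-action on $L^2(\binom{[n]}{k})$. By the Johnson-scheme decomposition $L^2(\binom{[n]}{k})=\bigoplus_{j=0}^{\min(k,n-k)}V_j$ into pairwise non-isomorphic irreducible $S_n$-modules, Schur's lemma forces $K^{\ast}K$ to act as a scalar $\lambda_j$ on each $V_j$, so $R(S,T)^2=\max_{j\geq 1}\lambda_j$. To compute $\lambda_j$, I apply $K^{\ast}K$ to $f_A(s)=\mathbf{1}\{A\subseteq s\}$ with $|A|=j$:
\[(Kf_A)(t)=\frac{\binom{k}{j}}{\binom{m}{j}}\mathbf{1}\{A\subseteq t\},\qquad(K^{\ast}Kf_A)(s)=\frac{\binom{k}{j}}{\binom{m}{j}}\cdot\frac{\binom{m-k}{|A\setminus s|}}{\binom{n-k}{|A\setminus s|}}.\]
Expanding the right side in the basis $\{\mathbf{1}\{B\subseteq s\}:B\subseteq A\}$ by inclusion-exclusion, the coefficient of $\mathbf{1}\{A\subseteq s\}$ equals the $j$-th finite difference $\sum_{i=0}^{j}(-1)^{i}\binom{j}{i}\binom{m-k}{i}/\binom{n-k}{i}$, which a Vandermonde-Chu type identity simplifies to $\binom{n-m}{j}/\binom{n-k}{j}$. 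Since the lower-degree terms all lie in $V_0\oplus\cdots\oplus V_{j-1}$ and $f_A$ has a non-zero component in $V_j$ whenever $j\leq\min(k,n-k)$, this coefficient must equal $\lambda_j$, giving
\[\lambda_j=\frac{\binom{k}{j}\binom{n-m}{j}}{\binom{m}{j}\binom{n-k}{j}}.\]

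Finally, the ratio $\lambda_{j+1}/\lambda_j=(k-j)(n-m-j)/((m-j)(n-k-j))$ is $\leq 1$ iff $(m-k)(n-2j)\geq 0$, which holds since $k\leq m$ and $j\leq\min(k,n-k)\leq n/2$. Hence $\max_{j\geq 1}\lambda_j=\lambda_1=k(n-m)/(m(n-k))$ and $R(S,T)=\sqrt{\lambda_1}$ as claimed. The main technical step is the explicit identification of $\lambda_j$ via the Vandermonde-Chu identity; the lower bound and the monotonicity of $(\lambda_j)$ are both straightforward.
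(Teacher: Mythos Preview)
Your proof is correct and takes a genuinely different route from the paper's. The paper proceeds by induction on $n$: it first proves the special case $m=n-1$ via a direct eigenvalue computation (their Lemma~\ref{lem: R S T n-1}), and then for general $m$ introduces an auxiliary uniform set $U\supseteq T$ of size $n-1$, conditions on $U$, and combines the inductive hypothesis for $R(S,T\mid U)$ with the lemma for $R(S,U)$. By contrast, you diagonalise $K^{\ast}K$ in one stroke using the Johnson-scheme decomposition and Schur's lemma, obtaining the entire spectrum $\lambda_j=\binom{k}{j}\binom{n-m}{j}/\bigl(\binom{m}{j}\binom{n-k}{j}\bigr)$, not just the top nontrivial eigenvalue. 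What the paper's argument buys is that it is entirely elementary---no representation theory, no combinatorial identity beyond the $2\times 2$ eigenvalue computation in the $m=n-1$ case. What your argument buys is a cleaner, non-inductive proof that simultaneously explains \emph{why} the formula has the shape it does and yields the full list of singular values of the conditional-expectation operator; it also makes the lower bound transparent, since $\mathbf{1}\{1\in S\}$ and $\mathbf{1}\{1\in T\}$ are (up to centering) vectors in the $V_1$-isotypic components. Both approaches use the same test functions for the lower bound.
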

As a corollary, we extend the Dembo-Kagan-Shepp-Yu inequality:
\begin{corollary}[Dembo-Kagan-Shepp-Yu]\label{cor: generalization of Yu}
Let us consider i.i.d. non-degenerate random variables $X_1,X_2,\ldots,X_n$ taking values in a general measurable space $(F,\mathcal{F})$. Then for $1\leq \ell+1\leq m\leq n$, we have that
\begin{equation}\label{eq: generalization of Yu inequality}
R\left(\sum_{i=1}^{m}\delta_{X_i},\sum_{j=\ell+1}^{n}\delta_{X_j}\right)=\frac{m-\ell}{\sqrt{m(n-\ell)}},
\end{equation}
where $\delta_{x}$ is the Dirac measure at $x$.
\end{corollary}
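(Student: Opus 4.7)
The plan is to prove the two inequalities separately: the lower bound via an explicit symmetric linear functional of the $X_i$, and the upper bound via the Hoeffding--ANOVA decomposition of symmetric $L^2$ functionals of an i.i.d.\ sequence.

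For the lower bound I will choose any non-constant centered $f \in L^2(X_1)$, whose existence follows from the non-degeneracy of $X_1$, and set
\[
\varphi(M_1) = \sum_{i=1}^{m} f(X_i), \qquad \psi(M_2) = \sum_{j=\ell+1}^{n} f(X_j),
\]
where $M_1 = \sum_{i=1}^{m}\delta_{X_i}$ and $M_2 = \sum_{j=\ell+1}^{n}\delta_{X_j}$. Each is a symmetric function of its arguments, hence measurable with respect to the corresponding multiset. Independence and centering yield $\Var(\varphi) = m\,\Var(f(X_1))$, $\Var(\psi) = (n-\ell)\,\Var(f(X_1))$, and $\Cov(\varphi,\psi) = (m-\ell)\,\Var(f(X_1))$, the last because only the $m-\ell$ indices in the overlap $\{\ell+1,\ldots,m\}$ contribute. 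This produces $\rho(\varphi,\psi) = (m-\ell)/\sqrt{m(n-\ell)}$ and establishes the lower bound.

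For the upper bound I will invoke the Hoeffding decomposition: every symmetric $\varphi \in L^2(X_1,\ldots,X_m)$ admits an orthogonal decomposition
\[
\varphi = \sum_{k=0}^{m} V_k, \qquad V_k = \sum_{\substack{S\subset[m]\\|S|=k}} \varphi_k\bigl((X_i)_{i\in S}\bigr),
\]
with $\varphi_k$ a symmetric canonical kernel (integrating out any single argument yields zero) and $\Var(V_k) = \binom{m}{k}E[\varphi_k^2]$. Analogously, $\psi = \sum_k W_k$ uses canonical kernels $\psi_k$ supported on subsets of $\{\ell+1,\ldots,n\}$. Conditioning on the $X_i$ outside the common index block and using canonicity forces $E[\varphi_k(X_S)\psi_{k'}(X_T)] = 0$ unless $S = T \subset \{\ell+1,\ldots,m\}$, so $E[V_k W_{k'}] = \delta_{k,k'}\binom{m-\ell}{k}E[\varphi_k\psi_k]$. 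Two applications of Cauchy--Schwarz, first term-by-term and then to the sum over $k$, give
\[
|\Cov(\varphi,\psi)| \;\le\; \max_{k\ge 1} c_k \cdot \sqrt{\Var(\varphi)\Var(\psi)}, \qquad c_k = \frac{\binom{m-\ell}{k}}{\sqrt{\binom{m}{k}\binom{n-\ell}{k}}}.
\]

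The final step is the combinatorial verification that $c_k \le c_1 = (m-\ell)/\sqrt{m(n-\ell)}$ for every $k\ge 1$, which matches the lower bound and closes the proof. Writing
\[
\frac{c_k^2}{c_1^2} = \prod_{j=1}^{k-1} \frac{(m-\ell-j)^2}{(m-j)(n-\ell-j)}
\]
and setting $a = m-\ell-j \ge 0$, one has $(m-j)(n-\ell-j) = (a+\ell)(a + n - m) \ge a^2$, so every factor is at most $1$. The main obstacle will be the careful justification of the cross-term vanishing $E[V_k W_{k'}] = 0$ for $k \ne k'$; this rests on the canonicity of the Hoeffding kernels combined with independence of the $X_i$ outside the overlap $\{\ell+1,\ldots,m\}$, and everything else is routine bookkeeping.
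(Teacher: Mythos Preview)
Your argument is correct. The lower bound is the same as the paper's (both test with a linear statistic $\sum f(X_i)$), but your upper bound takes a genuinely different route. The paper randomizes the index sets: it introduces uniformly chosen subsets $S,T\subset[n]$ with $|S|=m$, $|T|=n-\ell$, $|S\cap T|=m-\ell$, observes that the two empirical measures are conditionally independent given $\sum_{i\in S\cap T}\delta_{X_i}$, and then invokes the submultiplicativity of maximal correlation together with its general Theorems~\ref{thm: R (S,X_S) (T,X_T)} and~\ref{thm: R S T} (via Lemma~\ref{lem: calculation of rj special}) to obtain the two factors $\sqrt{(m-\ell)/m}$ and $\sqrt{(m-\ell)/(n-\ell)}$. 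Your approach instead stays with the fixed index sets, expands both functionals in the Hoeffding/ANOVA decomposition for symmetric statistics, and reduces the problem to the elementary combinatorial inequality $\binom{m-\ell}{k}^2\le c_1^2\binom{m}{k}\binom{n-\ell}{k}$. Your method is self-contained and closer in spirit to Yu's original proof of \eqref{eq: Yu}, while the paper's route is designed to showcase that Corollary~\ref{cor: generalization of Yu} drops out of its more general random-subset machinery; the paper's method also yields the intermediate Lemma~\ref{lem: calculation of rj special} (a DKS-type bound for non-identically distributed $X_i$), which your direct symmetric-kernel argument does not see.
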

\begin{remark}
The original version of the Dembo-Kagan-Shepp-Yu inequality \eqref{eq: Yu ineq} is stated for real-valued random variables. We believe that a similar result also holds for random vectors by adapting their arguments. The upper bound
\[R\left(\sum_{i=1}^{m}\delta_{X_i},\sum_{j=\ell+1}^{n}\delta_{X_j}\right)\leq\frac{m-\ell}{\sqrt{m(n-\ell)}}\]
in Corollary~\ref{cor: generalization of Yu} is equivalent to the generalized version of DKSY inequality for random vectors. However, the generalized version for random vectors is not clearly written in the literature. So, we decide to state Corollary~\ref{cor: generalization of Yu} and provide a proof for self-containedness. In the present paper, Corollary~\ref{cor: generalization of Yu} is used to deduce Proposition~\ref{prop: Yu replace sum by min}.
\end{remark}
\begin{remark}
In general, the equation \eqref{eq: generalization of Yu inequality} does not hold for independent non-degenerate random variables $X_1$, $X_2$, $\ldots$, $X_n$ with different distributions. Indeed, for some particular distributions, there is a one-to-one correspondence between $\sum_{i=a}^{b}\delta_{X_i}$ and $(X_{a},X_{a+1},\ldots,X_b)$ for all $1\leq a\leq b\leq n$. In such cases, we have that
\[R\left(\sum_{i=1}^{m}\delta_{X_i},\sum_{j=\ell+1}^{n}\delta_{X_j}\right)\geq R(X_{\ell+1},X_{\ell+1})=1.\]
At present, we do not have a satisfactory extension of the Dembo-Kagan-Shepp-Yu inequality for random variables with different distributions. However, we have such an extension for the Dembo-Kagan-Shepp inequality, see Lemma~\ref{lem: calculation of rj special}.
\end{remark}
By applying Theorem~\ref{thm: R (S,X_S) (T,X_T)} and adapting the arguments in \cite{CourtadeMR3638565,MadimanBarronMR2319376}, we can get the following result in information theory:
\begin{theorem}\label{thm: applications in information theory}
Let $X_1,X_2,\ldots,X_n$ be independent real-valued continuous random variables. Let $S\subset T$ be two nested non-empty random subsets of $[n]=\{1,2,\ldots,n\}$, which are independent of $X=(X_1,X_2,\ldots,X_n)$. Let $R$ be the maximal correlation coefficient between $(S,X_S)$ and $(T,X_T)$ given by \eqref{eq: R Y Z}, where $X_{S}=(X_i)_{i\in S}$ and $X_{T}=(X_j)_{j\in T}$. Denote by $I(Z)=\int_{-\infty}^{\infty}\frac{(f'(z))^2}{f(z)}\,\mathrm{d}z$ the Fisher information of the continuous random variable $Z$ with the density $f(z)$. Then we have that
\begin{equation}\label{eq: Fisher information inequality}
\sum_{t\subset[n]}P(T=t)I(\sum_{j\in t}X_j)\mu_t^2\leq R^2\sum_{s\subset[n]}P(S=s)I(\sum_{i\in s}X_i)\lambda_s^2,
\end{equation}
where $\mu_t=\sum_{s\subset[n]}P(S=s|T=t)\lambda_s$ for $t\subset[n]$ and $(\lambda_s)_{s\subset[n]}$ is an arbitrary real vector.
\end{theorem}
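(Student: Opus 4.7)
The plan is to adapt the score-function / maximal-correlation strategy used by Madiman and Barron for the classical Fisher information inequality. Write $U_s := \sum_{i\in s} X_i$ for each non-empty $s \subset [n]$ and let $\rho_s := (\log f_{U_s})'$ be the score function of $U_s$, so that $E[\rho_s(U_s)]=0$ and $E[\rho_s(U_s)^2]=I(U_s)$ (we may assume all these quantities are finite, since otherwise \eqref{eq: Fisher information inequality} is trivial). Introduce the random variable
\[\Phi := \lambda_S\, \rho_S(U_S),\]
which is a mean-zero measurable function of $(S, X_S)$. Using the independence of $(S,T)$ from $X$,
\[E[\Phi^2] = \sum_{s\subset[n]} P(S=s)\,\lambda_s^2\, I(U_s),\]
which matches the right-hand side of \eqref{eq: Fisher information inequality} up to the factor $R^2$.

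The key ingredient is the classical score-convolution identity: for independent absolutely continuous $V_1,V_2$ one has $\rho_{V_1+V_2}(V_1+V_2) = E[\rho_{V_1}(V_1)\mid V_1+V_2]$. Applied conditionally on $\{S=s,T=t\}$, where $U_s$ and $U_{t\setminus s}$ are independent by $s\subset t$, this gives $E[\rho_s(U_s)\mid U_t, S=s, T=t] = \rho_t(U_t)$. Combined with the independence of $(S,T)$ from $X$ and the tower property, a direct computation yields
\[E[\Phi \mid T, U_T] = \Bigl(\sum_{s\subset T} P(S=s\mid T)\,\lambda_s\Bigr)\, \rho_T(U_T) = \mu_T\, \rho_T(U_T),\]
and consequently
\[E\bigl[(E[\Phi\mid T, U_T])^2\bigr] = \sum_{t\subset[n]} P(T=t)\,\mu_t^2\, I(U_t),\]
which is exactly the left-hand side of \eqref{eq: Fisher information inequality}.

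To close the argument, since $\sigma(T, U_T) \subset \sigma(T, X_T)$, Jensen's inequality for conditional expectations gives
\[E\bigl[(E[\Phi \mid T, U_T])^2\bigr] \le E\bigl[(E[\Phi \mid T, X_T])^2\bigr],\]
and the operator-norm characterization \eqref{eq: alt defn R} of the maximal correlation coefficient applied to the pair $((S,X_S),(T,X_T))$, together with $E[\Phi]=0$, yields
\[E\bigl[(E[\Phi \mid T, X_T])^2\bigr] \le R^2\, E[\Phi^2].\]
Chaining the three displays produces \eqref{eq: Fisher information inequality}. The principal obstacle is justifying the score-convolution identity in the present random-index setting; this reduces to the deterministic version by conditioning first on $(S,T)$ and then using the assumed independence of $(S,T)$ from $X$, while absolute continuity of each $X_i$ ensures the relevant densities and score functions exist.
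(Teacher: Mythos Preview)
Your proposal is correct and follows essentially the same approach as the paper: both define $\Phi=\lambda_S\rho_S(U_S)$, compute $E[\Phi^2]$ and $E[(E[\Phi\mid T,U_T])^2]$ via the score-convolution identity, and then bound the latter by $R^2 E[\Phi^2]$ using the operator-norm characterization \eqref{eq: alt defn R}. The only cosmetic difference is that you pass from $(T,U_T)$ to $(T,X_T)$ via Jensen's inequality before invoking \eqref{eq: alt defn R}, whereas the paper applies \eqref{eq: alt defn R} directly with $(T,U_T)$ and then uses $R((T,U_T),(S,X_S))\le R$; these are equivalent.
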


\emph{Organization of the paper}: In Section~\ref{sect: preliminaries}, we present several useful properties of maximal correlation coefficients. The proofs of Lemma~\ref{lem: submultiplicative} and Lemma~\ref{lem: lower semi-continuity} are postponed to the appendix. In Section~\ref{sect: random walks}, we prove Theorem~\ref{thm: random walks} and give a new proof of Lancaster's classical result \eqref{eq: jointly Gaussian}. In Section~\ref{sect: Levy and stable}, we first prove Theorem~\ref{thm: Levy}, and then use Theorem~\ref{thm: Levy} to prove Theorem~\ref{thm: stable}. We also give several examples in this section. In Section~\ref{sect: MB like}, we prove the lower bounds of Theorem~\ref{thm: R X (T,X_T)}, Theorem~\ref{thm: R (S,X_S) (T,X_T)}, Theorem~\ref{thm: R S T}, Corollary~\ref{cor: generalization of Yu} and Theorem~\ref{thm: applications in information theory} in separate subsections.
We also give a Dembo-Kagan-Shepp-Yu-type upper bound in Proposition~\ref{prop: Yu replace sum by min}, where the summation in \eqref{eq: Yu ineq} is replaced by the minimum in \eqref{eq: Yu replace sum by min}.
Finally, we present two open problems in Section~\ref{sect: open problems}.

\section{Preliminaries}\label{sect: preliminaries}

\subsection{Basic properties}
We collect several basic properties as follows:
\begin{enumerate}
\item $R(X,Y)\in[0,1]$.
\item $R(X,Y)=0$ if and only if $X$ and $Y$ are independent.
\item $R(X,Y)=R(Y,X)$.
\item In general, the supremum in the definition of the maximal correlation coefficient cannot be replaced by the maximum. For example, suppose that $M$ and $N$ are independent Poisson random variables of parameter $1$. Let $X=M-N$ and $Y=M-2N$. Then we have that $R(X,Y)=1$. Indeed, $\lim_{m\to+\infty}\rho(1_{X=m},1_{Y=m})=1$. However, for any measurable functions $\varphi$ and $\psi$ satisfying $E[\varphi(X)]=E[\psi(Y)]=0$ and $\Var(\varphi(X))=\Var(\psi(Y))=1$, we have that $\rho(\varphi(X),\psi(Y))<1$. R\'{e}nyi \cite[Theorems 1 and 2]{RenyiMR0115203} established sufficient conditions for the supremum to be attained.
\item If $U=\varphi(X)$ is a measurable function of $X$ and $V=\psi(Y)$ is a measurable function of $Y$, then $R(U,V)\leq R(X,Y)$.
\end{enumerate}

\subsection{Submultiplicative property}

\begin{lemma}[Lemma~2.1 of \cite{YuMR2422962}]\label{lem: submultiplicative}
Suppose that $X$ and $Z$ are conditionally independent given $Y$. Then
\[R(X,Z)\leq R(X,Y)R(Y,Z).\]
\end{lemma}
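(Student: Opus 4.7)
The plan is to exploit the operator-theoretic characterization of the maximal correlation coefficient recalled in equation \eqref{eq: alt defn R}: for any pair $(U,V)$, $R(U,V)$ equals the operator norm of the conditional expectation map $E[\cdot\,|\,V]\colon L^2_0(U)\to L^2_0(V)$. So the three quantities in the inequality are operator norms of three conditional-expectation maps, and the target inequality is really a statement about the norm of a composition being controlled by the product of norms. The whole proof then reduces to factoring one map through another using the conditional independence hypothesis.

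First I would fix an arbitrary $\varphi(X)\in L^2_0(X)$ and analyze $E[\varphi(X)\,|\,Z]$. By the tower property,
\[
E[\varphi(X)\,|\,Z]=E\bigl[E[\varphi(X)\,|\,Y,Z]\,\bigm|\,Z\bigr].
\]
The key step, where conditional independence enters, is the identity $E[\varphi(X)\,|\,Y,Z]=E[\varphi(X)\,|\,Y]$, which is a standard consequence of $X\perp Z\,|\,Y$. Substituting back gives
\[
E[\varphi(X)\,|\,Z]=E\bigl[E[\varphi(X)\,|\,Y]\,\bigm|\,Z\bigr],
\]
which exhibits the operator $T_{XZ}\colon L^2_0(X)\to L^2_0(Z)$ defined by $\varphi(X)\mapsto E[\varphi(X)\,|\,Z]$ as the composition $T_{YZ}\circ T_{XY}$, where $T_{XY}\colon L^2_0(X)\to L^2_0(Y)$ and $T_{YZ}\colon L^2_0(Y)\to L^2_0(Z)$ are the corresponding conditional-expectation maps. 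Here one should note in passing that $E[\varphi(X)\,|\,Y]$ is indeed mean zero and square-integrable, so it lives in $L^2_0(Y)$ and can legitimately be fed into $T_{YZ}$.

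Finally, applying submultiplicativity of the operator norm to the composition yields
\[
R(X,Z)=\|T_{XZ}\|=\|T_{YZ}\circ T_{XY}\|\leq \|T_{YZ}\|\,\|T_{XY}\|=R(Y,Z)\,R(X,Y),
\]
which is the desired bound. The only genuinely substantive ingredient is the conditional-independence identity $E[\varphi(X)\,|\,Y,Z]=E[\varphi(X)\,|\,Y]$; everything else is a bookkeeping exercise with the Hilbert-space formulation. I do not anticipate any real obstacle, since the operator-norm factorization is the natural home of the statement, and the conditional independence hypothesis is precisely what is needed to carry out the factorization.
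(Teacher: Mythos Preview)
Your proof is correct and is essentially the same as the paper's: both factor the conditional-expectation operator $L^2_0(X)\to L^2_0(Z)$ as the composition $L^2_0(X)\to L^2_0(Y)\to L^2_0(Z)$ using $E[\varphi(X)\,|\,Y,Z]=E[\varphi(X)\,|\,Y]$, and then invoke submultiplicativity of operator norms. The paper merely packages the factorization as a commutative diagram of projections, but the content is identical to what you wrote.
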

This is the first part of \cite[Lemma~2.1]{YuMR2422962}. For self-containedness, we provide a short proof in Appendix~\ref{appendix: A}.

\subsection{Cs\'{a}ki-Fischer identity}

The following result is known as the Cs\'{a}ki-Fischer identity \cite[Theorem~6.2]{CsakiFischerMR0166833}, see also \cite[Theorem~1]{WitsenhausenMR363678}.
\begin{theorem}[Cs\'{a}ki-Fischer identity]\label{thm: Csaki-Fisher}
Suppose that $(X_1,Y_1)$ and $(X_2,Y_2)$ are independent. Then we have that
\begin{equation*}
    R((X_1,X_2),(Y_1,Y_2)) = \max(R(X_1,Y_1),R(X_2,Y_2)).
\end{equation*}
\end{theorem}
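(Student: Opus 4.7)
The plan is to use the operator-norm characterization \eqref{eq: alt defn R} together with the Hilbert tensor-product structure of $L^2$ forced by independence. The lower bound $R((X_1,X_2),(Y_1,Y_2))\ge\max(R(X_1,Y_1),R(X_2,Y_2))$ is immediate from property~(5) in Section~\ref{sect: preliminaries}, since $X_i$ is a measurable function of $(X_1,X_2)$ and $Y_i$ of $(Y_1,Y_2)$. The real work is the reverse direction.

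For the upper bound, my first step is to orthogonally decompose $L^2_0((X_1,X_2))$ using the independence of $X_1$ and $X_2$ (which is forced by the hypothesis). Every $f\in L^2_0((X_1,X_2))$ can be written as
\[
f=f_1(X_1)+f_2(X_2)+g(X_1,X_2),
\]
where $f_i(X_i)=E[f\mid X_i]\in L^2_0(X_i)$ and $g$ satisfies $E[g\mid X_1]=E[g\mid X_2]=0$. Expanding in orthonormal bases of $L^2_0(X_1)$ and $L^2_0(X_2)$ displays the last summand as an element of the Hilbert tensor product $L^2_0(X_1)\otimes L^2_0(X_2)$, and independence of $X_1,X_2$ makes the three summands mutually orthogonal in $L^2$.

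The second step is to apply $E[\cdot\mid Y_1,Y_2]$ termwise and exploit $(X_1,Y_1)\perp(X_2,Y_2)$. A short calculation on product test functions gives $E[f_i(X_i)\mid Y_1,Y_2]=E[f_i(X_i)\mid Y_i]$ and $E[\phi(X_1)\psi(X_2)\mid Y_1,Y_2]=E[\phi(X_1)\mid Y_1]\,E[\psi(X_2)\mid Y_2]$ for mean-zero $\phi,\psi$. One then checks that the three images remain mutually orthogonal in $L^2_0((Y_1,Y_2))$: the first two lie in $L^2_0(Y_1)$ and $L^2_0(Y_2)$ (orthogonal because $Y_1\perp Y_2$), while the third projects to zero on each $\sigma(Y_i)$ because $\psi(X_{3-i})\perp Y_i$ forces $E[g\mid Y_i]=0$.

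The final step is to bound each orthogonal piece. The first two have norms at most $R(X_i,Y_i)\|f_i\|$ by the very definition of $R$. The third, being the image of a tensor-product operator on $L^2_0(X_1)\otimes L^2_0(X_2)$ whose target factors lie in independent subspaces of $L^2_0((Y_1,Y_2))$, has norm at most $R(X_1,Y_1)R(X_2,Y_2)\|g\|$. Since each $R(X_i,Y_i)\in[0,1]$, this cross term is dominated by $\max(R(X_1,Y_1),R(X_2,Y_2))\|g\|$, and summing the three orthogonal contributions yields
\[
\|E[f\mid Y_1,Y_2]\|^2\le\max(R(X_1,Y_1)^2,R(X_2,Y_2)^2)\,\|f\|^2.
\]
The step I expect to demand the most care is the tensor-product bound on the ``cross'' piece: one must verify that the operator sending $\phi\otimes\psi$ to $E[\phi\mid Y_1]\otimes E[\psi\mid Y_2]$ really has norm equal to the product of the two factor norms on the Hilbert tensor product, which rests on the orthogonality induced by $Y_1\perp Y_2$.
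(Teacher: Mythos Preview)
Your proof is correct but follows a genuinely different route from the paper's. The paper works directly with covariances: it conditions on $\mathcal{F}=\sigma(X_1,Y_1)$ and splits $\Cov(f(X_1,X_2),g(Y_1,Y_2))$ into the expected conditional covariance $I_1$ (controlled by $R(X_2,Y_2)$ via the conditional definition of maximal correlation) plus the covariance $I_2$ of the conditional expectations (controlled by $R(X_1,Y_1)$, since $E[f\mid\mathcal{F}]$ and $E[g\mid\mathcal{F}]$ are functions of $X_1$ and $Y_1$ alone), then combines via Cauchy--Schwarz. You instead use the operator-norm formulation \eqref{eq: alt defn R} and a three-piece orthogonal decomposition $L^2_0((X_1,X_2))=L^2_0(X_1)\oplus L^2_0(X_2)\oplus\bigl(L^2_0(X_1)\otimes L^2_0(X_2)\bigr)$, handling the cross piece by the standard Hilbert-tensor fact $\|T_1\otimes T_2\|=\|T_1\|\,\|T_2\|$. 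The paper's argument is more elementary---no tensor-product machinery, just conditional variance/covariance identities---while yours is more structural and, pleasantly, is exactly the $n=2$ instance of the ANOVA decomposition $(\mathcal{H}_t)_{t\subset[n]}$ that the paper later invokes in Section~\ref{sect: MB like}. Your identification of the tensor-norm step as the delicate one is apt; once that is granted, the rest is clean.
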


We provide a simple probabilistic proof for self-containedness.

\begin{proof}
Let $\mathcal{F}=\sigma(X_1,Y_1)$. Note that
\begin{align*}
\mathrm{Cov}(f(X_1,X_2),g(Y_1,Y_2))&=E[\mathrm{Cov}(f(X_1,X_2),g(Y_1,Y_2)|\mathcal{F})]\\
&\quad+\mathrm{Cov}(E[f(X_1,X_2)|\mathcal{F}],E[g(Y_1,Y_2)|\mathcal{F}]),
\end{align*}
where the conditional covariance $\mathrm{Cov}(U,V|\mathcal{F})$ of two random variables $U,V$ given the sigma-field $\mathcal{F}$ is defined by
\begin{equation*}
\mathrm{Cov}(U,V|\mathcal{F})=E[UV|\mathcal{F}]-E[U|\mathcal{F}]E[V|\mathcal{F}].
\end{equation*}
To simplify the notation, let
\begin{equation*}
I_1=E[\mathrm{Cov}(f(X_1,X_2),g(Y_1,Y_2)|\mathcal{F})], I_2=\mathrm{Cov}(E[f(X_1,X_2)|\mathcal{F}],E[g(Y_1,Y_2)|\mathcal{F}]).
\end{equation*}
Due to the conditional independence of $X_2$ and $Y_1$ given $X_1$, $E[f(X_1,X_2)|\mathcal{F}]$ is a function of $X_1$. Similarly, $E[g(Y_1,Y_2)|\mathcal{F}]$ is a function of $Y_1$. By the definition of maximal correlation coefficients, we have that
\begin{equation*}
|I_2|\leq R(X_1,Y_1)\sqrt{\operatorname{Var}(E[f(X_1,X_2)|\mathcal{F}])\operatorname{Var}(E[g(Y_1,Y_2)|\mathcal{F}])}.
\end{equation*}
Next, we look for the upper bound of $I_1$. By the definition of maximal correlation coefficients and independence between $(X_1,Y_1)$ and $(X_2,Y_2)$, we have that
\begin{equation*}
|\operatorname{Cov}(f(X_1,X_2),g(Y_1,Y_2)|\mathcal{F})|\leq R(X_2,Y_2)\sqrt{\operatorname{Var}(f(X_1,X_2)|\mathcal{F})\operatorname{Var}(g(Y_1,Y_2)|\mathcal{F})}.
\end{equation*}
By taking the expectation on both sides and using the Cauchy-Schwarz inequality, we obtain that
\[\begin{aligned}
|I_1|&\leq E\left[R(X_2,Y_2)\sqrt{\operatorname{Var}(f(X_1,X_2)|\mathcal{F})\operatorname{Var}(g(Y_1,Y_2)|\mathcal{F})} \right]\\
&\leq R(X_2,Y_2)\sqrt{E[\operatorname{Var}(f(X_1,X_2)|\mathcal{F})]E[\operatorname{Var}(g(Y_1,Y_2)|\mathcal{F})]}.
\end{aligned}\]
For simplicity of notation, let
\begin{equation*}
    A_{f}^{X}=E[\operatorname{Var}(f(X_1,X_2)|\mathcal{F})],
    A_{g}^{Y}=E[\operatorname{Var}(g(Y_1,Y_2)|\mathcal{F})],
\end{equation*}
\begin{equation*}
    B_{f}^{X}=\operatorname{Var}(E[f(X_1,X_2)|\mathcal{F}]),
    B_{g}^{Y}=\operatorname{Var}(E[g(Y_1,Y_2)|\mathcal{F}]).
\end{equation*}
Note that $A_{f}^{X}+B_{f}^{X}=\operatorname{Var}(f(X_1,X_2))$, $A_{g}^{Y}+B_{g}^{Y}=\operatorname{Var}(g(Y_1,Y_2))$. Then we have that
\[\begin{aligned}
|\operatorname{Cov}&(f(X_1,X_2),g(Y_1,Y_2))|\\
&\leq|I_{1}|+|I_{2}|\\
&\leq R(X_1,Y_1)\sqrt{B_{f}^{X}B_{g}^{Y}}+R (X_2,Y_2)\sqrt{A_{f}^{X}A_{g}^{Y}}\\
&\leq\max(R(X_1,Y_1),R(X_2,Y_2))(\sqrt{A_{f}^{X}A_{g}^{Y}}+\sqrt{B_{f}^{X}B_{g}^{Y}}) \\
&\leq\max(R(X_1,Y_1),R (X_2,Y_2))\sqrt{\sqrt{A_{f}^{X}}^{2}+\sqrt{B_{f}^{X}}^{2}}\sqrt{\sqrt{A_{g}^{Y}}^{2}+\sqrt{B_{g}^{Y}}^{2}} \\
&=\max(R(X_1,Y_1),R(X_2,Y_2))\sqrt{\mathrm{Var}(f(X_1,X_2))\mathrm{Var}(g(Y_1,Y_2))}.
\end{aligned}\]
Hence, $R(( X_1,X_2 ),(Y_1,Y_2))\leq\max(R(X_1,Y_1),R(X_2,Y_2))$.

On the other hand, we have that
\[R(( X_1,X_2 ),(Y_1,Y_2))\ge\max(R(X_1,Y_1),R(X_2,Y_2)).\]
\end{proof}
For the Cs\'{a}ki-Fisher identity, the independence between the random vectors $(X_1,Y_1)$ and $(X_2,Y_2)$ is necessary. We will show this by the following example:
\begin{example}
Let $X_1,Y_1$ and $\sigma$ be independent random variables with the same distribution such that $P(\sigma=-1)=P(\sigma=1)=1/2$. Since $X_1$ is independent of $Y_1$, we have $R(X_1,Y_1)=0$. Define
\[X_2=\sigma Y_1,Y_2=\sigma X_1.\]
Then the joint distribution of $X_2,Y_2$ is given in Table~\ref{table: joint distribution of X2 Y2}.
\begin{table}
    \centering
    \begin{tabular}{|c|c|c|}
    \hline
        $X_2\backslash Y_2$ & $-1$ & $1$ \\ \hline
        $-1$ & $1/4$ & $1/4$ \\ \hline
        $1$ & $1/4$ & $1/4$ \\ \hline
    \end{tabular}
\caption{Joint distribution of $(X_2,Y_2)$}
\label{table: joint distribution of X2 Y2}
\end{table}

Immediately, we see that $X_2$ is independent of $Y_2$. Hence, we have that $R(X_2,Y_2)=0$. Note that $X_1X_2=Y_1Y_2=\sigma X_1Y_1$, and they are non-degenerate. Consequently, we have that
\[R((X_1,X_2),(Y_1,Y_2 ))\geq R(X_1X_2,Y_1Y_2)=1>0=\max(R(X_1,Y_1),R(X_2,Y_2)).\]
\end{example}

\subsection{Lower semi-continuity}

The maximal correlation coefficient $R(X,Y)$ is fully determined by the joint distribution $\mu$ of $(X,Y)$. Therefore, for a joint distribution $\mu$ of $(X,Y)$, its maximal correlation coefficient $R(\mu)$ is well-defined by $R(\mu)=R(X,Y)$. Suppose that $(X,Y)$ takes values in a product space $S=\mathcal{X}\times\mathcal{Y}$ of Polish spaces $\mathcal{X}$ and $\mathcal{Y}$ with the Borel $\sigma$-field $\mathcal{S}$. Let $\mathcal{P}$ be the space of probability measures on $(S,\mathcal{S})$. We equip $\mathcal{P}$ with the L\'{e}vy-Prokhorov metric $\pi$, see \cite[Eq.~(6.10)]{BillingsleyMR1700749} for the definition of this metric. By \cite[Theorem~6.8]{BillingsleyMR1700749}, $(\mathcal{P},\pi)$ is Polish, and weak convergence is equivalent to convergence with respect to the metric $\pi$. In this section, we will show the lower semicontinuity of $\mu\mapsto R(\mu)$ with respect to the metric $\pi$. It is a generalization of \cite[Theorem~1]{BrycDemboKaganMR2141340}.
\begin{lemma}\label{lem: lower semi-continuity}
Suppose that $(X,Y)$ takes values in a product space $S=\mathcal{X}\times\mathcal{Y}$ of Polish spaces $\mathcal{X}$ and $\mathcal{Y}$ with the Borel $\sigma$-field $\mathcal{S}$ and let $\mu$ be the joint distribution of $(X,Y)$. As a function on the Polish space $(\mathcal{P},\pi)$, $\mu\mapsto R(\mu)=R(X,Y)$ is lower semicontinuous.
\end{lemma}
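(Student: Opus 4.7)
The statement to prove is that $\mu_n\to\mu$ weakly in $(\mathcal{P},\pi)$ implies $\liminf_n R(\mu_n)\ge R(\mu)$. If some marginal of $\mu$ is degenerate then $R(\mu)=0$ by convention and the inequality is trivial, so I will assume both marginals of $\mu$ are non-degenerate. My plan is to reduce the supremum in the definition \eqref{eq: defn R} of $R(\mu)$ to test functions that are bounded and continuous, and then to exploit the fact that integrals of bounded continuous functions are continuous in the weak topology.

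For the reduction step, I will show
\[
R(\mu)=\sup\{\rho_\mu(\varphi(X),\psi(Y)):\varphi,\psi\text{ bounded continuous with }\Var_\mu(\varphi(X))\Var_\mu(\psi(Y))>0\}.
\]
The key analytic input is that for any finite Borel measure on a Polish space, the bounded continuous functions are dense in $L^2$; this follows because every such measure is Radon and one may invoke Lusin's theorem combined with truncation. Given an admissible measurable pair $(\varphi_0,\psi_0)$ with unit second moments and zero means, I would pick bounded continuous $\varphi_k\to\varphi_0$ in $L^2(\mu_X)$ and $\psi_k\to\psi_0$ in $L^2(\mu_Y)$. By Cauchy-Schwarz the mixed expectation $E_\mu[\varphi_k(X)\psi_k(Y)]$ converges to $E_\mu[\varphi_0(X)\psi_0(Y)]$, as do the first two moments of $\varphi_k(X)$ and $\psi_k(Y)$. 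After centering and renormalizing (legitimate once $\Var_\mu(\varphi_k(X))$ is close to $1$), the Pearson correlations $\rho_\mu(\varphi_k(X),\psi_k(Y))$ approach $\rho_\mu(\varphi_0(X),\psi_0(Y))$, and the supremum over admissible $(\varphi_0,\psi_0)$ yields the displayed formula.

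For the continuity step, I will fix a bounded continuous pair $(\varphi,\psi)$ with positive $\mu$-variances. Each of $\varphi$, $\varphi^2$, $\psi$, $\psi^2$ and $(x,y)\mapsto\varphi(x)\psi(y)$ is bounded continuous, so $\mu_n\to\mu$ weakly yields convergence of the corresponding integrals. Hence $\Var_{\mu_n}(\varphi(X))\to\Var_\mu(\varphi(X))>0$, similarly for $\psi$, and therefore $\rho_{\mu_n}(\varphi(X),\psi(Y))\to\rho_\mu(\varphi(X),\psi(Y))$. Since $R(\mu_n)\ge\rho_{\mu_n}(\varphi(X),\psi(Y))$ eventually, this gives $\liminf_n R(\mu_n)\ge\rho_\mu(\varphi(X),\psi(Y))$, and taking the supremum over admissible bounded continuous pairs will complete the proof. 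The one non-routine ingredient will be the $L^2$-density of $C_b$ on a Polish space; everything else is standard weak-convergence bookkeeping.
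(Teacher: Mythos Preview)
Your proposal is correct and follows essentially the same approach as the paper's proof: both reduce to bounded continuous test functions using the $L^2$-density of $C_b$ on a Polish space (the paper cites this as Kallenberg's Lemma~1.37), and then use that integrals of bounded continuous functions are preserved under weak convergence. Your organization---first restricting the supremum to $C_b$ pairs, then running the continuity argument---is a cosmetic rephrasing of the paper's direct $\varepsilon$-argument, and your explicit treatment of the degenerate-marginal case is a minor addition the paper omits.
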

As noted by an anonymous referee, the semicontinuity is quite natural since the supremum operator preserves semicontinuity. However, for self-containedness, we provide a proof in Appendix~\ref{appendix: B}.
\begin{remark}
After proving Lemma~\ref{lem: lower semi-continuity}, we became aware of a very similar statement in \cite[Section~II-A]{KamathAnantharamMR3506743}. While the proof ideas are similar, their result has restrictions. Firstly, $X$ and $Y$ are restricted to finite state spaces. Secondly, the convergence of $(X_n,Y_n)$ towards $(X,Y)$ is assumed to be the convergence with respect to the total variation distance instead of the weak convergence.
\end{remark}

\section{Random walks}\label{sect: random walks}

\subsection{Proof of Theorem~\ref{thm: random walks}}
In this subsection, we prove Theorem~\ref{thm: random walks}. As we have explained, the first equality is a direct consequence of the Cs\'{a}ki-Fischer identity, see Theorem~\ref{thm: Csaki-Fisher}. To prove the second equality, for natural numbers $m\geq 1$, we define two continuous-time processes
\[S^{(m)}_t=S_{\min(t,m)}\text{ and }T^{(m)}_t=T_{\min(t,m)},t\geq 0.\]
Then we have that
\[R(S^{(m)},T^{(m)})=R((S_n)_{n\leq m},(T_n)_{n\leq m})=R(\xi_1,\eta_1).\]
As $m\to\infty$, the process $(S^{(m)},T^{(m)})$ converges to $(S,T)$ in the Skorokhod space $D_{\mathbb{R}^2}[0,\infty)$ by \cite[Theorem~1.14]{JacodShiryaevMR1943877}, where $D_{\mathbb{R}^2}[0,\infty)$ denotes the space of c\`{a}dl\`{a}g functions $\omega:[0,\infty)\to\mathbb{R}^2$ endowed with the Skorokhod topology. Therefore, by Lemma~\ref{lem: lower semi-continuity}, we have that
\[R(S,T)\leq\liminf_{m\to\infty}R(S^{(m)},T^{(m)})=R(\xi_1,\eta_1).\]
Finally, since $\xi_1=S_1-S_0$ is a measurable function of $S$ and $\eta_1=T_1-T_0$ is a measurable function of $T$, we have the opposite inequality
\[R(\xi_1,\eta_1)\leq R(S,T).\]

\subsection{A new proof of \texorpdfstring{\eqref{eq: jointly Gaussian}}{(3)}}\label{subsect: new proof of Lancaster}
In this subsection, we provide a new proof of \eqref{eq: jointly Gaussian} based on Theorem~\ref{thm: random walks}, the central limit theorem and Lemma~\ref{lem: lower semi-continuity}.

Let $(X, Y)$ be a two-dimensional Gaussian vector with covariance matrix $\Sigma$ and Pearson correlation coefficient $r$.

Consider a random vector $\left(\xi,\eta\right)$ with the joint distribution in Table~\ref{table: joint distribution}:
\begin{table}
\centering
\begin{tabular}{|c|c|c|}
\hline
$\xi\backslash\eta$ & $c$ & $d$ \\
\hline
$a$ & $p_{ac}$ & $p_{ad}$ \\
\hline
$b$ & $p_{bc}$ & $p_{bd}$ \\
\hline
\end{tabular}
\caption{Joint distribution of $(\xi,\eta)$}
\label{table: joint distribution}
\end{table}

Then for a non-constant function $f$ on $\{a,b\} $ and a non-constant function $g$ on $\{c,d\}$, the Pearson correlation coefficient between $f(\xi)$ and $g(\eta)$ is given by
\begin{equation*}
\operatorname{sgn}(f(a)-f(b))\operatorname{sgn}(g(c)-g(d))\det\begin{pmatrix}p_{ac}&p_{ad}\\p_{bc}&p_{bd}\end{pmatrix}/\sqrt{p_ap_bp_cp_d},
\end{equation*}
where $p_a=P(\xi=a)$, $p_b=P(\xi=b)$, $p_{c}=P(\eta=c)$ and $p_d=P(\eta=d)$. Consequently,
\begin{equation}\label{eq: bivariate Bernoulli}
R(\xi,\eta)=\left |\rho(\xi,\eta)\right |=\left.\left|\det\begin{pmatrix}p_{ac}&p_{ad}\\p_{bc}&p_{bd}\end{pmatrix}\right.\right|/\sqrt{p_ap_bp_cp_d}.
\end{equation}
By appropriately choosing $a$, $b$, $c$, $d$ and the probability mass function $p$, we can ensure that $(\xi, \eta)$ has covariance matrix $\Sigma$ and Pearson correlation coefficient $r$.

Let $(\xi_n,\eta_n)_{n\geq 1}$ be i.i.d. random vectors such that $(\xi_n,\eta_n)$ has the same distribution as $(\xi,\eta)$. Define a two-dimensional random walk $(S,T)$ as in \eqref{eq: defn random walks}. By Theorem~\ref{thm: random walks}, we have that
\[R\left(\frac{S_m-ES_m}{\sqrt{m}},\frac{T_m-ET_m}{\sqrt{m}}\right)\leq R(S,T)=R(\xi_1,\eta_1)=|r|.\]
By the central limit theorem, $\left(\frac{S_m-ES_m}{\sqrt{m}},\frac{T_m-ET_m}{\sqrt{m}}\right)$ converges to $(X,Y)$ in distribution as $m\to\infty$. By Lemma~\ref{lem: lower semi-continuity}, we see that
\[R(X,Y)\leq\liminf_{m\to\infty}R\left(\frac{S_m-ES_m}{\sqrt{m}},\frac{T_m-ET_m}{\sqrt{m}}\right)\leq |r|=|\rho(X,Y)|.\]
On the other hand, by definition, we have that $R(X,Y)\geq |\rho(X,Y)|$.
Therefore, \eqref{eq: jointly Gaussian} holds.

\section{Two-dimensional L\'{e}vy processes and bivariate stable distributions}\label{sect: Levy and stable}
Recall the L\'{e}vy-Khinchine formula for a two-dimensional L\'{e}vy process $(X_t,Y_t)_{t\geq 0}$:
\begin{align}\label{eq: Levy-Khinchine formula}
E[&e^{i(u_1X_t+u_2Y_t)}]=\exp\left(t\left\{i(b_1u_1+b_2u_2)-\frac{1}{2}\sum_{j,k=1,2}\Sigma_{jk}u_ju_k\right.\right.\notag\\
&+\left.\left.\int_{\mathbb{R}^2\setminus\{(0,0)\}}\left(e^{i(u_1x+u_2y)}-1-i(u_1x+u_2y)I_{(0,1]}(x^2+y^2)\right)\,\nu(\mathrm{d}x,\mathrm{d}y)\right\}\right).
\end{align}
Here, $\Sigma=(\Sigma_{jk})_{j,k=1,2}$ is the covariance matrix of the Brownian motion in the L\'{e}vy-It\^{o} decomposition, and $\nu(\mathrm{d}x,\mathrm{d}y)$ is the L\'{e}vy measure on $\mathbb{R}^2\setminus\{(0,0)\}$ satisfying
\[\int_{\mathbb{R}^2\setminus\{(0,0)\}}\min(1,x^2+y^2)\,\nu(\mathrm{d}x,\mathrm{d}y)<\infty.\]
A similar formula holds for an infinitely divisible random vector. For a two-dimensional stable random vector, the L\'{e}vy measure $\nu$ takes the form
\begin{equation}\label{eq: stable Levy measure spectral measure}
\nu(B)=\int_{0}^{\infty}\int_{\mathbb{R}/(2\pi\mathbb{Z})}I_{B}(r\cos\theta,r\sin\theta)\frac{1}{r^{1+\alpha}}\,\mathrm{d}r\tau(\mathrm{d}\theta),\quad B\in\mathcal{B}(\mathbb{R}^2\setminus\{(0,0)\}),
\end{equation}
where the spectral measure $\tau(\mathrm{d}\theta)$ is a finite Borel measure and $\mathcal{B}(\mathbb{R}^2\setminus\{(0,0)\})$ denotes the Borel $\sigma$-field of $\mathbb{R}^2\setminus\{(0,0)\}$.

\subsection{Proof of Theorem~\ref{thm: Levy}}
In this subsection, we prove Theorem~\ref{thm: Levy}. The proof relies crucially on the L\'{e}vy-It\^{o} decomposition and the martingale representation theorem for L\'{e}vy processes. We will briefly present these classical results. We focus on two-dimensional L\'{e}vy processes. Without loss of generality, we may assume that the L\'{e}vy process is canonical. Take the Skorokhod space $D_{\mathbb{R}^2}[0,\infty)$ as the sample space $\Omega$, where $D_{\mathbb{R}^2}[0,\infty)$ is the space of c\`{a}dl\`{a}g functions $\omega:[0,\infty)\to\mathbb{R}^2$ endowed with the Skorokhod topology and the corresponding Borel $\sigma$-field. For $\omega=(\omega_t)_{t\geq 0}$, define $(X_t,Y_t)=\omega_t$ for $t\geq 0$. Let $P$ be a probability measure on $\Omega$ such that $(X_t,Y_t)_{t\geq 0}$ is a two-dimensional L\'{e}vy process. Let
\[\Delta X_t=X_t-\lim_{s\uparrow t}X_s\]
be the jump of $(X_t)_{t\geq 0}$ at time $t$. Similarly, we define $\Delta Y_t$. Then the set $\{t\geq 0:\Delta X_t\neq 0\text{ or }\Delta Y_t\neq 0\}$ of jumping times is countable.
\begin{theorem}[L\'{e}vy-It\^{o} decomposition]
If $(X_t,Y_t)_{t\geq 0}$ is a two-dimensional L\'{e}vy process, then there exists $b=(b^{X},b^{Y})\in\mathbb{R}^2$, a two-dimensional Brownian motion $(B^{X}_t,B^{Y}_t)_{t\geq 0}$ with the covariance matrix $\Sigma$ and a Poisson random measure $N(\mathrm{d}t,\mathrm{d}x,\mathrm{d}y)$ with the intensity measure $\mathrm{d}t\nu(\mathrm{d}x,\mathrm{d}y)$ such that for $t\geq 0$,
\begin{equation}\label{eq: Levy-Ito decomposition}
\begin{aligned}
X_t=b^{X}t+B^{X}_t+\int_{0}^{t}\int_{|x|<1}x\widetilde{N}^{X}(\mathrm{d}s,\mathrm{d}x)+\int_{0}^{t}\int_{|x|\geq 1}xN^{X}(\mathrm{d}s,\mathrm{d}x),\\
Y_t=b^{Y}t+B^{Y}_t+\int_{0}^{t}\int_{|y|<1}y\widetilde{N}^{Y}(\mathrm{d}s,\mathrm{d}y)+\int_{0}^{t}\int_{|y|\geq 1}yN^{Y}(\mathrm{d}s,\mathrm{d}y),
\end{aligned}
\end{equation}
where
\[N^{X}(\mathrm{d}t,\mathrm{d}x)=1_{x\neq 0}\int_{\mathbb{R}}N(\mathrm{d}t,\mathrm{d}x,\mathrm{d}y), N^{Y}(\mathrm{d}t,\mathrm{d}y)=1_{y\neq 0}\int_{\mathbb{R}}N(\mathrm{d}t,\mathrm{d}x,\mathrm{d}y),\] \[\nu^{X}(\mathrm{d}x)=1_{x\neq 0}\int_{\mathbb{R}}\nu(\mathrm{d}x,\mathrm{d}y), \nu^{Y}(\mathrm{d}y)=1_{y\neq 0}\int_{\mathbb{R}}\nu(\mathrm{d}x,\mathrm{d}y)\]
and
\[\widetilde{N}^{X}(\mathrm{d}t,\mathrm{d}x)=N^{X}(\mathrm{d}t,\mathrm{d}x)-\mathrm{d}t\nu^{X}(\mathrm{d}x), \widetilde{N}^{Y}(\mathrm{d}t,\mathrm{d}y)=N^{Y}(\mathrm{d}t,\mathrm{d}y)-\mathrm{d}t\nu^{Y}(\mathrm{d}y).\]
Moreover, $(B_t)_{t\geq 0}$ is independent of $N(\mathrm{d}t,\mathrm{d}x,\mathrm{d}y)$ and
\begin{equation*}
N(\mathrm{d}t,\mathrm{d}x,\mathrm{d}y)=\sum_{(s,\Delta X_s,\Delta Y_s):(\Delta X_s,\Delta Y_s)\neq (0,0)}\delta_{(s,\Delta X_s,\Delta Y_s)}(\mathrm{d}t,\mathrm{d}x,\mathrm{d}y),
\end{equation*}
where $\delta_{z_0}(\mathrm{d}z)$ is the Dirac measure at $z_0\in[0,\infty)\times(\mathbb{R}^2\setminus\{(0,0)\})$. In addition, $(B^{X}_{t})_{t\geq 0}$ is a measurable function of $(X_t)_{t\geq 0}$, and $(B^{Y}_t)_{t\geq 0}$ is a measurable function of $(Y_t)_{t\geq 0}$.
\end{theorem}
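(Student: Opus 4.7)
The plan is to construct all ingredients directly from the sample paths of $(X_t,Y_t)$: first the Poisson random measure of jumps, then the Brownian part as the remainder after the jumps are stripped away. Starting with
\[N(\mathrm{d}t,\mathrm{d}x,\mathrm{d}y)=\sum_{s\geq 0,\,(\Delta X_s,\Delta Y_s)\neq(0,0)}\delta_{(s,\Delta X_s,\Delta Y_s)}(\mathrm{d}t,\mathrm{d}x,\mathrm{d}y),\]
I would verify that $N$ is a Poisson random measure with intensity $\mathrm{d}t\,\nu(\mathrm{d}x,\mathrm{d}y)$. This uses two inputs: (i) the independent increments of $(X,Y)$ imply independence of $N(A)$ and $N(A')$ whenever the time projections of $A,A'$ are disjoint; (ii) for a Borel set $B\subset\mathbb{R}^2\setminus\{(0,0)\}$ bounded away from the origin, the counting process $t\mapsto N([0,t]\times B)$ is a $\mathbb{Z}_{\geq 0}$-valued L\'{e}vy process, hence a Poisson process. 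Identifying the intensity $\nu$ with the one appearing in \eqref{eq: Levy-Khinchine formula} is then a matter of matching characteristic exponents.

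Next, I would split the jump contribution into large and small parts. The large-jump part
\[J^{X,\mathrm{large}}_t=\int_0^t\int_{|x|\geq 1}x\,N^{X}(\mathrm{d}s,\mathrm{d}x)\]
is a well-defined compound Poisson process because $\nu^{X}(\{|x|\geq 1\})<\infty$. For the small jumps I would define, for $0<\epsilon<1$,
\[M^{X,\epsilon}_t=\int_0^t\int_{\epsilon\leq|x|<1}x\,\widetilde{N}^{X}(\mathrm{d}s,\mathrm{d}x),\]
an $L^2$-martingale with $E[(M^{X,\epsilon}_t)^2]=t\int_{\epsilon\leq|x|<1}x^2\,\nu^{X}(\mathrm{d}x)$. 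Since $\int\min(1,x^2)\,\nu^{X}(\mathrm{d}x)<\infty$, Doob's maximal inequality yields uniform Cauchy convergence on compact time intervals as $\epsilon\to 0$; the limit is the compensated integral $\int_0^t\int_{|x|<1}x\,\widetilde{N}^{X}(\mathrm{d}s,\mathrm{d}x)$. This $L^2$-convergence step is the main technical obstacle: the uncompensated sum $\sum_{s\leq t}\Delta X_s\mathbf{1}_{|\Delta X_s|<1}$ generally fails to converge absolutely, and the combination of compensation and the integrability of $\nu$ is exactly what rescues it. The same argument in the $y$-coordinate handles $\widetilde{N}^{Y}$.

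After subtracting from $X_t$ the drift $b^X t$, the large-jump part and the compensated small-jump part (and likewise for $Y_t$), the residual process $(B^X_t,B^Y_t)$ is continuous (every jump of $(X,Y)$ having been removed), has stationary independent increments and starts at $0$, so it is a continuous L\'{e}vy process, hence a Brownian motion; its covariance matrix $\Sigma$ is read off from \eqref{eq: Levy-Khinchine formula} after peeling off the Poisson characteristic exponent. Independence of $(B^X,B^Y)$ and $N$ then follows because the joint characteristic function of any finite combination of increments of $B$ and of values of $N(f)$ factorizes into a Gaussian factor and a Poissonian factor, which is exactly what the L\'{e}vy-Khinchine formula asserts at the level of joint test functions. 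For the measurability claim, the jumps $\{\Delta X_s\}_{s\geq 0}$ are $\sigma(X)$-measurable by construction, hence so is $N^X$, and therefore so are the large-jump compound Poisson part and the $L^2$-limit of the compensated small-jump integrals; subtracting these together with the drift from $X_t$ exhibits $B^X_t$ as a $\sigma(X)$-measurable functional, with the analogous argument working for $B^Y$.
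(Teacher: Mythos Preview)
The paper does not prove this theorem at all: immediately after the statement it writes ``We refer to \cite[Theorem~2.4.16]{ApplebaumMR2512800} for a proof.'' Your sketch is a faithful outline of the standard textbook argument (construct the jump measure from the path, peel off large jumps, pass to the $L^2$-limit for compensated small jumps, identify the continuous remainder as Brownian, and argue independence via factorization of characteristic exponents), which is essentially the approach found in Applebaum; so there is no genuine divergence of method to discuss, and the proposal is adequate for the purpose.
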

We refer to \cite[Theorem~2.4.16]{ApplebaumMR2512800} for a proof.

Next, we introduce the martingale representation. Fix $T>0$. Let $\mathcal{F}^{X}_{T}$ be the augmentation of $\sigma(X_t,0\leq t\leq T)$.
\begin{theorem}[The It\^{o} representation]\label{thm: martingale representation}
Let $F$ be a square-integrable $\mathcal{F}^{X}_{T}$-measurable random variable. Then there exist square-integrable predictable processes $\varphi^{X}$ and $\psi^{X}$ such that almost surely,
\begin{align*}\label{eq: integral representation of F}
F&=E[F]+\int_{0}^{T}\varphi^{X}(t)\,\mathrm{d}B^{X}_t+\int_{0}^{T}\int_{\mathbb{R}\setminus\{0\}}\psi^{X}(t,x)\widetilde{N}^{X}(\mathrm{d}t,\mathrm{d}x)\notag\\
&=E[F]+\int_{0}^{T}\varphi^{X}(t)\,\mathrm{d}B^{X}_t+\int_{0}^{T}\int_{\mathbb{R}\setminus\{0\}}\int_{\mathbb{R}}\psi^{X}(t,x)\widetilde{N}(\mathrm{d}t,\mathrm{d}x,\mathrm{d}y).
\end{align*}
\end{theorem}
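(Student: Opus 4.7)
The plan is the classical density argument for predictable representation: identify a subfamily of $L^2(\mathcal{F}^X_T)$ on which the representation is explicit by It\^o's formula, show it is total, and then extend by the It\^o isometry. The natural candidates are Dol\'eans-Dade exponentials driven by $B^X$ and $\widetilde N^X$, since by construction they already are stochastic integrals plus a constant.

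First I would introduce, for bounded Borel $f:[0,T]\to\mathbb{R}$ and bounded Borel $g:[0,T]\times(\mathbb{R}\setminus\{0\})\to\mathbb{R}$ supported away from $x=0$,
\[ Z^{f,g}_T = \exp\!\Bigl(\int_0^T f(s)\,dB^X_s - \tfrac12\!\int_0^T f(s)^2\,ds + \int_0^T\!\!\!\int g(s,x)\,\widetilde N^X(ds,dx) - \int_0^T\!\!\!\int (e^{g(s,x)}-1-g(s,x))\,\nu^X(dx)\,ds\Bigr), \]
and verify by It\^o's formula for semimartingales with jumps that $Z^{f,g}$ solves
\[ Z^{f,g}_T = 1 + \int_0^T Z^{f,g}_{s-}\,f(s)\,dB^X_s + \int_0^T\!\!\int Z^{f,g}_{s-}\bigl(e^{g(s,x)}-1\bigr)\,\widetilde N^X(ds,dx). \]
For bounded $f,g$ the process $Z^{f,g}$ is an $L^2$-martingale, so the integrands $\varphi^X_s=Z^{f,g}_{s-}f(s)$ and $\psi^X(s,x)=Z^{f,g}_{s-}(e^{g(s,x)}-1)$ are square-integrable and give the representation explicitly on this family.

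Second, I would show that the linear span $\mathcal{D}$ of $\{Z^{f,g}_T\}$ is dense in $L^2(\mathcal{F}^X_T)$. By a multiplicative monotone-class argument it suffices to approximate finite-dimensional complex exponentials $\exp\!\bigl(i\sum_{j=1}^k u_jX_{t_j}\bigr)$ with $0\le t_1<\cdots<t_k\le T$, since these separate probability measures on a Polish space and hence generate a dense subspace of $L^2(\mathcal{F}^X_T)$. By telescoping across the grid $t_1<\cdots<t_k$ and matching the L\'evy-Khinchine exponent on each subinterval, any such complex exponential can be written, up to a deterministic factor, as a product of $Z^{f,g}_T$'s with piecewise-constant $f$ and a suitable simple $g$. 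Third, for arbitrary $F\in L^2(\mathcal{F}^X_T)$ pick $F_n\in\mathcal{D}$ with $F_n\to F$ in $L^2$, so that $F_n=E(F_n)+\int_0^T\varphi_n\,dB^X+\int_0^T\!\int\psi_n\,\widetilde N^X$. Since $B^X$ and $\widetilde N^X$ are orthogonal $L^2$-martingale measures, the two stochastic integrals live in orthogonal subspaces of $L^2$ and the It\^o isometry gives
\[ \|F_n-F_m\|_{L^2}^2 = |E(F_n-F_m)|^2 + \|\varphi_n-\varphi_m\|_{L^2(dt\otimes dP)}^2 + \|\psi_n-\psi_m\|_{L^2(dt\otimes\nu^X\otimes dP)}^2. \]
Hence $(\varphi_n)$ and $(\psi_n)$ are Cauchy in their respective predictable $L^2$-spaces, and their limits $\varphi^X,\psi^X$ deliver the desired representation for $F$.

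The main obstacle is the totality step. One has to be careful that $\mathcal{F}^X_T$ is the augmentation of $\sigma(X_s:s\le T)$, not of $\sigma(B^X)\vee\sigma(N^X)$, even though the L\'evy-It\^o decomposition guarantees that both $B^X$ and $N^X$ are $\mathcal{F}^X_T$-measurable. The cleanest route is to first establish density of complex exponentials of $X$ in $L^2(\mathcal{F}^X_T)$ by the multiplicative monotone-class theorem, and then verify the approximation by $\mathcal{D}$ by equating characteristic exponents on each subinterval; the bookkeeping of the compensator terms is routine but must be done carefully. Once totality is in hand, the extension by isometry is automatic.
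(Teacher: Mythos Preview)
The paper does not give its own proof of this statement: it simply refers to \cite[Section~5.3]{ApplebaumMR2512800} for the details and remarks that the Wiener--L\'evy chaos decomposition of \cite{ItoMR0077017,KunitaMR2083711} is one possible route. Your Dol\'eans--Dade exponential density argument is precisely the approach taken in Applebaum's Section~5.3, so in that sense you have reproduced the cited proof rather than offered an alternative; the chaos-expansion route would be the genuinely different option.

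Your sketch is correct in outline. The one soft spot you already flag is real: to identify $\exp\bigl(i\sum_j u_j X_{t_j}\bigr)$ with a $Z^{f,g}_T$ you would need, on each subinterval, $g(s,x)=i v_j x$ for the accumulated frequency $v_j$, and this $g$ is neither bounded nor supported away from $x=0$. So an additional approximation layer (truncating $g$ near $0$ and at infinity, then passing to the limit in $L^2$) is required before the monotone-class and isometry arguments close. This is routine but is the place where a careless write-up would leave a gap.
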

Clearly, similar results hold for $G\in L^2(\Omega,\mathcal{F}^{Y}_T,P)$. We refer to \cite[Section~5.3]{ApplebaumMR2512800} for details. One way to prove the representation theorem is to use Wiener-L\'{e}vy chaos decomposition, see \cite[Theorem~2]{ItoMR0077017} and \cite[Theorem~1.1]{KunitaMR2083711}.

Now, let us start to prove Theorem~\ref{thm: Levy}. Since $(B^{X}_t)_{t\geq 0}$ is a measurable function of $(X_t)_{t\geq 0}$, $B^{X}_{1}$ is a square-integrable measurable function of $(X_t)_{t\geq 0}$. Similarly, $B^{Y}_{1}$ is a square-integrable measurable function of $(Y_t)_{t\geq 0}$. Hence, we have that
\begin{equation}\label{eq: R geq |rho|}
R((X_t)_{t\geq 0},(Y_t)_{t\geq 0})\geq\max(\rho(B^{X}_1,B^{Y}_1),\rho(-B^{X}_1,B^{Y}_1))=|\rho|.
\end{equation}
Next, by the definition of $\mathrm{Op}(\nu)$, for each small $\varepsilon>0$, there exist measurable functions $\varphi(x)$ and $\psi(y)$ such that $\varphi(0)=\psi(0)=0$, $0<\int_{\mathbb{R}^2}(\varphi(x))^2\,\nu(\mathrm{d}x,\mathrm{d}y)\int_{\mathbb{R}^2}(\psi(y))^2\,\nu(\mathrm{d}x,\mathrm{d}y)<\infty$ and that
\[\int_{\mathbb{R}^2}\varphi(x)\psi(y)\,\nu(\mathrm{d}x,\mathrm{d}y)\geq (\mathrm{Op}(\nu)-\varepsilon)\sqrt{\int_{\mathbb{R}^2}(\varphi(x))^2\,\nu(\mathrm{d}x,\mathrm{d}y)\int_{\mathbb{R}^2}(\psi(y))^2\,\nu(\mathrm{d}x,\mathrm{d}y)}.\]
Let
\begin{align*}
F&=\int_{0}^{1}\int_{\mathbb{R}\setminus\{0\}}\varphi(x)\widetilde{N}^{X}(\mathrm{d}t,\mathrm{d}x)\\ &=\lim_{n\to\infty}\sum_{(t,\Delta X_t):t\leq 1,|\Delta X_t|>1/n}\varphi(\Delta X_t)-\int_{\mathbb{R}^2}1_{|x|>1/n}\varphi(x)\,\nu(\mathrm{d}x,\mathrm{d}y),
\end{align*}
\begin{align*}
G&=\int_{0}^{1}\int_{\mathbb{R}\setminus\{0\}}\psi(y)\widetilde{N}^{Y}(\mathrm{d}t,\mathrm{d}y)\\ &=\lim_{n\to\infty}\sum_{(t,\Delta Y_t):t\leq 1,|\Delta Y_t|>1/n}\psi(\Delta Y_t)-\int_{\mathbb{R}^2}1_{|y|>1/n}\psi(y)\,\nu(\mathrm{d}x,\mathrm{d}y).
\end{align*}
Then $F$ is a square-integrable measurable function of $(X_t)_{t\geq 0}$ and $G$ is a square-integrable measurable function of $(Y_t)_{t\geq 0}$. Moreover, we have that $E[F]=E[G]=0$ and that
\[
\begin{aligned}
\Var(F)&=\int_{\mathbb{R}^2}(\varphi(x))^2\,\nu(\mathrm{d}x,\mathrm{d}y),\\ \Var(G)&=\int_{\mathbb{R}^2}(\psi(y))^2\,\nu(\mathrm{d}x,\mathrm{d}y),\\ \Cov(F,G)&=\int_{\mathbb{R}^2}\varphi(x)\psi(y)\,\nu(\mathrm{d}x,\mathrm{d}y).
\end{aligned}
\]
Hence, we have that
\begin{equation*}
R((X_t)_{t\geq 0},(Y_t)_{t\geq 0})\geq\rho(F,G)\geq\mathrm{Op}(\nu)-\varepsilon.
\end{equation*}
Since $\varepsilon>0$ is arbitrary, we see that
\begin{equation}\label{eq: R geq Op nu}
R((X_t)_{t\geq 0},(Y_t)_{t\geq 0})\geq\mathrm{Op}(\nu).
\end{equation}
Hence, by \eqref{eq: R geq |rho|} and \eqref{eq: R geq Op nu}, we have that $R((X_t)_{t\geq 0},(Y_t)_{t\geq 0})\geq\max(|\rho|,\mathrm{Op}(\nu))$.

To prove the reverse inequality $R((X_t)_{t\geq 0},(Y_t)_{t\geq 0})\leq\max(|\rho|,\mathrm{Op}(\nu))$, we fix $T>0$ and square-integrable non-degenerate $F=F((X_t)_{0\leq t\leq T})$ and $G=G((Y_t)_{0\leq t\leq T})$, where $F(\cdot)$ and $G(\cdot)$ are measurable functions. By \cite[Theorem~12.5]{BillingsleyMR1700749}, the Borel $\sigma$-field $\mathcal{D}$ of the Skorokhod space $D[0,T]$ is equal to the sigma-field generated by finite-dimensional cylinders. Hence, we have that $F\in\sigma(X_t,0\leq t\leq T)\subset\mathcal{F}^{X}_{T}$ and $G\in\sigma(Y_t,0\leq t\leq T)\subset\mathcal{F}^{Y}_T$. By Theorem~\ref{thm: martingale representation}, we find square-integrable predictable processes $\varphi^{X}$, $\psi^{X}$, $\varphi^{Y}$ and $\psi^{Y}$ such that $F=E[F]+F_{B}+F_{N}$ and $G=E[G]+G_{B}+G_{N}$ with probability $1$, where
\begin{equation}
\begin{aligned}
&F_{B}=\int_{0}^{T}\varphi^{X}(t)\,\mathrm{d}B^{X}_t,\\
&G_{B}=\int_{0}^{T}\varphi^{Y}(t)\,\mathrm{d}B^{Y}_t,\\
&F_{N}=\int_{0}^{T}\int_{\mathbb{R}\setminus\{0\}}\psi^{X}(t,x)\widetilde{N}^{X}(\mathrm{d}t,\mathrm{d}x) =\int_{0}^{T}\int_{\mathbb{R}\setminus\{0\}}\int_{\mathbb{R}}\psi^{X}(t,x)\widetilde{N}(\mathrm{d}t,\mathrm{d}x,\mathrm{d}y),\\
&G_{N}=\int_{0}^{T}\int_{\mathbb{R}\setminus\{0\}}\psi^{Y}(t,y)\widetilde{N}^{Y}(\mathrm{d}t,\mathrm{d}y) =\int_{0}^{T}\int_{\mathbb{R}}\int_{\mathbb{R}\setminus\{0\}}\psi^{Y}(t,y)\widetilde{N}(\mathrm{d}t,\mathrm{d}x,\mathrm{d}y).
\end{aligned}
\end{equation}
\begin{claim}\label{claim: orthogonality}
We have that $E[F_{B}F_{N}]=E[F_{B}G_{N}]=E[G_{B}F_{N}]=E[G_{B}G_{N}]=0$.
\end{claim}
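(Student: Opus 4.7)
\medskip
\noindent\textbf{Proof plan for Claim~\ref{claim: orthogonality}.}

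My plan is to show all four identities at once by exploiting the orthogonality between continuous $L^{2}$-martingales and pure-jump $L^{2}$-martingales. Observe that each of $F_{B}=\int_{0}^{T}\varphi^{X}(t)\,\mathrm{d}B^{X}_{t}$ and $G_{B}=\int_{0}^{T}\varphi^{Y}(t)\,\mathrm{d}B^{Y}_{t}$ is an It\^{o} integral against a component of a two-dimensional Brownian motion, hence a \emph{continuous} square-integrable martingale with respect to the augmented filtration $(\mathcal{F}_{t})_{t\geq 0}$ of the full process $(X,Y)$; in particular, each has mean zero and no jumps. On the other hand, each of $F_{N}=\int_{0}^{T}\!\!\int_{\mathbb{R}\setminus\{0\}}\psi^{X}(t,x)\widetilde{N}^{X}(\mathrm{d}t,\mathrm{d}x)$ and $G_{N}=\int_{0}^{T}\!\!\int_{\mathbb{R}\setminus\{0\}}\psi^{Y}(t,y)\widetilde{N}^{Y}(\mathrm{d}t,\mathrm{d}y)$ is an integral against a compensated Poisson random measure and is therefore a \emph{purely discontinuous} square-integrable martingale of mean zero.

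The key point is that a continuous martingale $M$ and a purely discontinuous martingale $Z$ have vanishing quadratic covariation $[M,Z]=\sum_{s}\Delta M_{s}\Delta Z_{s}=0$, simply because $M$ has no jumps. By the integration-by-parts formula
\[M_{T}Z_{T}=\int_{0}^{T}M_{s-}\,\mathrm{d}Z_{s}+\int_{0}^{T}Z_{s-}\,\mathrm{d}M_{s}+[M,Z]_{T},\]
the product $MZ$ is therefore a local martingale starting from $0$, and the square-integrability of $M$ and $Z$ (coming from the martingale representation in Theorem~\ref{thm: martingale representation} together with $F,G\in L^{2}$) upgrades it to a true martingale. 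Taking expectations yields $E[M_{T}Z_{T}]=0$. Applying this with the four choices $(M,Z)\in\{(F_{B},F_{N}),(F_{B},G_{N}),(G_{B},F_{N}),(G_{B},G_{N})\}$ delivers the four desired identities.

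Equivalently, one may record this as the mixed It\^{o} isometry: for predictable $\varphi$ and $\psi$ of the appropriate integrability classes,
\[E\!\left[\int_{0}^{T}\varphi(t)\,\mathrm{d}B^{X}_{t}\cdot\int_{0}^{T}\!\!\int_{\mathbb{R}\setminus\{0\}}\int_{\mathbb{R}}\psi(t,x)\,\widetilde{N}(\mathrm{d}t,\mathrm{d}x,\mathrm{d}y)\right]=0,\]
because the quadratic covariation measure of $B^{X}$ against $\widetilde{N}$ is identically zero (the L\'{e}vy-It\^{o} decomposition gives $B$ independent of $N$, and more strongly makes them orthogonal martingales). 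The same statement holds with $B^{X}$ replaced by $B^{Y}$ or with the inner jump variable chosen from the $Y$-coordinate.

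The only potential obstacle is verifying the $L^{2}$ hypotheses needed to pass from local martingale to martingale; this is not a genuine difficulty, because the martingale representation theorem produces integrands $\varphi^{X},\varphi^{Y},\psi^{X},\psi^{Y}$ that are square-integrable in the relevant sense, and consequently $F_{B},G_{B},F_{N},G_{N}$ all lie in $L^{2}$, so their products are in $L^{1}$ by Cauchy-Schwarz. I would not attempt to prove the claim through independence arguments alone, since the integrands from the martingale representation are predictable with respect to the full filtration of $(X,Y)$ and need not factor; the orthogonality-of-martingales route above avoids this complication entirely.
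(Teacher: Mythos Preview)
Your proposal is correct and takes a genuinely different route from the paper's own proof. The paper instead constructs a single martingale-valued measure $M(t,A)=\int_{0}^{t}\int_{A\setminus\{0\}}\widetilde{N}(\mathrm{d}s,\mathrm{d}x,\mathrm{d}y)+B^{X}_{t}\delta_{0}(A)$ (replacing $B^{X}$ by $B^{Y}$ when needed), writes each of $F_{B},G_{B},F_{N},G_{N}$ as an integral against $M$ with an integrand supported either at the origin $\{(0,0)\}$ or on its complement, and then reads off $E[F_{B}G_{N}]=0$ etc.\ from the It\^{o} isometry because the two integrands have disjoint spatial supports. Your argument is more abstract: it uses directly that the quadratic covariation of a continuous $L^{2}$-martingale with a purely discontinuous one vanishes, so $E[M_{T}Z_{T}]=E\big[[M,Z]_{T}\big]=0$. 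Both approaches encode the same underlying orthogonality between the Brownian and jump parts in the L\'evy--It\^{o} decomposition; yours is cleaner and needs no auxiliary measure, while the paper's is more concrete and ties in with the It\^{o}-isometry bookkeeping used immediately afterward. One small refinement: the sentence ``products are in $L^{1}$ by Cauchy--Schwarz, so the local martingale upgrades to a true martingale'' is not quite the right justification, since an $L^{1}$-bounded local martingale need not be a martingale. The correct (and standard) statement to invoke is that for $M,Z\in\mathcal{H}^{2}$ the process $MZ-[M,Z]$ is a uniformly integrable martingale, whence $E[M_{T}Z_{T}]=E\big[[M,Z]_{T}\big]$; this follows from the $L^{2}$ hypotheses you already noted.
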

\begin{proof}
We only provide the proof for $E(F_{B}G_{N})=0$, as the other equations can be proved similarly. We define a martingale-valued measure $M$ as follows: for $t\geq 0$ and $A\subset\mathbb{R}^2$, we define
\[M(t,A)=\int_{0}^{t}\int_{A\setminus\{(0,0)\}}\widetilde{N}(\mathrm{d}s,\mathrm{d}x,\mathrm{d}y) +B^{X}_t\delta_{(0,0)}(A).\]
Let $\mu(t,A)=E(M(t,A)^2)$. By independence between $\widetilde{N}$ and $B^{X}$, we get that
\[\mu(t,A)=t\nu(A\setminus\{0\})+t\Sigma_{11}\delta_{(0,0)}(A).\]
Let $f(t,x,y)=1_{x=y=0}\varphi^{X}(t)$ and $g(t,x,y)=1_{y\neq 0}\psi^{Y}(t,y)$. Then $F_{B}$ is equal to the stochastic integration $\int_{0}^{T}f(t,x,y)M(\mathrm{d}t,\mathrm{d}x,\mathrm{d}y)$ and $G_{N}$ is equal to the stochastic integration $\int_{0}^{T}g(t,x,y)M(\mathrm{d}t,\mathrm{d}x,\mathrm{d}y)$. For the general theory of stochastic integration against a certain type of martingale-valued measure, we refer to \cite[Chapter~4]{ApplebaumMR2512800}. By It\^{o}'s isometry for stochastic integrals (see \cite[Theorem~4.2.3 and Exercise~4.2.4]{ApplebaumMR2512800}), we have that
\[E[F_{B}G_{N}]=\int_{0}^{T}\int_{\mathbb{R}^2} f(t,x,y)g(t,x,y)\,\mu(\mathrm{d}t,\mathrm{d}x,\mathrm{d}y)=0\]
by the definitions of $f$ and $g$.
\end{proof}
By Claim~\ref{claim: orthogonality}, we have that
\begin{equation}\label{eq: Var Cov sep B N}
\left\{
\begin{aligned}
\Var(F)&=E[F_{B}^2]+E[F_{N}^2],\\
\Var(G)&=E[G_{B}^2]+E[G_{N}^2],\\
\Cov(F,G)&=E[F_{B}G_{B}]+E[F_{N}G_{N}].
\end{aligned}
\right.
\end{equation}
By It\^{o}'s isometry for stochastic integrals, we have that
\begin{equation*}
\begin{aligned}
E[F_{B}^2]&=\Sigma_{11}\int_{0}^{T}(\varphi^{X}(t))^2\,\mathrm{d}t,\\ E[G_{B}^2]&=\Sigma_{22}\int_{0}^{T}(\varphi^{Y}(t))^2\,\mathrm{d}t,\\ E[F_{B}G_{B}]&=\Sigma_{12}\int_{0}^{T}\varphi^{X}(t)\varphi^{Y}(t)\,\mathrm{d}t.
\end{aligned}
\end{equation*}
By the Cauchy-Schwarz inequality, we obtain that
\begin{equation}\label{eq: EFBGB|rho|sqrtEFB2EGB2}
E[F_{B}G_{B}]\leq |\rho|\sqrt{E(F_{B}^2)E(G_{B}^2)}.
\end{equation}
Next, we wish to prove that
\begin{equation}\label{eq: EFNGNOpvsqrtEFN2EGN2}
E[F_{N}G_{N}]\leq \mathrm{Op}(\nu)\sqrt{E(F_{N}^2)E(G_{N}^2)}.
\end{equation}
Similarly, by It\^{o}'s isometry for stochastic integrals against compensated Poisson random measure, we get that
\begin{equation*}
\begin{aligned}
E[F_{N}^2]&=\int_{0}^{T}\int_{\mathbb{R}^2}1_{x\neq 0}(\psi^{X}(t,x))^2\,\mathrm{d}t\nu(\mathrm{d}x,\mathrm{d}y),\\ E[G_{N}^2]&=\int_{0}^{T}\int_{\mathbb{R}^2}1_{y\neq 0}(\psi^{Y}(t,y))^2\,\mathrm{d}t\nu(\mathrm{d}x,\mathrm{d}y),\\ E[F_{N}G_{N}]&=\int_{0}^{T}\int_{\mathbb{R}^2}1_{x\neq 0,y\neq 0}\psi^{X}(t,x)\psi^{Y}(t,y)\,\mathrm{d}t\nu(\mathrm{d}x,\mathrm{d}y).
\end{aligned}
\end{equation*}
By the definition of $\mathrm{Op}(\nu)$, we have that
\begin{multline}\label{eq: defn Op applied}
\int_{\mathbb{R}^2}1_{x\neq 0,y\neq 0}\psi^{X}(t,x)\psi^{Y}(t,y)\,\nu(\mathrm{d}x,\mathrm{d}y)\\
\leq\mathrm{Op}(\nu)\sqrt{\int_{\mathbb{R}^2}1_{x\neq 0}(\psi^{X}(t,x))^2\,\nu(\mathrm{d}x,\mathrm{d}y)}\sqrt{\int_{\mathbb{R}^2}1_{y\neq 0}(\psi^{Y}(t,y))^2\,\nu(\mathrm{d}x,\mathrm{d}y)}.
\end{multline}
Integrating both sides of \eqref{eq: defn Op applied} against $\mathrm{d}t$ and using the Cauchy-Schwarz inequality, we prove \eqref{eq: EFNGNOpvsqrtEFN2EGN2}. Finally, by \eqref{eq: Var Cov sep B N}, \eqref{eq: EFBGB|rho|sqrtEFB2EGB2}, \eqref{eq: EFNGNOpvsqrtEFN2EGN2} and the Cauchy-Schwarz inequality, we have that
\begin{align*}
\Cov(F,G)&\leq\max(|\rho|,\mathrm{Op}(\nu))\left(\sqrt{E[F_{B}^2]E[G_{B}^2]}+\sqrt{E[F_{N}^2]E[G_{N}^2]}\right)\\
&\leq \max(|\rho|,\mathrm{Op}(\nu))\sqrt{E[F_{B}^2]+E[F_{N}^2]}\sqrt{E[G_{B}^2]+E[G_{N}^2]}\\
&=\max(|\rho|,\mathrm{Op}(\nu))\sqrt{\Var(F)}\sqrt{\Var(G)}.
\end{align*}
Since the above inequality holds for all proper $F=F((X_t)_{0\leq t\leq T})$ and $G=G((Y_t)_{0\leq t\leq T})$, we get that
\[R((X_t)_{0\leq t\leq T},(Y_t)_{0\leq t\leq T})\leq \max(|\rho|,\mathrm{Op}(\nu)).\]
As $T\to\infty$, $((X_t)_{0\leq t\leq T},(Y_t)_{0\leq t\leq T})$ converges to $((X_t)_{t\geq 0},(Y_t)_{t\geq 0})$. By Lemma~\ref{lem: lower semi-continuity} (lower semi-continuity), we have that
\[R((X_t)_{t\geq 0},(Y_t)_{t\geq 0})\leq \liminf_{T\to\infty}R((X_t)_{0\leq t\leq T},(Y_t)_{0\leq t\leq T})\leq\max(|\rho|,\mathrm{Op}(\nu)),\]
and the proof is complete.

\subsection{Proof of Theorem~\ref{thm: stable}}
In this subsection, we will prove Theorem~\ref{thm: stable}. Firstly, we briefly explain the ideas. We have found the maximal correlation coefficient for a two-dimensional L\'{e}vy process in Theorem~\ref{thm: Levy}. Then we apply Theorem~\ref{thm: Levy} to a two-dimensional $\alpha$-stable process $(X_t,Y_t)_{t\geq 0}$ with $0<\alpha<2$. The Brownian part vanishes. Hence, it suffices to calculate $\mathrm{Op}(\nu)$. For a stable process, $\nu$ takes the special form \eqref{eq: stable Levy measure spectral measure}. If $\tau(\mathrm{d}\theta)$ has a density $\tau(\theta)$ with respect to the Lebesgue measure $\mathrm{d}\theta$, the L\'{e}vy measure $\nu(\mathrm{d}x,\mathrm{d}y)$ also has a density $\nu(x,y)$. Moreover, if $\tau(-\theta)=\tau(\theta)$, we have $\nu(-x,-y)=\nu(x,y)$. In this case, we can define a homogeneous kernel
\[K(x,y)=\frac{\nu(x,y)}{\sqrt{\nu_{X}(x)\nu_{Y}(y)}}\]
of degree $-1$, where
\[\nu_{X}(x)=\int_{\mathbb{R}}\nu(x,y)\,\mathrm{d}y,\nu_{Y}(y)=\int_{\mathbb{R}}\nu(x,y)\,\mathrm{d}x,\text{ and }K(\lambda x,\lambda y)=|\lambda|^{-1}K(x,y)\]
for all $x,y\in\mathbb{R}$ and $\lambda\neq 0$. The kernel $K(x,y)$ induces a linear operator $K$ on the $L^2$-space by
\[K\psi(x):=\int_{\mathbb{R}}K(x,y)\psi(y)\,\mathrm{d}y.\]
Then the constant $\mathrm{Op}(\nu)$ is just the operator norm
\[\|K\|=\sup_{\|\psi\|_2>0}\frac{\|K\psi\|_2}{\|\psi\|_2}.\]
Here, ``$\mathrm{Op}$'' is short for the word ``operator''. The determination of the norm $\|K\|$ of the homogeneous kernel $K$ of degree $-1$ is a classical problem under the name ``Hilbert-Hardy inequality'', see \cite[Theorem~319]{HardyLittlewoodPolyaMR944909} and \cite[Theorem~42.9]{YangMR2962669}. However, in our case, $\tau(\mathrm{d}\theta)$ is not necessarily absolutely continuous with respect to the Lebesgue measure. Moreover, even if it is possible to define the kernel $K(x,y)$ in certain cases, because of the absence of the symmetry $\tau(-\theta)=\tau(\theta)$, $K(x,y)$ is not homogeneous. Instead, $K(x,y)$ is only positively homogeneous, that is, $K(\lambda x,\lambda y)=\lambda^{-1}K(x,y)$ for $x,y\in\mathbb{R}$ and $\lambda>0$. The difference between homogeneity and positive homogeneity results in different expressions for $\|K\|$. Indeed, in the case where $K$ is positive homogeneous, $\|K\|=\mathrm{Op}(\nu)$ is expressed as the spectral norm of a $2\times 2$ matrix, see Lemma~\ref{lem: stable Op nu}. However, in the case where $K(x,y)$ is a homogeneous kernel, by \cite[Theorem~42.9]{YangMR2962669}, we have that
\[\|K\|=\int_{-\infty}^{\infty}K(x,1)|x|^{-\frac{1}{2}}\,\mathrm{d}x.\]
Equivalently, in terms of $\alpha$ and $\tau$, we have that
\[\|K\|=\frac{\int_{0}^{2\pi}|\cos\theta\sin\theta|^{\frac{\alpha}{2}}\tau(\theta)\,\mathrm{d}\theta}{\sqrt{\int_{0}^{2\pi}|\cos\theta|^{\alpha}\tau(\theta)\,\mathrm{d}\theta\int_{0}^{2\pi}|\sin\theta|^{\alpha}\tau(\theta)\,\mathrm{d}\theta}}.\]
We are not aware of existing results for $\mathrm{Op}(\nu)$ in general. Hence, we calculate $\mathrm{Op}(\nu)$ in Lemma~\ref{lem: stable Op nu} by an adaptation of the argument leading to the Hilbert-Hardy inequalities. In this way, we subsequently find the maximal correlation coefficient $R((X_t)_{t\geq 0},(Y_t)_{t\geq 0})$ for a two-dimensional stable process. As $\alpha$-stable distributions are marginal distributions of $\alpha$-stable processes, we have the upper bound $R(X_1,Y_1)\leq R((X_t)_{t\geq 0},(Y_t)_{t\geq 0})$. The reverse inequality is a consequence of the convergence of stable random walks towards stable processes, Theorem~\ref{thm: random walks} and Lemma~\ref{lem: lower semi-continuity}. However, we are unable to generalize Theorem~\ref{thm: stable} to general infinitely divisible distributions. Indeed, it is possible to have $R(X_1,Y_1)<R((X_t)_{t\geq 0},(Y_t)_{t\geq 0})$ for a general two-dimensional L\'{e}vy process, see Examples~\ref{exam: 3} and \ref{exam: 4}.

\begin{lemma}\label{lem: stable Op nu}
Suppose that the L\'{e}vy measure has the form \eqref{eq: stable Levy measure spectral measure}. Let
\begin{equation*}
\begin{aligned}
&C_{++}=\int_{0}^{\frac{\pi}{2}}|\cos\theta\sin\theta|^{\frac{\alpha}{2}}\,\tau(\mathrm{d}\theta),\quad &C_{+-}=\int_{\frac{3\pi}{2}}^{2\pi}|\cos\theta\sin\theta|^{\frac{\alpha}{2}}\,\tau(\mathrm{d}\theta),\\
&C_{-+}=\int_{\frac{\pi}{2}}^{\pi}|\cos\theta\sin\theta|^{\frac{\alpha}{2}}\,\tau(\mathrm{d}\theta),\quad &C_{--}=\int_{\pi}^{\frac{3\pi}{2}}|\cos\theta\sin\theta|^{\frac{\alpha}{2}}\,\tau(\mathrm{d}\theta),
\end{aligned}
\end{equation*}
and
\begin{equation*}
\begin{aligned}
&D^{x}_{+}=\int_{-\frac{\pi}{2}}^{\frac{\pi}{2}}|\cos\theta|^{\alpha}\,\tau(\mathrm{d}\theta),\quad &D^{x}_{-}=\int_{\frac{\pi}{2}}^{\frac{3\pi}{2}}|\cos\theta|^{\alpha}\,\tau(\mathrm{d}\theta),\\
&D^{y}_{+}=\int_{0}^{\pi}|\sin\theta|^{\alpha}\,\tau(\mathrm{d}\theta),\quad &D^{y}_{-}=\int_{\pi}^{2\pi}|\sin\theta|^{\alpha}\,\tau(\mathrm{d}\theta).
\end{aligned}
\end{equation*}
Then we have that
\[\mathrm{Op}(\nu)=\left\|\begin{pmatrix}
C_{++}/\sqrt{D^{x}_{+}D^{y}_{+}} & \quad C_{+-}/\sqrt{D^{x}_{+}D^{y}_{-}}\\
C_{-+}/\sqrt{D^{x}_{-}D^{y}_{+}} & \quad C_{--}/\sqrt{D^{x}_{-}D^{y}_{-}}
\end{pmatrix}\right\|_2\]
with the convention that $0/0=0$, where $\|\cdot\|_2$ denotes the spectral norm.
\end{lemma}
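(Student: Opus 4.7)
The quantity $\mathrm{Op}(\nu)$ is the norm of the bilinear form $B(\varphi,\psi):=\int_{\mathbb{R}^{2}}\varphi(x)\psi(y)\,\nu(\mathrm{d}x,\mathrm{d}y)$ viewed on $L^{2}(\nu_X)\times L^{2}(\nu_Y)$. Since $\varphi(0)=\psi(0)=0$, I split $\varphi=\varphi_{+}+\varphi_{-}$ and $\psi=\psi_{+}+\psi_{-}$ according to the sign of the argument, which gives $B=\sum_{\varepsilon,\delta\in\{+,-\}}B_{\varepsilon\delta}$ where $B_{\varepsilon\delta}(\varphi_{\varepsilon},\psi_{\delta})$ integrates over the quadrant $\{\operatorname{sgn}(x)=\varepsilon,\ \operatorname{sgn}(y)=\delta\}$. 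Under the polar representation \eqref{eq: stable Levy measure spectral measure} this restriction is carried by the $\theta$-arc $\Theta_{\varepsilon\delta}$ on which $(\operatorname{sgn}\cos\theta,\operatorname{sgn}\sin\theta)=(\varepsilon,\delta)$, so $B_{\varepsilon\delta}$ is really a one-quadrant piece involving only the ray parameter $r>0$ and $\theta\in\Theta_{\varepsilon\delta}$.

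On each $\Theta_{\varepsilon\delta}$ I perform the ray substitution $u=r|\cos\theta|$, so that $r|\sin\theta|=u|\tan\theta|$ and $r^{-1-\alpha}\mathrm{d}r=|\cos\theta|^{\alpha}u^{-1-\alpha}\mathrm{d}u$. Writing $f_{\varepsilon}(u)=\varphi_{\varepsilon}(\varepsilon u)$ and $g_{\delta}(v)=\psi_{\delta}(\delta v)$ for $u,v>0$, and then rescaling by $\tilde f_{\varepsilon}(u)=u^{-\alpha/2}f_{\varepsilon}(u)$ (and similarly $\tilde g_{\delta}$), a direct computation collapses the powers of $\cos\theta$ and $\sin\theta$ into
\[B_{\varepsilon\delta}=\int_{\Theta_{\varepsilon\delta}}|\cos\theta\sin\theta|^{\alpha/2}\int_{0}^{\infty}\tilde f_{\varepsilon}(u)\,\tilde g_{\delta}(u|\tan\theta|)\,\frac{\mathrm{d}u}{u}\,\tau(\mathrm{d}\theta),\]
while the new norms satisfy $\|\tilde f_{\varepsilon}\|_{L^{2}(du/u)}^{2}=\|\varphi_{\varepsilon}\|_{L^{2}(\nu_X)}^{2}/D^{x}_{\varepsilon}$ and the analogous identity for $\tilde g_{\delta}$. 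The substitution $u=e^{s}$ identifies $L^{2}(du/u)$ with $L^{2}(\mathbb{R})$, turning the inner integral into the shifted inner product $\int_{\mathbb{R}}\hat f_{\varepsilon}(s)\,\hat g_{\delta}(s+\log|\tan\theta|)\,\mathrm{d}s$, which Cauchy-Schwarz bounds by $\|\hat f_{\varepsilon}\|_{2}\|\hat g_{\delta}\|_{2}$. Integrating in $\theta$ then yields $|B_{\varepsilon\delta}|\leq M_{\varepsilon\delta}\,\|\varphi_{\varepsilon}\|_{L^{2}(\nu_X)}\|\psi_{\delta}\|_{L^{2}(\nu_Y)}$ with $M_{\varepsilon\delta}=C_{\varepsilon\delta}/\sqrt{D^{x}_{\varepsilon}D^{y}_{\delta}}$. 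Summing the four estimates and applying the definition of the spectral norm to the vectors $(\|\varphi_{+}\|,\|\varphi_{-}\|)$ and $(\|\psi_{+}\|,\|\psi_{-}\|)$ gives $\mathrm{Op}(\nu)\leq\|M\|_{2}$.

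For the matching lower bound I produce near-extremizers. Because every entry of $M$ is non-negative, Perron-Frobenius furnishes non-negative unit vectors $u=(u_{+},u_{-})$ and $v=(v_{+},v_{-})$ with $u^{T}Mv=\|M\|_{2}$. For $N>0$ let $\chi_{N}(s)=(2N)^{-1/2}1_{[-N,N]}(s)$, declare $\hat f_{\varepsilon}=(u_{\varepsilon}/\sqrt{D^{x}_{\varepsilon}})\chi_{N}$ and $\hat g_{\delta}=(v_{\delta}/\sqrt{D^{y}_{\delta}})\chi_{N}$ (treating summands with vanishing $D$ as zero), and reverse all substitutions to obtain $\varphi,\psi$ with $\|\varphi\|_{L^{2}(\nu_X)}=\|u\|=1$ and $\|\psi\|_{L^{2}(\nu_Y)}=\|v\|=1$. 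For every $\theta$ with $|\tan\theta|\in(0,\infty)$ the shifted inner product $\int\chi_{N}(s)\chi_{N}(s+\log|\tan\theta|)\,\mathrm{d}s$ is bounded by $1$ and tends to $1$ as $N\to\infty$; since $|\cos\theta\sin\theta|^{\alpha/2}$ is a $\tau$-integrable dominator on each $\Theta_{\varepsilon\delta}$, dominated convergence delivers $B_{\varepsilon\delta}\to M_{\varepsilon\delta}u_{\varepsilon}v_{\delta}$, hence $B(\varphi,\psi)\to u^{T}Mv=\|M\|_{2}$, giving $\mathrm{Op}(\nu)\geq\|M\|_{2}$.

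The main obstacle is the substitution bookkeeping in the quadrant reduction: one must verify that across all four arcs the product $|\cos\theta|^{\alpha}\cdot|\tan\theta|^{\alpha/2}$ really consolidates into the symmetric weight $|\cos\theta\sin\theta|^{\alpha/2}$, and one must justify the convention $0/0=0$ by noting that $D^{x}_{\varepsilon}=0$ forces $\tau$ to be supported where $\cos\theta$ vanishes, which in turn forces the corresponding $C_{\varepsilon\,\cdot}$ to vanish as well (and symmetrically in $y$), so the degenerate row or column of $M$ plays no role. Once this accounting is settled, the Cauchy-Schwarz upper bound and the dominated-convergence lower bound are essentially forced.
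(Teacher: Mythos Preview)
Your proof is correct and follows essentially the same strategy as the paper: split by sign-quadrants, bound each piece via Cauchy--Schwarz to reduce the problem to the $2\times 2$ matrix $M$, and then exhibit near-extremizers achieving $\|M\|_{2}$. The only differences are cosmetic: the paper applies Cauchy--Schwarz directly in the $(r,\theta)$ variables by inserting the weights $|\tan\theta|^{\pm\alpha/4}$ rather than passing to the multiplicative group via $u=e^{s}$, and for the lower bound it takes the test functions $\varphi(x)=1_{|x|>1}|x|^{\alpha/2-\varepsilon}b^{\varphi}_{\operatorname{sgn}x}$ with $\varepsilon\downarrow 0$ in place of your long boxes $\chi_{N}$ with $N\to\infty$---both are standard approximate extremizers for dilation-invariant kernels and yield the same limit.
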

\begin{proof}
Using the polar coordinates, by \eqref{eq: stable Levy measure spectral measure}, we have that
\[L:=\int_{\mathbb{R}^2}\varphi(x)\psi(y)\,\nu(\mathrm{d}x,\mathrm{d}y)=\int_{0}^{\infty}\int_{\mathbb{R}/(2\pi\mathbb{Z})} \varphi(r\cos\theta)\psi(r\sin\theta)\frac{1}{r^{1+\alpha}}\,\mathrm{d}r\tau(\mathrm{d}\theta).\]
Since we require that $\varphi(0)=\psi(0)=0$, we may assume that $\cos\theta\neq 0$ and $\sin\theta\neq 0$ in the above integral. Write
\[L=L_{++}+L_{-+}+L_{--}+L_{+-},\]
where
\begin{equation*}
\begin{aligned}
L_{++}=\int_{0}^{\infty}\int_{0}^{\frac{\pi}{2}} \varphi(r\cos\theta)\psi(r\sin\theta)\frac{1}{r^{1+\alpha}}\,\mathrm{d}r\tau(\mathrm{d}\theta),\\
L_{-+}=\int_{0}^{\infty}\int_{\frac{\pi}{2}}^{\pi} \varphi(r\cos\theta)\psi(r\sin\theta)\frac{1}{r^{1+\alpha}}\,\mathrm{d}r\tau(\mathrm{d}\theta),\\
L_{--}=\int_{0}^{\infty}\int_{\pi}^{\frac{3\pi}{2}} \varphi(r\cos\theta)\psi(r\sin\theta)\frac{1}{r^{1+\alpha}}\,\mathrm{d}r\tau(\mathrm{d}\theta),\\
L_{+-}=\int_{0}^{\infty}\int_{\frac{3\pi}{2}}^{2\pi} \varphi(r\cos\theta)\psi(r\sin\theta)\frac{1}{r^{1+\alpha}}\,\mathrm{d}r\tau(\mathrm{d}\theta).\\
\end{aligned}
\end{equation*}
By writing the integrand as the product of \[\varphi(r\cos\theta)r^{-\frac{1+\alpha}{2}}|\tan\theta|^{\frac{\alpha}{4}}\text{ and }\psi(r\sin\theta)r^{-\frac{1+\alpha}{2}}|\cot\theta|^{\frac{\alpha}{4}},\]
using the Cauchy-Schwarz inequality, we obtain that
\[L_{++}\leq\sqrt{I^{\varphi}_{++}I^{\psi}_{++}},\]
where
\[I^{\varphi}_{++}=\int_{0}^{\infty}\int_{0}^{\frac{\pi}{2}}(\varphi(r\cos\theta))^2\frac{1}{r^{1+\alpha}}|\tan\theta|^{\frac{\alpha}{2}}\,\mathrm{d}r\tau(\mathrm{d}\theta)\]
and
\[I^{\psi}_{++}=\int_{0}^{\infty}\int_{0}^{\frac{\pi}{2}}(\psi(r\sin\theta))^2\frac{1}{r^{1+\alpha}}|\cot\theta|^{\frac{\alpha}{2}}\,\mathrm{d}r\tau(\mathrm{d}\theta).\]
By performing the change of variable $x=r\cos\theta$, we get that
\begin{equation*}
I^{\varphi}_{++}=\int_{0}^{\frac{\pi}{2}}\int_{0}^{\infty}(\varphi(x))^2\frac{1}{|x|^{1+\alpha}}|\cos\theta\sin\theta|^{\frac{\alpha}{2}}\,\tau(\mathrm{d}\theta)\mathrm{d}x=C_{++}F_{+},
\end{equation*}
where
\begin{equation*}
F_{+}=\int_{0}^{\infty}(\varphi(x))^2\frac{1}{|x|^{1+\alpha}}\,\mathrm{d}x.
\end{equation*}
Similarly, by performing the change of variable $y=r\sin\theta$, we get that
\begin{equation*}
I^{\psi}_{++}=C_{++}G_{+},
\end{equation*}
where
\begin{equation*}
G_{+}=\int_{0}^{\infty}(\psi(y))^2\frac{1}{|y|^{1+\alpha}}\,\mathrm{d}y.
\end{equation*}
Hence, we have that
\begin{equation}
L_{++}\leq C_{++}\sqrt{F_{+}}\sqrt{G_{+}}.
\end{equation}
Similarly, we have that
\begin{equation*}
L_{+-}\leq C_{+-}\sqrt{F_{+}}\sqrt{G_{-}},\quad L_{-+}\leq C_{-+}\sqrt{F_{-}}\sqrt{G_{+}},\quad L_{--}\leq C_{--}\sqrt{F_{-}}\sqrt{G_{-}},
\end{equation*}
where
\begin{equation*}
F_{-}=\int_{-\infty}^{0}(\varphi(x))^2\frac{1}{|x|^{1+\alpha}}\,\mathrm{d}x,\quad G_{-}=\int_{-\infty}^{0}(\psi(y))^2\frac{1}{|y|^{1+\alpha}}\,\mathrm{d}y.
\end{equation*}
Hence, we have that
\begin{equation}\label{eq: varphi x psi y leq CFG}
\begin{aligned}
\int_{\mathbb{R}^2}\varphi(x)\psi(y)\,\nu(\mathrm{d}x,\mathrm{d}y)\leq & C_{++}\sqrt{F_{+}}\sqrt{G_{+}}+C_{+-}\sqrt{F_{+}}\sqrt{G_{-}}\\
&+C_{-+}\sqrt{F_{-}}\sqrt{G_{+}}+C_{--}\sqrt{F_{-}}\sqrt{G_{-}}.
\end{aligned}
\end{equation}
Similarly, we find that
\begin{equation}\label{eq: varphi x 2 leq DF}
\int_{\mathbb{R}^2}(\varphi(x))^2\,\nu(\mathrm{d}x,\mathrm{d}y)=D^{x}_{+}F_{+}+D^{x}_{-}F_{-}
\end{equation}
and that
\begin{equation}\label{eq: psi y 2 leq DG}
\int_{\mathbb{R}^2}(\psi(y))^2\,\nu(\mathrm{d}x,\mathrm{d}y)=D^{y}_{+}G_{+}+D^{y}_{-}G_{-}.
\end{equation}
Let
\[A=\begin{pmatrix}
C_{++}/\sqrt{D^{x}_{+}D^{y}_{+}} & \quad C_{+-}/\sqrt{D^{x}_{+}D^{y}_{-}}\\
C_{-+}/\sqrt{D^{x}_{-}D^{y}_{+}} & \quad C_{--}/\sqrt{D^{x}_{-}D^{y}_{-}}
\end{pmatrix}\]
with the convention that $0/0=0$. Let $\|A\|_2$ be the spectral norm of $A$. Then by \eqref{eq: varphi x psi y leq CFG}, \eqref{eq: varphi x 2 leq DF} and \eqref{eq: psi y 2 leq DG}, we have that
\[\int_{\mathbb{R}^2}\varphi(x)\psi(y)\,\nu(\mathrm{d}x,\mathrm{d}y)\leq \|A\|_2\sqrt{\int_{\mathbb{R}^2}(\varphi(x))^2\,\nu(\mathrm{d}x,\mathrm{d}y)\int_{\mathbb{R}^2}(\psi(y))^2\,\nu(\mathrm{d}x,\mathrm{d}y)}.\]
Hence, $\mathrm{Op}(\nu)\leq \|A\|_2$. It remains to prove the reverse inequality $\mathrm{Op}(\nu)\geq \|A\|_2$. Without loss of generality, we may assume that $\|A\|_2>0$. Equivalently, we assume that $D^{x}_{+}+D^{x}_{-}>0$ and $D^{y}_{+}+D^{y}_{-}>0$. For this purpose, for $\varepsilon>0$, we set
\[\varphi(x)=1_{x>1}|x|^{\frac{\alpha}{2}-\varepsilon}b^{\varphi}_{+}+1_{x<-1}|x|^{\frac{\alpha}{2}-\varepsilon}b^{\varphi}_{-},\quad \psi(y)=1_{y>1}|y|^{\frac{\alpha}{2}-\varepsilon}b^{\psi}_{+}+1_{y<-1}|y|^{\frac{\alpha}{2}-\varepsilon}b^{\psi}_{-},\]
where the positive constants $b^{\varphi}_{+}$, $b^{\varphi}_{-}$, $b^{\psi}_{+}$ and $b^{\psi}_{-}$ will be chosen later. Then we have that
\begin{align*}
L_{++}&=b^{\varphi}_{+}b^{\psi}_{+}\int_{0}^{\infty}\int_{0}^{\frac{\pi}{2}}1_{r\cos\theta>1,r\sin\theta>1} |r\cos\theta|^{\frac{\alpha}{2}-\varepsilon}|r\sin\theta|^{\frac{\alpha}{2}-\varepsilon}\frac{1}{r^{1+\alpha}}\,\mathrm{d}r\tau(\mathrm{d}\theta)\\
&=b^{\varphi}_{+}b^{\psi}_{+}\int_{0}^{\infty}\int_{0}^{\frac{\pi}{2}}1_{r\cos\theta>1,r\sin\theta>1} |\cos\theta\sin\theta|^{\frac{\alpha}{2}-\varepsilon}\frac{1}{r^{1+2\varepsilon}}\,\mathrm{d}r\tau(\mathrm{d}\theta)\\
&=\frac{1}{2\varepsilon}b^{\varphi}_{+}b^{\psi}_{+}\int_{0}^{\frac{\pi}{2}} |\cos\theta\sin\theta|^{\frac{\alpha}{2}-\varepsilon}(\min(|\cos\theta|,|\sin\theta|))^{2\varepsilon}\,\tau(\mathrm{d}\theta).
\end{align*}
Similarly, we have that
\begin{equation*}
L_{+-}=\frac{1}{2\varepsilon}b^{\varphi}_{+}b^{\psi}_{-}\int_{\frac{3\pi}{2}}^{2\pi} |\cos\theta\sin\theta|^{\frac{\alpha}{2}-\varepsilon}(\min(|\cos\theta|,|\sin\theta|))^{2\varepsilon}\,\tau(\mathrm{d}\theta),
\end{equation*}
\begin{equation*}
L_{-+}=\frac{1}{2\varepsilon}b^{\varphi}_{-}b^{\psi}_{+}\int_{\frac{\pi}{2}}^{\pi} |\cos\theta\sin\theta|^{\frac{\alpha}{2}-\varepsilon}(\min(|\cos\theta|,|\sin\theta|))^{2\varepsilon}\,\tau(\mathrm{d}\theta),
\end{equation*}
and
\begin{equation*}
L_{--}=\frac{1}{2\varepsilon}b^{\varphi}_{-}b^{\psi}_{-}\int_{\pi}^{\frac{3\pi}{2}} |\cos\theta\sin\theta|^{\frac{\alpha}{2}-\varepsilon}(\min(|\cos\theta|,|\sin\theta|))^{2\varepsilon}\,\tau(\mathrm{d}\theta).
\end{equation*}
Moreover, we find that
\begin{equation*}
\int_{\mathbb{R}^2}(\varphi(x))^2\,\nu(\mathrm{d}x,\mathrm{d}y) =D^{x}_{+}F_{+}+D^{x}_{-}F_{-} =\frac{1}{2\varepsilon}D^{x}_{+}(b^{\varphi}_{+})^2+\frac{1}{2\varepsilon}D^{x}_{-}(b^{\varphi}_{-})^2.
\end{equation*}
and
\begin{equation*}
\int_{\mathbb{R}^2}(\psi(y))^2\,\nu(\mathrm{d}x,\mathrm{d}y)   =\frac{1}{2\varepsilon}D^{y}_{+}(b^{\psi}_{+})^2+\frac{1}{2\varepsilon}D^{y}_{-}(b^{\psi}_{-})^2.
\end{equation*}
We chose $b^{\varphi}_{+},b^{\varphi}_{-},b^{\psi}_{+}$ and $b^{\psi}_{-}$ such that
\begin{multline*}
(\sqrt{D^{x}_{+}}b^{\varphi}_{+},\sqrt{D^{x}_{-}}b^{\varphi}_{-})A\begin{pmatrix}
\sqrt{D^{y}_{+}}b^{\psi}_{+}\\
\sqrt{D^{y}_{-}}b^{\psi}_{-}
\end{pmatrix}\\
=\|A\|_2\sqrt{(D^{x}_{+}(b^{\varphi}_{+})^2+D^{x}_{-}(b^{\varphi}_{-})^2) (D^{y}_{+}(b^{\psi}_{+})^2+D^{y}_{-}(b^{\psi}_{-})^2)}.
\end{multline*}
Then we get that
\[\mathrm{Op}(\nu)\geq\lim_{\varepsilon\to 0}\frac{\int_{\mathbb{R}^2}\varphi(x)\psi(y)\,\nu(\mathrm{d}x,\mathrm{d}y)}{\sqrt{\int_{\mathbb{R}^2}(\varphi(x))^2\,\nu(\mathrm{d}x,\mathrm{d}y)\int_{\mathbb{R}^2}(\psi(y))^2\,\nu(\mathrm{d}x,\mathrm{d}y)}} =\|A\|_2.\]
\end{proof}
Currently, we have found an expression for $R((X_t)_{t\geq 0},(Y_t)_{t\geq 0})$. Clearly, we have that
\[R(X_1,Y_1)\leq R((X_t)_{t\geq 0},(Y_t)_{t\geq 0}).\]
On the other hand, consider the random walk $(S_n,T_n)_{n\geq 0}$ such that its increment $(S_{n+1}-S_{n},T_{n+1}-T_{n})$ has the same distribution as $(X_1,Y_1)$. By Theorem~\ref{thm: random walks}, we have that
\[R((S_n)_{n\geq 0},(T_n)_{n\geq 0})=R(X_1,Y_1).\]
Since $(X_1,Y_1)$ is stable, there exists $(c_n,d_n)\in\mathbb{R}$ such that $(S_n-c_n,T_n-d_n)$ has the same distribution as $n^{1/\alpha}(X_1,Y_1)$. Define
\[(X^{(n)}_t,Y^{(n)}_t)=\left(\frac{S_{[nt]}-c_{[nt]}}{n^{1/\alpha}},\frac{T_{[nt]}-d_{[nt]}}{n^{1/\alpha}}\right).\]
Then $R(X^{(n)},Y^{(n)})=R(S,T)=R(X_1,Y_1)$. Moreover, by \cite[Theorem~23.14]{KallenbergMR4226142}, the stochastic process $(X^{(n)}_t,Y^{(n)}_t)_{t\geq 0}$ converges towards $(X_t,Y_t)_{t\geq 0}$ in the Skorokhod space $D_{\mathbb{R}^2}[0,\infty)$. Hence, by Lemma~\ref{lem: lower semi-continuity}, we have that
\[R((X_t)_{t\geq 0},(Y_t)_{t\geq 0})\leq\liminf_{n\to\infty}R((X^{(n)}_t)_{t\geq 0},(Y^{(n)}_t)_{t\geq 0})=R(X_1,Y_1).\]
Finally, we find that $R(X_1,Y_1)=R((X_t)_{t\geq 0},(Y_t)_{t\geq 0})=\mathrm{Op}(\nu)$.

From the above argument, we have the following observation.
\begin{remark}\label{rem: doc lower bound}
Suppose that $(X,Y)$ is a stable random vector. Let $(\widetilde{X},\widetilde{Y})$ be in the domain of attraction of $(X,Y)$. Then we have that
\[R(\widetilde{X},\widetilde{Y})\geq R(X,Y),\]
where $R(X,Y)$ is given in Theorem~\ref{thm: stable}.
\end{remark}

\subsection{Examples}
Firstly, we consider an example studied in \cite{BrycDemboKaganMR2141340}. They proved the following theorem:
\begin{theorem}[Bryc-Dembo-Kagan]
Let $X$ and $Z$ be independent copies of $\alpha$-stable random variables with $0<\alpha\leq 2$. Then for all $\lambda\geq 0$, we have
\begin{equation}\label{eq: R X X lambda Z}
R(X,X+\lambda Z)=\frac{1}{\sqrt{1+|\lambda|^{\alpha}}}.
\end{equation}
If $X$ and $Z$ are symmetric, then the above inequality \eqref{eq: R X X lambda Z} holds for $\lambda<0$.
\end{theorem}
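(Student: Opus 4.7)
The plan is to apply Theorem~\ref{thm: stable} directly to the bivariate stable vector $(X, X+\lambda Z)$. First, I would dispose of the Gaussian case $\alpha = 2$: a linear combination of independent Gaussians is Gaussian, so $(X, X+\lambda Z)$ is jointly Gaussian and the identity \eqref{eq: jointly Gaussian} gives $R(X, X+\lambda Z) = |\rho(X, X+\lambda Z)| = 1/\sqrt{1+\lambda^2}$. Henceforth assume $0 < \alpha < 2$.

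Write the univariate L\'evy measure of $X$ as $\nu_X(\mathrm{d}x) = c_+ x^{-1-\alpha} 1_{x>0}\,\mathrm{d}x + c_- |x|^{-1-\alpha} 1_{x<0}\,\mathrm{d}x$ for some $c_+, c_- \geq 0$ (not both zero, by non-degeneracy), and the same for $Z$. By independence, the bivariate L\'evy measure $\nu$ of $(X, X+\lambda Z)$ is the pushforward of $\nu_X$ along $x \mapsto (x, x)$ plus the pushforward of $\nu_Z$ along $z \mapsto (0, \lambda z)$. Hence $\nu$ is concentrated on the two lines $y = x$ and $x = 0$, and in polar form \eqref{eq: stable Levy measure spectral measure} the spectral measure $\tau$ is supported on at most four angles. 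A routine change of variables ($x = r/\sqrt{2}$ on the diagonal, $z = r/|\lambda|$ on the vertical axis) gives, for $\lambda > 0$,
\[
\tau(\{\pi/4\}) = c_+ 2^{\alpha/2}, \quad \tau(\{5\pi/4\}) = c_- 2^{\alpha/2}, \quad \tau(\{\pi/2\}) = c_+ \lambda^\alpha, \quad \tau(\{3\pi/2\}) = c_- \lambda^\alpha.
\]

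Next, I would substitute into Lemma~\ref{lem: stable Op nu}. At the vertical angles $\theta \in \{\pi/2, 3\pi/2\}$ the factor $|\cos\theta\sin\theta|^{\alpha/2}$ vanishes, so only the diagonal angles $\pi/4, 5\pi/4$ contribute to the $C$'s, giving $C_{++} = c_+$, $C_{--} = c_-$, and $C_{+-} = C_{-+} = 0$. Likewise $D^{x}_{\pm} = c_{\pm}$ (since $\cos(\pi/2) = 0$ kills the vertical contribution to $D^{x}$), while for $D^{y}_{\pm}$ both angles contribute and one finds $D^{y}_{\pm} = c_{\pm}(1 + \lambda^\alpha)$. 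Therefore the matrix in Theorem~\ref{thm: stable} is diagonal with both nonzero diagonal entries equal to $1/\sqrt{1+\lambda^\alpha}$; its spectral norm is $1/\sqrt{1+\lambda^\alpha}$, as claimed.

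For the symmetric case with $\lambda < 0$: the pushforward of $\nu_Z$ now sends positive $z$ to the lower half-plane and negative $z$ to the upper half-plane, so the roles of $\pi/2$ and $3\pi/2$ swap, producing masses $c_-|\lambda|^\alpha$ at $\pi/2$ and $c_+|\lambda|^\alpha$ at $3\pi/2$. Under the symmetry $c_+ = c_-$, however, this swap leaves $D^{y}_{\pm}$ unchanged (with $\lambda^\alpha$ replaced by $|\lambda|^\alpha$), and the conclusion is identical. The only subtlety is the degenerate case $c_+ = 0$ or $c_- = 0$ (one-sided stable laws): one diagonal entry then becomes $0/0 = 0$ by convention, but the other remains $1/\sqrt{1+|\lambda|^\alpha}$, so the spectral norm is unchanged. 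The main obstacle is thus simply the careful bookkeeping of the change of variables producing $\tau$, after which Theorem~\ref{thm: stable} does all the work.
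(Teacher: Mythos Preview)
Your proposal is correct and follows exactly the method the paper uses. The paper does not actually give its own proof of the Bryc--Dembo--Kagan theorem (it is cited from \cite{BrycDemboKaganMR2141340}), but immediately afterward it proves the companion Proposition for $\lambda<0$ by precisely your route: compute the spectral measure $\tau$ of $(X,X+\lambda Z)$, read off $C_{\pm\pm}$ and $D^{x}_{\pm},D^{y}_{\pm}$, and apply Theorem~\ref{thm: stable}. Your computation for $\lambda>0$ is the mirror image of the paper's computation for $\lambda<0$, with the masses at $\pi/2$ and $3\pi/2$ swapped; the numbers match. The only difference is in handling the symmetric case with $\lambda<0$: the paper observes that replacing $Z$ by $-Z$ (same law, by symmetry) reduces this to the $\lambda>0$ case in one line, whereas you redo the $D^{y}_{\pm}$ computation under $c_{+}=c_{-}$. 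Both are fine.
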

For $\alpha=2$, $X$ and $Z$ must be Gaussian, and the result goes back to Lancaster \cite{Lancaster}. The main contribution is in the case that $0<\alpha<2$, $\lambda\geq 0$, and $X$ are $\alpha$-stable. The result for $\lambda<0$ and symmetric $\alpha$-stable random variables can easily be deduced from \eqref{eq: R X X lambda Z} by taking $-Z$ instead of $Z$. The restriction to positive $\lambda$ in \eqref{eq: R X X lambda Z} has a reason. In general, the expression for $\lambda<0$ is different. Indeed, we have the following result:
\begin{proposition}
Let $X$ and $Z$ be independent copies of $\alpha$-stable random variables with $0<\alpha<2$. Then the L\'{e}vy measure $\nu_{X}(x)\mathrm{d}x$ of $X$ has the following form:
\[\nu_{X}(x)\mathrm{d}x=\frac{c_{-}}{|x|^{1+\alpha}}1_{x<0}\,\mathrm{d}x+\frac{c_{+}}{|x|^{1+\alpha}}1_{x>0}\,\mathrm{d}x.\]
For all $\lambda<0$, we have
\begin{equation}
R(X,X+\lambda Z)=1/\sqrt{1+\frac{\min(c_{-},c_{+})}{\max(c_{-},c_{+})}|\lambda|^{\alpha}}.
\end{equation}
\end{proposition}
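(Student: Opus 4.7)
The plan is to treat $(X, X+\lambda Z)$ as a two-dimensional $\alpha$-stable random vector and invoke Theorem~\ref{thm: stable} via Lemma~\ref{lem: stable Op nu}. Since $X$ and $Z$ are independent and $\alpha$-stable, the vector $(X, X+\lambda Z)$ is jointly $\alpha$-stable, so the task reduces to computing its spectral measure $\tau$ and evaluating the eight integrals $C_{\pm\pm}$ and $D^{x/y}_{\pm}$.

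First I would determine the bivariate L\'{e}vy measure $\nu$ of $(X, X+\lambda Z)$. By independence, $\nu$ is the sum of the pushforward of $\nu_X$ under $x \mapsto (x,x)$ and the pushforward of $\nu_Z$ under $z \mapsto (0, \lambda z)$. The first piece is supported on the diagonal $\{y = x\}$ (angular directions $\theta = \pi/4$ and $\theta = 5\pi/4$), while the second is supported on the vertical axis $\{x=0\}$ (directions $\theta = \pi/2$ and $\theta = 3\pi/2$). A polar change of variables yields the four-atom spectral measure
\[
\tau = c_+ 2^{\alpha/2}\,\delta_{\pi/4} + c_- 2^{\alpha/2}\,\delta_{5\pi/4} + |\lambda|^\alpha c_-\,\delta_{\pi/2} + |\lambda|^\alpha c_+\,\delta_{3\pi/2}.
\]
The assignment of $c_-$ to $\delta_{\pi/2}$ and $c_+$ to $\delta_{3\pi/2}$ is where the hypothesis $\lambda<0$ enters: for $\lambda<0$, positive jumps of $\lambda Z$ come from negative jumps of $Z$ and therefore inherit the weight $c_-$, which is the opposite of the pairing that occurs for $\lambda>0$ in the Bryc--Dembo--Kagan setting.

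Plugging this $\tau$ into Lemma~\ref{lem: stable Op nu}, the atoms at $\pi/2$ and $3\pi/2$ contribute nothing to any $C_{\pm\pm}$ (because $\cos$ vanishes there), and the diagonal atoms fall outside the angular sectors defining $C_{+-}$ and $C_{-+}$. One finds $C_{+-} = C_{-+} = 0$, $C_{++} = c_+$, $C_{--} = c_-$, together with $D^x_\pm = c_\pm$, $D^y_+ = c_+ + |\lambda|^\alpha c_-$, and $D^y_- = c_- + |\lambda|^\alpha c_+$. The matrix in Theorem~\ref{thm: stable} is thus diagonal with entries $(1 + |\lambda|^\alpha c_-/c_+)^{-1/2}$ and $(1 + |\lambda|^\alpha c_+/c_-)^{-1/2}$, whose spectral norm is the larger of the two; since the larger entry corresponds to the smaller ratio in the denominator, this equals $1/\sqrt{1 + |\lambda|^\alpha \min(c_-,c_+)/\max(c_-,c_+)}$, as claimed.

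The main obstacle is the sign bookkeeping in the second paragraph: once $\tau$ is correctly identified, the remaining steps are short and amount to the trivial identities $\cos(\pi/4)=\sin(\pi/4)=1/\sqrt{2}$ and $\cos(\pi/2)=0$. In particular, there is no need to redo any Hilbert--Hardy-type analysis; the full strength of Lemma~\ref{lem: stable Op nu} is what enables this direct reduction, and it also explains transparently why the formulas for $\lambda>0$ and $\lambda<0$ must differ whenever $c_+\neq c_-$.
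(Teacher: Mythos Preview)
Your proposal is correct and follows essentially the same approach as the paper: both identify $(X,X+\lambda Z)$ as a bivariate $\alpha$-stable vector, compute its spectral measure as the four-atom measure you wrote (the paper obtains it via the L\'{e}vy measure of the pair, arriving at the identical $\tau$), and then plug into Theorem~\ref{thm: stable} to get the diagonal $2\times 2$ matrix whose spectral norm yields the stated formula. Your explicit explanation of why $\lambda<0$ swaps the weights $c_\pm$ on the vertical atoms is a nice addition that the paper leaves implicit.
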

The proof is based on Theorem~\ref{thm: stable} and the fact that $(X,X+\lambda Z)$ is stable.
\begin{proof}
By using characteristic functions and independence between $X$ and $Z$, we find that $(X,Y)=(X,X+\lambda Z)$ is an $\alpha$-stable random vector with L\'{e}vy measure $\nu(\mathrm{d}x,\mathrm{d}y)$, where
\begin{equation}
\nu(\mathrm{d}x,\mathrm{d}y)=\frac{1}{\lambda}\nu_{X}\left(\frac{y}{\lambda}\right)\delta_{0}(\mathrm{d}x)\mathrm{d}y+\nu_{X}(x)\mathrm{d}x\delta_{x}(\mathrm{d}y).
\end{equation}
If we write $\nu(\mathrm{d}x,\mathrm{d}y)$ in the form \eqref{eq: stable Levy measure spectral measure}, then we have
\[\tau(\mathrm{d}\theta)=c_{-}|\lambda|^{\alpha}\delta_{\frac{\pi}{2}}(\mathrm{d}\theta) +c_{+}|\lambda|^{\alpha}\delta_{\frac{3\pi}{2}}(\mathrm{d}\theta) +c_{+}\sqrt{2}^{\alpha}\delta_{\frac{\pi}{4}}(\mathrm{d}\theta) +c_{-}\sqrt{2}^{\alpha}\delta_{\frac{5\pi}{4}}(\mathrm{d}\theta).\]
Recall the notation in the statement of Theorem~\ref{thm: stable}. In our case, we have that
\[C_{++}=c_{+},C_{--}=c_{-},C_{+-}=C_{-+}=0,\]
\[D^{x}_{+}=c_{+},D^{x}_{-}=c_{-}, D^{y}_{+}=c_{+}+c_{-}|\lambda|^{\alpha}, D^{y}_{-}=c_{-}+c_{+}|\lambda|^{\alpha}.\]
Hence, by Theorem~\ref{thm: stable}, we have that
\begin{align*}
R(X,Y)&=\left\|\begin{pmatrix}
\sqrt{\frac{c_{+}}{c_{+}+c_{-}|\lambda|^{\alpha}}} &\quad 0\\
0 &\quad \sqrt{\frac{c_{-}}{c_{-}+c_{+}|\lambda|^{\alpha}}}
\end{pmatrix}\right\|_2\\
&=\max\left(\sqrt{\frac{c_{+}}{c_{+}+c_{-}|\lambda|^{\alpha}}},\sqrt{\frac{c_{-}}{c_{-}+c_{+}|\lambda|^{\alpha}}}\right)\\
&=1/\sqrt{1+\frac{\min(c_{-},c_{+})}{\max(c_{-},c_{+})}|\lambda|^{\alpha}}.
\end{align*}
\end{proof}

Secondly, we consider compound Poisson processes.
\begin{example}\label{exam: generic}
Consider $b=0$, $\Sigma=0$ and a probability measure $\nu(\mathrm{d}x,\mathrm{d}y)$ in \eqref{eq: Levy-Khinchine formula}. Suppose that
\[\nu(\{(x,y)\in\mathbb{R}^2\setminus\{(0,0)\}:x=0\text{ or }y=0\})=0.\]
Let $(X_t,Y_t)$ be the corresponding L\'{e}vy process. Then we have that
\[R((X_t)_{t\geq 0},(Y_t)_{t\geq 0})=\mathrm{Op}(\nu)=1.\]
The reason is that the number of jumps $N$ of $(X_t)_{t\geq 0}$ up to time $1$ is equal to that of $(Y_t)_{t\geq 0}$. Hence, $R((X_t)_{t\geq 0},(Y_t)_{t\geq 0})\geq R(N,N)=1$. So, for a generic compound Poisson process, the maximal correlation coefficient $R((X_t)_{t\geq 0},(Y_t)_{t\geq 0})$ is equal to $1$.
\end{example}

Finally, we show that the maximal correlation coefficient $R(X_1,Y_1)$ could be strictly less than $R((X_t)_{t\geq 0},(Y_t)_{t\geq 0})$.
\begin{example}\label{exam: 3}
Let $(B_t)_{t\geq 0}$ be a (one-dimensional) Brownian motion. Let $(N_t)_{t\geq 0}$ be a Poisson process of rate $1$ that is independent of $(B_t)_{t\geq 0}$. Let $X_t=B_t+N_t$. Then $(X_t,N_t)_{t\geq 0}$ is a two-dimensional L\'{e}vy process. By Theorem~\ref{thm: Levy}, we have that $R((X_t)_{t\geq 0},(N_t)_{t\geq 0})\geq \mathrm{Op}(\nu)=R((N_t)_{t\geq 0},(N_t)_{t\geq 0})=1$. Since the maximal correlation coefficient is bounded by $1$ from above, we have that $R((X_t)_{t\geq 0},(N_t)_{t\geq 0})=1$. However, $R(X_1,N_1)<1$. We prove this by contradiction. Suppose $R(X_1,N_1)=1$. Let \[f(x,n)\,\mathrm{d}x=P(X_1\in(x,x+\mathrm{d}x),N_1=n).\]
Then we have that
\[f(x,n)=\frac{e^{-1}}{n!}\frac{1}{\sqrt{2\pi}}e^{-(x-n)^2/2}.\]
Let $f_{X_1}(x)$ be the marginal density of $X_1$ and $f_{N_1}(n)=P(N_1=n)=\frac{e^{-1}}{n!}$. Define
\[k(x,n)=\frac{f(x,n)}{f_{X_1}(x)f_{N_1}(n)}\]
and
\begin{align*}
C^2&=\int_{-\infty}^{\infty}\sum_{n=0}^{\infty}(k(x,n)-1)^2f_{X_1}(x)f_{N_1}(n)\,\mathrm{d}x\\ &=\int_{-\infty}^{\infty}\sum_{n=0}^{\infty}\frac{f(x,n)^2}{f_{X_1}(x)f_{N_1}(n)}\,\mathrm{d}x-1\\ &=\int_{-\infty}^{\infty}\sum_{n=0}^{\infty}f_{N_1|X_1}(n|x)f_{X_1|N_1}(x|n)\,\mathrm{d}x-1,
\end{align*}
where $f_{N_1|X_1}(n|x)=P(N_1=n|X_1=x)$ and $f_{X_1|N_1}(x|n)$ is the conditional density of $X_1$ given $N_1=n$. Here, $C$ is called the \emph{mean square contingency} of $X_1$ and $N_1$. We will show that $C<\infty$ as follows: Note that
\begin{equation}\label{eq: f X1 N1 x n}
f_{X_1|N_1}(x|n)=\frac{1}{\sqrt{2\pi}}e^{-(x-n)^2/2}.
\end{equation}
Note that
\[f_{N_1|X_1}(n|x)=\frac{f(x,n)}{f_{X_1}(x)}=\frac{1}{f_{X_1}(x)}P(N_1=n)\frac{1}{\sqrt{2\pi}}e^{-(x-n)^2/2},\] \[f_{X_1}(x)=\sum_{n=0}^{\infty}f(x,n)=\sum_{n=0}^{\infty}P(N_1=n)\frac{1}{\sqrt{2\pi}}e^{-(x-n)^2/2}.\]
For $x\leq 0$, we have that
\begin{equation*}
f_{X_1}(x)\geq P(N_1=0)\frac{1}{\sqrt{2\pi}}e^{-x^2/2}=\frac{e^{-1}}{\sqrt{2\pi}}e^{-x^2/2}.
\end{equation*}
Hence, for $x\leq 0$, we have that
\begin{equation}\label{eq: f N1 X1 n x x<0}
f_{N_1|X_1}(n|x)\leq\frac{1}{n!}e^{-n^2/2-n|x|}.
\end{equation}
Hence, by \eqref{eq: f X1 N1 x n} and \eqref{eq: f N1 X1 n x x<0}, we see that
\[\int_{-\infty}^{0}\sum_{n=0}^{\infty}f_{N_1|X_1}(n|x)f_{X_1|N_1}(x|n)\,\mathrm{d}x<\infty.\]
It remains to prove that
\begin{equation}\label{eq: msc bound positive x}
\int_{0}^{\infty}\sum_{n=0}^{\infty}f_{N_1|X_1}(n|x)f_{X_1|N_1}(x|n)\,\mathrm{d}x<\infty.
\end{equation}
Hence, we assume that $x>0$ in the following. For fixed $x\geq 0$, let
\[g(x)=\max_{n\geq 0}f(x,n)=\max_{n\geq 0}P(N_1=n)\frac{1}{\sqrt{2\pi}}e^{-(x-n)^2/2}.\]
Then there exists $c>0$ such that for $x\geq e$, we have that
\[g(x)\geq f(x,\lceil x-\log x\rceil)\geq\frac{e^{-1}}{\sqrt{2\pi}}\frac{1}{\lceil x-\log x\rceil!}e^{-(\log x)^2/2}\geq \frac{c}{\lceil x\rceil^{10}}\frac{1}{\lceil x\rceil!}e^{(\log \lceil x\rceil)^2/2},\]
where $\lceil x\rceil$ is the least integer that is greater than or equal to $x$. Then
\[f_{X_1}(x)=\sum_{n=0}^{\infty}f(x,n)\geq\max_{n\geq 0}f(x,n)=g(x).\]
And for $x\geq e$, we have that
\[f_{N_1|X_1}(n|x)\leq\frac{1}{g(x)}\frac{e^{-1}}{n!}\frac{1}{\sqrt{2\pi}}e^{-(x-n)^2/2}\leq \frac{1}{c}(x+1)^{10}\frac{\lceil x\rceil!}{n!}e^{-(x-n)^2/2-(\log\lceil x\rceil)^2/2}.\]
Note that $\lceil x\rceil!/n!\leq 1$ for $n\geq \lceil x\rceil$ and $\lceil x\rceil!/n!\leq \lceil x\rceil^{\lceil x\rceil-n}$ for $0\leq n\leq\lceil x\rceil$. Hence, there exists $c>0$ and $C<\infty$ such that for $x>0$ and $|n-\lceil x\rceil|\leq \log\lceil x\rceil/10$,
\begin{equation}\label{eq: non-trivial bound for f N1 X1 n x}
f_{N_1|X_1}(n|x)\leq Ce^{-c(\log\lceil x\rceil)^2}.
\end{equation}
Besides, we have the trivial bound
\begin{equation}\label{eq: trivial bound for f N1 X1 n x}
f_{N_1|X_1}(n|x)=P(N_1=n|X_1=x)\leq 1\text{ for }|n-\lceil x\rceil|>\log\lceil x\rceil/10.
\end{equation}
Combining \eqref{eq: f X1 N1 x n}, \eqref{eq: non-trivial bound for f N1 X1 n x} and \eqref{eq: trivial bound for f N1 X1 n x}, we get \eqref{eq: msc bound positive x}. Hence, the mean-square contingency $C$ is finite. By \cite[Theorem~2]{RenyiMR0115203}, there exist non-degenerate $\varphi(X_1)$ and $\psi(N_1)$ such that $\rho(\varphi(X_1),\psi(N_1))=R(X_1,N_1)=1$. Therefore, there exist $c_1\neq 0,c_2\neq 0$ and $d\in\mathbb{R}$ such that $P(c_1\varphi(X_1)+c_2\psi(N_1)=d)=1$. However, given the value of $N_1$, $X_1$ could take any value in $\mathbb{R}$. If $P(c_1\varphi(X_1)+c_2\psi(N_1)=d)=1$, then $\varphi$ is constant almost everywhere, which contradicts with the non-degeneracy of $\varphi(X_1)$. Finally, by contradiction, we prove that $R(X_1,N_1)<1$.
\end{example}
For a compound Poisson process, it is still possible that $R(X_1,Y_1)<R((X_t)_{t\geq 0},(Y_t)_{t\geq 0})$, see the following example:
\begin{example}\label{exam: 4}
Let $(M_t)_{t\geq 0}$ and $(N_t)_{t\geq 0}$ be two independent Poisson processes with rate $1$. Let $X_t=M_t-N_t$ and $Y_t=M_t$. Then $(X_t,Y_t)_{t\geq 0}$ is a two-dimensional compound Poisson process. We have that $R((X_t)_{t\geq 0},(Y_t)_{t\geq 0})\geq \mathrm{Op}(\nu)=1$. Since the maximal correlation coefficient is at most one, we have that $R((X_t)_{t\geq 0},(Y_t)_{t\geq 0})=1$. However, we find that $R(X_1,Y_1)<1$ by numerical methods, although we do not have a theoretical proof at present. Indeed, let $X^{(n)}_1=\min(\max(X_1,-n),n)$ and $Y^{(n)}_1=\min(\max(Y_1,-n),n)$. Since $X^{(n)}_1$ is a measurable function of $X_1$ and $Y^{(n)}_1$ is a measurable function of $Y_1$, we have that $R(X^{(n)}_1,Y^{(n)}_1)\leq R(X_1,Y_1)$. On the other hand, since $(X^{(n)}_1,Y^{(n)}_1)$ converges to $(X_1,Y_1)$ in distribution, by Lemma~\ref{lem: lower semi-continuity}, we have that $R(X,Y)\leq\liminf_{n\to\infty}R(X^{(n)}_1,Y^{(n)}_1)$. Therefore, we must have
\[R(X,Y)=\lim_{n\to\infty}R(X^{(n)}_1,Y^{(n)}_1).\]
Since $(X^{(n)}_1,Y^{(n)}_1)$ takes values in a finite set, the maximal correlation $R(X^{(n)}_1,Y^{(n)}_1)$ is given by a certain eigenvalue of a finite matrix, which could be found by numerical methods. By numerical calculations, we find that $R(X,Y)=\lim_{n\to\infty}R(X^{(n)}_1,Y^{(n)}_1)$ is approximately $0.8321$.
\end{example}

\section{Analogs and generalizations of DKS inequality}\label{sect: MB like}

\subsection{Proof of the lower bound in Theorem~\ref{thm: R X (T,X_T)}}\label{subsect: proof of MB lower bound}
The lower bound
\[R((X_1,X_2,\ldots,X_n),(Y_1,Y_2,\ldots,Y_n))\geq\sqrt{\max\{P(i\in T):i\in[n]\}}\]
is a simple consequence of
\[R(X_i,Y_i)=\sqrt{P(i\in T)},\]
which follows from Lemma~\ref{lem: R(X,XB)} below.

Let $X$ be a non-degenerate random variable taking values in a general measurable space $(F,\mathcal{F})$. Let $B$ be a Bernoulli random variable independent of $X$. Assume that
\[P(B=1)=1-P(B=0)=p.\]
Suppose $\partial$ is a special point outside of $F$. Define
\[Y=\left\{\begin{array}{ll}
X, & \text{if }B=1,\\
\partial, & \text{if }B=0.
\end{array}\right.\]
\begin{lemma}\label{lem: R(X,XB)}
The maximal correlation coefficient between $X$ and $Y$ is equal to $\sqrt{p}$.
\end{lemma}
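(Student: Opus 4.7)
I would use the operator-theoretic characterization \eqref{eq: alt defn R}, namely $R^2(X,Y)=\sup\{E[(E[\varphi(X)\mid Y])^2]:E[\varphi(X)]=0,\ E[\varphi^2(X)]=1\}$. The central observation is that $Y$ determines the Bernoulli switch $B$ exactly, since $\partial\notin F$: on the event $\{Y=\partial\}$ one has $B=0$, and on the event $\{Y\in F\}$ one has $B=1$ with $X=Y$.

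First I would compute the conditional expectation $E[\varphi(X)\mid Y]$ for any measurable $\varphi$ with $E[\varphi(X)]=0$. On $\{Y=\partial\}$ (an event of probability $1-p$), the conditioning reduces to conditioning on $\{B=0\}$; since $B$ is independent of $X$, this equals $E[\varphi(X)]=0$. On $\{Y\in F\}$, we know both $B=1$ and $X=Y$, so $E[\varphi(X)\mid Y]=\varphi(Y)=\varphi(X)$. Hence, almost surely,
\[
E[\varphi(X)\mid Y]=\varphi(X)\,\mathbf{1}_{B=1}.
\]

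Next I would square and take expectations, using the independence of $B$ and $X$:
\[
E\bigl[(E[\varphi(X)\mid Y])^2\bigr]=E[\varphi(X)^2\mathbf{1}_{B=1}]=P(B=1)\,E[\varphi(X)^2]=p.
\]
Since this value is the same for every admissible $\varphi$ (with $E[\varphi^2(X)]=1$), taking the supremum gives $R^2(X,Y)=p$, hence $R(X,Y)=\sqrt{p}$.

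There is essentially no obstacle here: the proof hinges on the single fact that, because $\partial$ lies outside $F$, the sigma-field $\sigma(Y)$ contains $\sigma(B)$ and agrees with $\sigma(X,B)$ on $\{B=1\}$. The only subtlety to be careful about is phrasing the conditional expectation in a measurable way (one may argue via disintegration, or simply note that $E[\varphi(X)\mid Y]=\mathbf{1}_{Y\in F}\,\varphi(Y)+\mathbf{1}_{Y=\partial}\cdot 0$ is a measurable function of $Y$); the independence of $B$ and $X$ then makes the resulting computation immediate.
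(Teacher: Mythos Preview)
Your argument is correct and is in fact slicker than the paper's. The paper works directly with the correlation definition \eqref{eq: defn R}: for arbitrary centered $f(X)$ and $g(Y)$ it computes $\Cov(f(X),g(Y))=pE[f(X)g(X)]$ and $\Var(g(Y))=pE[g(X)^2]+(1-p)g(\partial)^2$, then drops the $(1-p)g(\partial)^2$ term and applies Cauchy--Schwarz to obtain $\rho(f(X),g(Y))\le\sqrt{p}$; a separate step exhibits a specific pair $f=g$ with $g(\partial)=0$ achieving equality. Your route via \eqref{eq: alt defn R} bypasses both Cauchy--Schwarz and the explicit lower-bound construction, because once you identify $E[\varphi(X)\mid Y]=\varphi(X)\mathbf{1}_{B=1}$ the value $E[(E[\varphi(X)\mid Y])^2]=p$ is constant over all admissible $\varphi$, so the supremum is trivially attained. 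The paper's approach has the minor expository advantage of making visible exactly where the condition $\partial\notin F$ enters on the lower-bound side (Remark~\ref{rem: partial}), whereas in your argument it enters earlier, in the identification $\{B=1\}=\{Y\in F\}$ needed to make $\varphi(X)\mathbf{1}_{B=1}$ a measurable function of $Y$.
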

\begin{proof}
To calculate the maximal correlation coefficient between $X$ and $Y$, we take two functions $f$ and $g$ such that $E[f(X)]=0$, $\Var(f(X))<\infty$, $E[g(Y)]=0$ and $\Var(g(Y))<\infty$. Then we have that
\begin{equation}\label{eq: n=1 Var g(Y)}
\Var(g(Y))=E[(g(Y))^2]=E[E[(g(Y))^2|B]]=pE[(g(X))^2]+(1-p)(g(\partial))^2.
\end{equation}
We calculate the covariance between $f(X)$ and $g(Y)$ by taking the expectation conditionally on $B$:
\begin{align*}
\Cov(f(X),g(Y))&=E[f(X)g(Y)]\notag\\
&=E[E[f(X)g(Y)|B]]\notag\\
&=pE[f(X)g(X)]+(1-p)g(\partial)E[f(X)]\notag\\
&=pE[f(X)g(X)],
\end{align*}
where the first and the last inequalities are due to $E[f(X)]=0$. Combining previous results, we get that
\begin{align}\label{eq: n=1 rho f(X) g(Y)}
\rho(f(X),g(Y))&=\frac{\Cov(f(X),g(Y))}{\sqrt{\Var(f(X))}\sqrt{\Var(g(Y))}}\notag\\
&=\frac{pE[f(X)g(X)]}{\sqrt{E[(f(X))^2]}\sqrt{(1-p)(g(\partial))^2+pE[(g(X))^2]}}\\
&\leq\frac{pE[f(X)g(X)]}{\sqrt{E[(f(X))^2]}\sqrt{pE[(g(X))^2]}}\notag\\
&\leq\sqrt{p}\notag,
\end{align}
where we use the Cauchy-Schwarz inequality $(E[f(X)g(X)])^2\leq E[(f(X))^2]E[(g(X))^2]$ in the last step. Thus, we have proved that $R(X,Y)\leq\sqrt{p}$.

Finally, there exists $g$ such that
\[g(\partial)=0, E[g(X)|X\neq\partial]=0,\text{ and }0<E[(g(X))^2|X\neq\partial]<\infty.\]
Take $f=g$. Then
\[E[f(X)]=0\text{ and }0<\Var(f(X))=E[(g(X))^2]<\infty.\]
Moreover, $E[g(Y)]=pE[g(X)]=0$ and $0<\Var(g(Y))<\infty$ by \eqref{eq: n=1 Var g(Y)}. By \eqref{eq: n=1 rho f(X) g(Y)}, we have that
\[R(X,Y)\geq\rho(f(X),g(Y))=\sqrt{p},\]
and the proof is complete.
\end{proof}

\begin{remark}\label{rem: partial}
In the above argument, we need the condition that $\partial$ is outside of $F$ to ensure that the distribution of $X$ is non-degenerate conditionally on $X\neq\partial$, which guarantees the existence of $g$ such that $g(\partial)=0$, $E[g(X)|X\neq\partial]=0$ and $0<E[(g(X))^2|X\neq\partial]<\infty$. It is possible to allow $P(X=\partial)>0$. However, in such cases, we need to assume that the distribution of $X$ is non-degenerate conditionally on $X\neq\partial$. Otherwise, the formula is no longer correct. For example, we take $B$ to be a Bernoulli random variable with parameter $p=1/2$. Let $X$ be an independent copy of $B$. Take $\partial=0$. Then $Y=BX$. Note that $(X,Y)$ is a bivariate Bernoulli vector. By \eqref{eq: bivariate Bernoulli}, we find that $R(X,Y)=1/\sqrt{3}<\sqrt{p}=1/\sqrt{2}$.
\end{remark}

\subsection{Proof of Theorem~\ref{thm: R (S,X_S) (T,X_T)}}
In this subsection, we prove Theorem~\ref{thm: R (S,X_S) (T,X_T)}. The key ingredient is the analysis
of variance (ANOVA) decomposition developed in \cite[Appendix~I]{MadimanBarronMR2319376}. For the convenience of the readers, we briefly explain the ANOVA decomposition without proofs. Let $X_1,X_2,\ldots,X_n$ be independent random variables. Write $X=(X_1,X_2,\ldots,X_n)$. Suppose that $\psi:F^n\to\mathbb{R}$ belongs to $L^2$, that is, $\psi$ is a measurable function such that $E[\psi^2(X_1,X_2,\ldots,X_n)]<\infty$. For each $j\in[n]$, define $E_j\psi$ by
\[E_j\psi(x_1,x_2,\ldots,x_n)=E[\psi(X_1,X_2,\ldots,X_n)|X_i=x_i,\forall i\neq j].\]
In particular, $E_j\psi$ does not depend on $x_j$. For a subset $t\subset[n]$, define the linear subspace
\[\mathcal{H}_{t}=\{\psi\in L^2:E_j\psi=\psi 1_{j\notin t},\forall j\in[n]\}.\]
In particular, for $\psi\in\mathcal{H}_t$, $\psi$ does not depend on $x_j$ for $j\notin t$. Then $(\mathcal{H}_t)_{t\subset[n]}$ are orthogonal. Denote by $\overline{E}_t$ the orthogonal projection from $L^2$ onto $\mathcal{H}_t$. In fact,
\[\overline{E}_t=\prod_{j\in t}(I-E_j)\prod_{k\notin t}E_k,\]
where $I$ is the identity map. In particular, $\overline{E}_{\emptyset}$ is equal to the usual expectation $E$. Then we have the orthogonal decomposition
\[\psi=\sum_{t\subset[n]}\overline{E}_t\psi.\]
If $\psi$ depends solely on $(x_j)_{j\in s}$ for some $s\subset[n]$ and $t$ is not a subset of $s$, then $\overline{E}_t\psi=0$ by the definition of $\overline{E}_t$ and the fact that $E_j\psi=\psi$ for $j\in t\setminus s$. Hence, if $\psi$ depends solely on $(x_j)_{j\in s}$, then we have that
\[\psi=\sum_{t\subset s}\overline{E}_t\psi.\]
Sometimes, we encounter the random variable $\overline{E}_t\psi(X)$. Since $\overline{E}_t\psi$ depends solely on $(x_j)_{j\in t}$, we may write $\overline{E}_t\psi(X_{t})$ instead of $\overline{E}_t\psi(X)$, where $X_t$ means the subvector $(X_j)_{j\in t}$.

We now prove Theorem~\ref{thm: R (S,X_S) (T,X_T)}. For a vector $x=(x_1,x_2,\ldots,x_n)$ and a subset $s\subset[n]$, we denote by $x_s$ the subvector $(x_j)_{j\in s}$. In this way, we have $X_S=(X_j)_{j\in S}$ and $X_T=(X_j)_{j\in T}$. Since $\partial$ is outside of $(F,\mathcal{F})$, we have the following.
\[R((Y_1,Y_2,\ldots,Y_n),(Z_1,Z_2,\ldots,Z_n))=R((S,X_S),(T,X_T)).\]
Consider two functions $\varphi$ and $\psi$ such that
\[E[\varphi(S,X_S)]=E[\psi(T,X_T)]=0, \Var(\varphi(S,X_S))<\infty\text{ and }\Var(\varphi(T,X_{T}))<\infty.\]
For $s\subset[n]$, define a function $\varphi_{s}$ by
\[\varphi_{s}(x_{s})=\varphi(s,x_{s}).\]
Note that $\varphi_{s}$ depends solely on $(x_j)_{j\in s}$. Similarly, we define $\psi_{t}$ for $t\subset[n]$. Then by the independence between $(S,T)$ and $X$, we have that
\begin{align}\label{eq: Var varphi S X_S}
\Var(\varphi(S,X_S))&=E[\varphi^2(S,X_S)]\notag\\
&=\sum_{s\subset[n]}P(S=s)E[\varphi_s^2(X_s)]\notag\\
&=\sum_{u\subset[n]}\sum_{s:s\supset u}P(S=s)E[\varphi_{s,u}^2(X_u)],
\end{align}
where
\[\varphi_{s,u}:=\overline{E}_{u}\varphi_{s}.\]
The last equality of \eqref{eq: Var varphi S X_S} comes from the orthogonality in the ANOVA decomposition
\[\varphi_s=\sum_{u:u\subset s}\overline{E}_{u}\varphi_{s}.\]
Similarly, we have that
\begin{equation}\label{eq: Var psi T X_T}
\Var(\psi(T,X_T))=\sum_{u\subset[n]}\sum_{t:t\supset u}P(T=t)E[\psi_{t,u}^2(X_u)],
\end{equation}
where
\[\psi_{t,u}:=\overline{E}_{u}\psi_{t}.\]
Moreover, we have that
\begin{align}\label{eq: Cov varphi S X_S psi T X_T 1}
\Cov(\varphi(S,X_S),\psi(T,X_T))=&E[\varphi(S,X_S)\psi(T,X_T)]\notag\\
=&\sum_{u\subset[n]}\sum_{(s,t):s\cap t\supset u}P(S=s,T=t)E[\varphi_{s,u}(X_u)\psi_{t,u}(X_u)]\notag\\
\leq &\sum_{u\subset[n]}\sum_{(s,t):s\cap t\supset u}P(S=s,T=t)\sqrt{E[\varphi_{s,u}^2(X_u)]}\sqrt{E[\psi_{t,u}^2(X_u)]}.
\end{align}

For $u\neq\emptyset$, we have that
\begin{multline}\label{eq: bound via r_u for non-empty u}
\sum_{(s,t):s\cap t\supset u}P(S=s,T=t)\sqrt{E[\varphi_{s,u}^2(X_u)]}\sqrt{E[\psi_{t,u}^2(X_u)]}\\
\leq r_u\sqrt{\sum_{s:s\supset u}P(S=s)E[\varphi_{s,u}^2(X_u)]}\sqrt{\sum_{t:t\supset u}P(T=t)E[\psi_{t,u}^2(X_u)]},
\end{multline}
where $r_u$ is the best constant $r$ for the inequality
\begin{equation}\label{eq: defn r_u}
\sum_{s,t}P(S=s,T=t)\alpha_s\beta_t\leq r\sqrt{\sum_{s}P(S=s)\alpha_s^2}\sqrt{\sum_{t}P(T=t)\beta_t^2}
\end{equation}
for all real $\alpha_s$ and $\beta_t$ such that $\alpha_{s}=0$ if $s$ does not contain $u$ and $\beta_{t}=0$ if $t$ does not contain $u$.

For $u=\emptyset$, we have that $\varphi_{s,u}=E[\varphi_s(X_s)]$ and $\psi_{t,u}=E[\psi_t(X_t)]$. Since $E[\varphi(S,X_S)]=0$ and $E[\psi(T,X_T)]=0$, we have that
\begin{equation*}
\sum_{s\subset[n]}P(S=s)E[\varphi_{s}(X_s)]=\sum_{t\subset[n]}P(T=t)E[\varphi_{t}(X_t)]=0.
\end{equation*}
Define $f(s)=E[\varphi_{s}(X_s)]$ and $g(t)=E[\varphi_{t}(X_t)]$. Then we have that
\[E[f(S)]=E[g(T)]=0, \Var(f(S))<\infty, \Var(g(T))<\infty.\]
By the definition of maximal correlation coefficients, we have that
\[\Cov(f(S),g(T))\leq R(S,T)\sqrt{\Var(f(S))}\sqrt{\Var(g(T))}.\]
For $u=\emptyset$, we have that $\varphi_{s,u}(X_u)=f(s)$, $\psi_{t,u}(X_u)=g(t)$ and
\begin{multline}\label{eq: bound via R S T for empty u}
\sum_{(s,t):s\cap t\supset u}P(S=s,T=t)\sqrt{E[\varphi_{s,u}^2(X_u)]}\sqrt{E[\psi_{t,u}^2(X_u)]}\\
\leq R(S,T)\sqrt{\sum_{s:s\supset u}P(S=s)E[\varphi_{s,u}^2(X_u)]}\sqrt{\sum_{t:t\supset u}P(T=t)E[\psi_{t,u}^2(X_u)]}.
\end{multline}

By \eqref{eq: Var varphi S X_S}, \eqref{eq: Var psi T X_T}, \eqref{eq: Cov varphi S X_S psi T X_T 1}, \eqref{eq: bound via r_u for non-empty u}, \eqref{eq: bound via R S T for empty u} and the Cauchy-Schwarz inequality, we have that
\begin{align}
\eqref{eq: Cov varphi S X_S psi T X_T 1}&\leq\max(R(S,T),\max(r_u:u\neq\emptyset))\notag\\
&\quad\times\sum_{u\subset[n]}\sqrt{\sum_{s:s\supset u}P(S=s)E[\varphi_{s,u}^2(X_u)]}\sqrt{\sum_{t:t\supset u}P(T=t)E[\psi_{t,u}^2(X_u)]}\notag\\
&\leq\max(R(S,T),\max(r_u:u\neq\emptyset))\notag\\
&\quad\times\sqrt{\sum_{u\subset[n]}\sum_{s:s\supset u}P(S=s)E[\varphi_{s,u}^2(X_u)]}\sqrt{\sum_{u\subset[n]}\sum_{t:t\supset u}P(T=t)E[\psi_{t,u}^2(X_u)]}\notag\\
&=\max(R(S,T),\max(r_u:u\neq\emptyset))\sqrt{\Var(\varphi(S,X_S))}\sqrt{\Var(\psi(T,X_T))}.
\end{align}
As $\varphi$ and $\psi$ are arbitrary, we conclude that
\[R((S,X_S),(T,X_T))\leq \max(R(S,T),\max(r_u:u\neq\emptyset)).\]
By the definition of $r_u$, for $u\subset\tilde{u}$, we have that $r_u\geq r_{\tilde{u}}$. Hence, we have that
\[\max(r_u:u\neq\emptyset)=\max(r_{\{j\}}:j\in[n]).\]

It remains to prove the reverse inequality \[R((S,X_S),(T,X_T))\geq\max(R(S,T),\max(r_{\{j\}}:j\in[n])).\]
For $j\in[n]$, we take
\[\varphi(S,X_S)=\alpha_{S}h(X_{j})1_{j\in S},\quad\psi(T,X_T)=\beta_{T}h(X_{j})1_{j\in T}\]
for some measurable function $h$ such that $E[h(X_{j})]=0$, $\Var(h(X_{j}))<\infty$ and for $\alpha_s$ and $\beta_t$ achieving the best constant in the definition of $r_{\{j\}}$ (see \eqref{eq: defn r_u}). It is straightforward to check that for this particular choice of functions, we have that
\[\Cov(\varphi(S,X_S),\psi(T,X_T))=r_{\{j\}}\sqrt{\Var(\varphi(S,X_S))\Var(\psi(T,X_T))}.\]
Hence, $R((S,X_S),(T,X_T))\geq r_{\{j\}}$ for each $j\in[n]$. On the other hand, it is clear that
\[R((S,X_S),(T,X_T))\geq R(S,T).\]
Hence, the reverse inequality is proved.

When $S$ and $T$ are independent, we have $R(S,T)=0$. By writing $\widetilde{\alpha}_s=\sqrt{P(S=s)}\alpha_s$ and $\widetilde{\beta}_t=\sqrt{P(T=t)}\beta_t$, we see that $r_{\{j\}}$ is precisely the spectral norm of the rank-one matrix $(M_{st})_{s,t:j\in s\cap t}$, where $M_{st}=\sqrt{P(S=s)P(T=t)}$. Hence, we have that
\[r_{\{j\}}=\sqrt{P(j\in S)P(j\in T)}.\]

\subsection{Proof of Theorem~\ref{thm: R S T}}
In this subsection, we prove Theorem~\ref{thm: R S T}. We will use the alternative definition \eqref{eq: alt defn R} of the maximal correlation coefficient, the mathematical induction and the following lemma.
\begin{lemma}\label{lem: R S T n-1}
Let $T$ be a uniform subset of $[n]=\{1,2,\ldots,n\}$ of size $n-1$. Given $T$, the random set $S$ is a uniform subset of $T$ of size $k$. Then we have that
\[R(S,T)=\sqrt{\frac{k}{(n-1)(n-k)}}.\]
\end{lemma}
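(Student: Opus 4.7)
The plan is to apply the alternative definition \eqref{eq: alt defn R} on the ``small'' side. Since $T$ has size $n-1$, it is determined by its singleton complement $I:=[n]\setminus T$, and $I$ is uniform on $[n]$. Hence $R(S,T)=R(S,I)=R(I,S)$, and by \eqref{eq: alt defn R} applied to functions of $I$,
\[R^2(S,T)=\sup\left\{E\bigl[(E[\psi(I)\mid S])^2\bigr]:\ \textstyle\sum_{i=1}^{n}\psi(i)=0,\ \sum_{i=1}^{n}\psi(i)^2=n\right\},\]
where I have used $E[\psi(I)]=\frac{1}{n}\sum_i\psi(i)$ and $E[\psi^2(I)]=\frac{1}{n}\sum_i\psi(i)^2$ to translate the normalization into the sums over $[n]$.

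Next I would reverse the conditioning to compute $E[\psi(I)\mid S]$. Writing the joint law of $(S,I)$ in the other order, the marginal of $S$ is uniform on $\binom{[n]}{k}$ (by the symmetry already noted above Lemma \ref{lem: R S T n-1}) and, conditionally on $S$, the point $I$ is uniform on $[n]\setminus S$. Therefore
\[E[\psi(I)\mid S]=\frac{1}{n-k}\sum_{i\notin S}\psi(i)=-\frac{1}{n-k}\sum_{i\in S}\psi(i),\]
the last equality using $\sum_{i=1}^n\psi(i)=0$. The core computation is then the second moment of the linear statistic $L_\psi(S):=\sum_{i\in S}\psi(i)$, which I would expand using the exchangeable marginals $P(i\in S)=k/n$ and $P(i,j\in S)=k(k-1)/(n(n-1))$ for $i\neq j$; collapsing the off-diagonal sum via $\sum_i\psi(i)=0$ leaves
\[E[L_\psi(S)^2]=\left(\frac{k}{n}-\frac{k(k-1)}{n(n-1)}\right)\sum_i\psi(i)^2=\frac{k(n-k)}{n(n-1)}\cdot n=\frac{k(n-k)}{n-1}.\]
Dividing by $(n-k)^2$ gives $E[(E[\psi(I)\mid S])^2]=k/((n-1)(n-k))$, a value independent of $\psi$, so both the sup and the inf over admissible $\psi$ coincide and equal this number; taking square roots yields the stated identity.

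The notable feature of this approach is that there is essentially no obstacle: because $I$ takes only $n$ values, testing on the $\psi(I)$ side reduces the maximal correlation to a single second-moment calculation of a hypergeometric linear statistic, which collapses to a constant thanks to the zero-mean constraint. The only boundary to check is the degenerate case $k=0$, where $S=\emptyset$ is constant and both sides vanish under the convention $0/0=0$; for $k=n-1$ the formula correctly returns $1$, matching $S=T$ almost surely.
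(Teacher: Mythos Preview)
Your proof is correct. Both your argument and the paper's start from the same idea: apply the alternative definition \eqref{eq: alt defn R} on the $T$-side (your $I$-side), exploiting that $T$ takes only $n$ values. The paper then writes $E[(E[\varphi(T)\mid S])^2]$ as a quadratic form $\sum_{i,j}A_{ij}\varphi(\{i\}^c)\varphi(\{j\}^c)$, observes that $A=cI+d\,\bm{1}\bm{1}^{T}$, and invokes the Perron--Frobenius theorem to read off $R^2(S,T)=n\lambda_2(A)$. Your route is more direct: you compute $E[\psi(I)\mid S]$ in closed form as the hypergeometric linear statistic $-\frac{1}{n-k}\sum_{i\in S}\psi(i)$ and expand its second moment by hand, the off-diagonal sum collapsing via the zero-mean constraint. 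Both arguments ultimately rest on the same fact---that the value is independent of the test function---but yours reaches it without any matrix or spectral machinery. One small remark: the claim that $S$ is uniform on $\binom{[n]}{k}$ and that $I$ is conditionally uniform on $[n]\setminus S$ is not actually stated above the lemma in the paper, so you should justify it in one line (it follows from permutation symmetry, or from the explicit joint law $P(I=i,S=s)=\frac{1}{n}\binom{n-1}{k}^{-1}\mathbf{1}_{i\notin s}$).
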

\begin{proof}
Let $\varphi$ be a measurable function such that $E[\varphi(T)]=0$ and $\Var(\varphi(T))<\infty$. Then we have that
\[E[\varphi(T)|S=s]=\sum_{i=1}^{n}\varphi(\{i\}^{c})P(T=\{i\}^c|S=s)\]
and that
\begin{align*}
E[(E[\varphi(T)|S])^2]&=\sum_{s\subset[n]}P(S=s)\left(\sum_{i=1}^{n}\varphi(\{i\}^c)P(T=\{i\}^c|S=s)\right)^2\\
&=\sum_{s\subset[n]}\sum_{i=1}^{n}\sum_{j=1}^{n}\varphi(\{i\}^c)\varphi(\{j\}^c)P(S=s)\\
&\quad\quad\quad\quad\quad\quad\times P(T=\{i\}^c|S=s)P(T=\{j\}^c|S=s)\\
&=\sum_{i=1}^{n}\sum_{j=1}^{n}\varphi(\{i\}^c)\varphi(\{j\}^c)A_{ij},
\end{align*}
where
\begin{align*}
A_{ij}&=\sum_{s\subset[n]}P(S=s)P(T=\{i\}^{c}|S=s)P(T=\{j\}^{c}|S=s)\\
&=\frac{1}{(n-k)^2}P(i\notin S,j\notin S)\\
&=\left\{\begin{array}{ll}
\frac{1}{(n-k)n}, & i=j,\\
\frac{(n-k-1)}{n(n-1)(n-k)}, & i\neq j.
\end{array}\right.
\end{align*}
The only element outside of $T$ is uniformly distributed in $[n]$. Hence, the condition $E[\varphi(T)]=0$ and $\Var(\varphi(T))<\infty$ is equivalent to
\[\sum_{i=1}^{n}\varphi(\{i\}^c)=0\]
and $\varphi(\{i\}^c)\in\mathbb{R}$ for each $i\in[n]$. Hence, by \eqref{eq: alt defn R}, we have that \[R^2(S,T)=\sup\left\{\sum_{i,j=1}^{n}A_{ij}\varphi(\{i\}^c)\varphi(\{j\}^c):\sum_{i=1}^{n}\varphi(\{i\}^c)=0,\sum_{i=1}^{n}(\varphi(\{i\}^c))^2=n\right\}.\]
Note that $A=(A_{ij})_{i,j\in[n]}$ is a real symmetric matrix with positive elements. Hence, the eigenvalues of $A$ are real. By the Perron-Frobenius theorem, $A$ has a simple eigenvalue $1/n$, and the moduli of other eigenvalues are strictly less than $1/n$. The eigenvector associated with $1/n$ is the column vector $\bm{1}=(1,1,\ldots,1)^{T}$. And the eigenvectors associated with other eigenvalues are perpendicular to $\bm{1}$. Hence, we get that
\[R^{2}(S,T)=n\lambda_2(A),\]
where $\lambda_2(A)$ is the second largest eigenvalue of $A$. Note that \[A=\frac{k}{n(n-1)(n-k)}I+\frac{n-k-1}{n(n-1)(n-k)}\bm{1}\bm{1}^{T},\]
where $I$ is the identity matrix. Hence, using a bit of linear algebra, we find that $\lambda_2(A)=\frac{k}{n(n-1)(n-k)}$. Hence, $R(S,T)=\sqrt{n\lambda_2(A)}=\sqrt{\frac{k}{(n-1)(n-k)}}$.
\end{proof}

Next, we give the proof of Theorem~\ref{thm: R S T} by induction on $n$. For $n=1$, $S$ and $T$ are independent. Hence, $R(S,T)=0$ and Theorem~\ref{thm: R S T} holds. Suppose that Theorem~\ref{thm: R S T} holds for $n\leq N$, and consider $n=N+1$. If $m=n$ or $m=0$, then $T$ is independent of $S$. Hence, $R(S,T)=0$ and Theorem~\ref{thm: R S T} holds. Therefore, we may assume that $0<m<n$ in the following. Let $U$ be a random subset of $[n]$ with cardinality $n-1$ that contains $T$. Moreover, assume that $U$ is uniform given $T$. In this way, we have $S\subset T\subset U$, and $(S,T,U)$ is uniformly distributed. Let $\varphi$ be a measurable function such that $E[\varphi(S)]=0$ and $E[(\varphi(S))^2]=1$. Since $(S,T,U)$ is Markov, we have that
\[E[\varphi(S)|T]=E[\varphi(S)|T,U].\]
Write $\varphi(S)=g(U)+f(S,U)$, where
\[g(U)=E[\varphi(S)|U].\]
Then we have that $E[f(S,U)|U]=0$ and $E[\varphi(S)|T,U]=g(U)+E[f(S,U)|T,U]$. Note that
\begin{align*}
(E[\varphi(S)|T])^2&=(E[\varphi(S)|T,U])^2\\
&=(g(U))^2+(E[f(S,U)|T,U])^2+2g(U)E[f(S,U)|T,U].
\end{align*}
Since $E[E[f(S,U)|T,U]|U]=E[f(S,U)|U]=0$, we have that
\begin{equation}\label{eq: EEvarphiST2U}
E[(E[\varphi(S)|T])^2|U]=(g(U))^2+E[(E[f(S,U)|T,U])^2|U].
\end{equation}
Since $E[f(S,U)|U]=0$ and $\Var(f(S,U)|U)<\infty$, by \eqref{eq: alt defn R}, we have that
\begin{equation}\label{eq: EEfSUTU2U}
E[(E[f(S,U)|T,U])^2|U]\leq (R(S,T|U))^2E[(f(S,U))^2|U],
\end{equation}
where $R(S,T|U)$ is the maximal correlation coefficient of the conditional distribution of $(S,T)$ given $U$. By the induction hypothesis, we have
\begin{equation}\label{eq: RSTU2}
(R(S,T|U))^2=\frac{k(n-1-m)}{m(n-1-k)}.
\end{equation}
Note that
\begin{equation}\label{eq: EfSU2U}
E[(f(S,U))^2|U]=E[(\varphi(S)-g(U))^2|U]=E[(\varphi(S))^2|U]-(g(U))^2.
\end{equation}
By \eqref{eq: EEvarphiST2U}, \eqref{eq: EEfSUTU2U}, \eqref{eq: RSTU2} and \eqref{eq: EfSU2U}, we obtain that
\begin{equation*}
E[(E[\varphi(S)|T])^2|U]\leq \frac{k(n-1-m)}{m(n-1-k)}E[(\varphi(S))^2|U]+\left(1-\frac{k(n-1-m)}{m(n-1-k)}\right)(g(U))^2.
\end{equation*}
By taking the expectation on both sides, we get that
\begin{equation}\label{eq: EEvarphiST2}
E[(E[\varphi(S)|T])^2]\leq \frac{k(n-1-m)}{m(n-1-k)}E[(\varphi(S))^2]+\left(1-\frac{k(n-1-m)}{m(n-1-k)}\right)E[(g(U))^2].
\end{equation}
Note that $g(U)=E[\varphi(S)|U]$. Hence, by \eqref{eq: alt defn R} and Lemma~\ref{lem: R S T n-1}, we get that
\begin{equation}\label{eq: EgU2}
E[(g(U))^2]\leq (R(S,U))^2E[(\varphi(S))^2]=\frac{k}{(n-1)(n-k)}E[(\varphi(S))^2].
\end{equation}
Finally, combining \eqref{eq: EEvarphiST2} and \eqref{eq: EgU2}, using the identity \[\frac{k(n-1-m)}{m(n-1-k)}+\left(1-\frac{k(n-1-m)}{m(n-1-k)}\right)\frac{k}{(n-1)(n-k)}=\frac{k(n-m)}{m(n-k)},\]
we get that
\[E[(E[\varphi(S)|T])^2]\leq \frac{k(n-m)}{m(n-k)}E[(\varphi(S))^2].\]
Since this holds for all measurable $\varphi$ with $E[\varphi(S)]=0$ and $\Var(\varphi(S))=1$, we conclude that
\[(R(S,T))^2\leq \frac{k(n-m)}{m(n-k)}.\]
To prove the reverse inequality $R(S,T)\geq \sqrt{\frac{k(n-m)}{m(n-k)}}$, we take $\varphi(S)=1_{1\in S}$ and $\psi(T)=1_{1\in T}$. Then we have that
\[R(S,T)\geq\rho(\varphi(S),\psi(T))=\sqrt{\frac{k(n-m)}{m(n-k)}}.\]

\subsection{Proof of Corollary~\ref{cor: generalization of Yu}}
In this subsection, we prove Corollary~\ref{cor: generalization of Yu}. In the proof, we use Theorem~\ref{thm: R (S,X_S) (T,X_T)}, Theorem~\ref{thm: R S T} and Lemma~\ref{lem: submultiplicative} (submultiplicative property).

Let $(S,T)$ be uniformly distributed with the constraints that
\[S\subset [n], T\subset [n],|S|=m, |T|=n-\ell\text{ and }|S\cap T|=m-\ell.\]
Suppose that $(S,T)$ is independent of $X_1,X_2,\ldots,X_n$. Let $U=S\cap T$. Since $X_1,X_2,\ldots,X_n$ are i.i.d., $(\sum_{i\in S}\delta_{X_i},\sum_{i\in T}\delta_{X_i})$ has the same distribution as $(\sum_{i=1}^{m}\delta_{X_i},\sum_{i=\ell+1}^{n}\delta_{X_i})$. Moreover, $\sum_{i\in S}\delta_{X_i}$ and $\sum_{i\in T}\delta_{X_i}$ are conditionally independent given $\sum_{i\in U}\delta_{X_i}$. Hence, by Lemma~\ref{lem: submultiplicative}, we have that
\begin{align}\label{eq: RSXSTXTRUXUSXSUXUTXT}
R\left(\sum_{i=1}^{m}\delta_{X_i},\sum_{i=\ell+1}^{n}\delta_{X_i}\right)&=R\left(\sum_{i\in S}\delta_{X_i},\sum_{i\in T}\delta_{X_i}\right)\notag\\
&\leq R\left(\sum_{i\in S}\delta_{X_i},\sum_{i\in U}\delta_{X_i}\right)R\left(\sum_{i\in U}\delta_{X_i},\sum_{i\in T}\delta_{X_i}\right)\notag\\
&\leq R((U,X_{U}),(S,X_{S}))R((U,X_U),(T,X_{T})).
\end{align}
We explicitly calculate $R((U,X_{U}),(S,X_{S}))$ in the following lemma.
\begin{lemma}\label{lem: calculation of rj special}
Consider independent non-degenerate random variables $X_1,X_2,\ldots,X_n$ taking values in a general measurable space. Let $(U,S)$ be uniformly distributed with the constraints that $|U|=a$, $|S|=b$ and $U\subset S\subset [n]$. Suppose that $(U,S)$ is independent of $X_1,X_2,\ldots,X_n$. Then we have that
\begin{equation}\label{eq: R U X_U S X_S}
R((U,X_U),(S,X_S))=\sqrt{a/b}.
\end{equation}
\end{lemma}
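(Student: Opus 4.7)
My approach is to invoke Theorem~\ref{thm: R (S,X_S) (T,X_T)} with its random subsets $(S,T)$ specialized to the nested pair $(U,S)$ of the current lemma. Since the $X_i$ are non-degenerate and $\partial\notin F$, that theorem reduces the problem to
\[R((U,X_U),(S,X_S))=\max\Bigl(R(U,S),\;\max_{j\in[n]}r_j\Bigr),\]
where $r_j$ is the best constant in the bilinear inequality
\[\sum_{u,s:\,j\in u\cap s}P(U=u,S=s)\,\alpha_u\beta_s\leq r\sqrt{\sum_{u:\,j\in u}P(U=u)\alpha_u^2}\,\sqrt{\sum_{s:\,j\in s}P(S=s)\beta_s^2}.\]
The plan is then to evaluate each contribution separately and take the maximum.

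For the first term, Theorem~\ref{thm: R S T} applies directly with its $(n,m,k)$ set to our $(n,b,a)$, giving $R(U,S)=\sqrt{a(n-b)/[b(n-a)]}$. Since $a\leq b$, this value is at most $\sqrt{a/b}$, so it cannot exceed what we will obtain from the $r_j$.

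The heart of the proof is to show $r_j=\sqrt{a/b}$ for every $j\in[n]$. The key observation is twofold: the nesting $U\subset S$ gives $\{j\in U\}\subset\{j\in S\}$, and since $U$ is a uniform $a$-subset of $S$ we have the conditional-expectation identity $E[\mathbf{1}_{j\in U}\mid S]=(a/b)\mathbf{1}_{j\in S}$. Combining these with Cauchy-Schwarz yields
\[\bigl(E[\alpha_U\beta_S\mathbf{1}_{j\in U}]\bigr)^2\leq E[\alpha_U^2\mathbf{1}_{j\in U}]\,E[\beta_S^2\mathbf{1}_{j\in U}]=\tfrac{a}{b}\,E[\alpha_U^2\mathbf{1}_{j\in U}]\,E[\beta_S^2\mathbf{1}_{j\in S}],\]
so $r_j\leq\sqrt{a/b}$. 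For the matching lower bound, plug in the constant choice $\alpha_u=\mathbf{1}_{j\in u}$, $\beta_s=\mathbf{1}_{j\in s}$: the left-hand side becomes $P(j\in U)=a/n$, while the two square roots on the right are $\sqrt{a/n}$ and $\sqrt{b/n}$, producing ratio exactly $\sqrt{a/b}$.

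Putting the pieces together, $\max(R(U,S),\max_jr_j)=\sqrt{a/b}$, which is \eqref{eq: R U X_U S X_S}. I do not anticipate a real obstacle; the only delicate point is spotting the conditional-expectation identity $E[\mathbf{1}_{j\in U}\mid S]=(a/b)\mathbf{1}_{j\in S}$, which is what turns a naive Cauchy-Schwarz into a sharp bound and is exactly where the nested structure of $U\subset S$ enters.
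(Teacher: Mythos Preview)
Your proof is correct and your computation of $r_j$ is genuinely simpler than the paper's. Both proofs share the reduction via Theorem~\ref{thm: R (S,X_S) (T,X_T)} and the appeal to Theorem~\ref{thm: R S T} for $R(U,S)$, so the only divergence is in showing $r_j=\sqrt{a/b}$. The paper rewrites $r_j$ as the spectral norm of the matrix $A_{us}=P(U=u,S=s)/\sqrt{P(U=u)P(S=s)}$, forms $B=AA^{*}$, conjugates to a nonnegative matrix $C$ with constant row sums $a/b$, and invokes Perron--Frobenius to read off the top eigenvalue. You instead exploit the nesting $U\subset S$ directly: the identity $E[\mathbf{1}_{j\in U}\mid S]=(a/b)\mathbf{1}_{j\in S}$ lets a single Cauchy--Schwarz step collapse $E[\beta_S^2\mathbf{1}_{j\in U}]$ to $(a/b)E[\beta_S^2\mathbf{1}_{j\in S}]$, giving the upper bound $r_j\le\sqrt{a/b}$ without any linear algebra; the constant test vectors then saturate it. Your route is shorter and more transparent for this nested setting, while the paper's matrix argument generalizes more readily (cf.\ the remark after Lemma~\ref{lem: calculation of rj special} about when $r_j^2=P(j\in S\mid U=u)P(j\in U\mid S=s)$).
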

\begin{remark}
Lemma~\ref{lem: calculation of rj special} is an extension of the Dembo-Kagan-Shepp inequality for independent random variables $X_1,X_2,\ldots,X_n$ with possibly different distributions.
\end{remark}
\begin{proof}
By Theorem~\ref{thm: R (S,X_S) (T,X_T)}, we have that
\[R((U,X_U),(S,X_S))=\max(R(U,S),\max(r_j:j\in[n])),\]
where $r_j$ is the best constant $r$ in the following inequality
\begin{equation}
\sum_{u,s:j\in u\cap s}P(U=u,S=s)\alpha_u\beta_s\leq r\sqrt{\sum_{u:j\in u}P(U=u)\alpha_u^2}\sqrt{\sum_{s:j\in s}P(S=s)\beta_s^2}
\end{equation}
for arbitrary real constants $\alpha_u$ and $\beta_s$. By Theorem~\ref{thm: R S T}, we see that
\[R(U,S)=\sqrt{\frac{a(n-b)}{b(n-a)}}\leq\sqrt{\frac{a}{b}}.\]
Hence, it suffices to prove that
\begin{equation}
r_{j}=\sqrt{\frac{a}{b}}.
\end{equation}
By changing the variables $\widetilde{\alpha}_u=\sqrt{P(U=u)}\alpha_u$ and $\widetilde{\beta}_{s}=\sqrt{P(S=s)}\beta_s$, we find that $r_{j}$ is precisely the spectral norm of the matrix $A=(A_{us})_{u,s:j\in u\cap s}$, where
\[A_{us}=\frac{P(U=u,S=s)}{\sqrt{P(U=u)P(S=s)}}\]
with the convention that $0/0=0$. Let $B=AA^{*}$, where $A^{*}$ denotes the transpose of $A$. Then $B$ is real, symmetric and positively definite. Moreover, $r_j^2$ is exactly the maximal eigenvalue of $B$. Note that
\begin{align*}
B_{uv}&=\sum_{s:j\in s}A_{us}A_{vs}\\
&=\sum_{s:j\in s}\frac{P(U=u,S=s)P(U=v,S=s)}{\sqrt{P(U=u)P(U=v)}P(S=s)}\\
&=\frac{\sqrt{P(U=u)}}{\sqrt{P(U=v)}}\sum_{s:j\in s}P(S=s|U=u)P(U=v|S=s).
\end{align*}
Then we see that $B$ is similar to $C$, where
\[C_{uv}=\sum_{s:j\in s}P(S=s|U=u)P(U=v|S=s)\]
with the convention that the conditional probability is zero if it is not well-defined. Hence, $r_j^2$ is equal to the maximal eigenvalue of $C$. Note that $P(j\in U|S=s)=a/b$ for $j\in s$ and $P(j\in S|U=u)=1$ for $j\in u$. Hence, we have that
\begin{align*}
\sum_{v:j\in v}C_{uv}&=\sum_{s:j\in s}\sum_{v:j\in v}P(S=s|U=u)P(U=v|S=s)\\
&=\sum_{s:j\in s}P(S=s|U=u)P(j\in U|S=s)\\
&=\frac{a}{b}\sum_{s:j\in s}P(S=s|U=u)\\
&=\frac{a}{b}P(j\in S|U=u)\\
&=\frac{a}{b}.
\end{align*}
Note that $C$ is a matrix with non-negative elements such that the sum of each row is the constant $a/b$. By the Perron-Frobenius theorem, the maximal eigenvalue of $C$ is precisely $a/b$. Thus, $r_j=\sqrt{a/b}$, and the proof is complete.
\end{proof}
By \eqref{eq: RSXSTXTRUXUSXSUXUTXT} and \eqref{eq: R U X_U S X_S}, we obtain the upper bound
\[R\left(\sum_{i=1}^{m}\delta_{X_i},\sum_{i=\ell+1}^{n}\delta_{X_i}\right)\leq \sqrt{\frac{m-\ell}{m}}\sqrt{\frac{m-\ell}{n-\ell}}=\frac{m-\ell}{\sqrt{m(n-\ell)}}.\]
For the lower bound, by the Dembo-Kagan-Shepp-Yu equality (\cite[Theorem~4.1]{YuMR2422962}), we have that
\[R\left(\sum_{i=1}^{m}\delta_{X_i},\sum_{i=\ell+1}^{n}\delta_{X_i}\right)\geq R\left(\sum_{i=1}^{m}\varphi(X_i),\sum_{i=\ell+1}^{n}\varphi(X_i)\right)=\frac{m-\ell}{\sqrt{m(n-\ell)}}\]
for some non-degenerate real-valued measurable function $\varphi$.
\begin{remark}
Under the assumption of Theorem~\ref{thm: R (S,X_S) (T,X_T)}, using similar arguments as in Lemma~\ref{lem: calculation of rj special}, we have the following observation about $r_j$: If $P(j\in S|U=u)$ does not depend on $u$ when $j\in u$ and $P(j\in U|S=s)$ does not depend on $s$ when $j\in s$, then we have that
\begin{equation*}
r_j^2=P(j\in S|U=u)P(j\in U|S=s).
\end{equation*}
\end{remark}

\subsection{Proof of Theorem~\ref{thm: applications in information theory}}
In this subsection, we prove Theorem~\ref{thm: applications in information theory} .

For a non-empty subset $s\subset[n]$, we define $U_{s}=\sum_{i\in s}X_i$ and denote by $f_{s}(u)$ the density of $U_{s}$. Let $\rho_{s}(u)=f'_{s}(u)/f_s(u)$ be the score function of $U_s$. Then the Fisher information
\[I(U_s)=E[\rho^2_{s}(U_s)].\]
For two nested subsets $s\subset t$, we have that
\[E[\rho_{s}(U_s)|U_{t}]=\rho_{t}(U_t)\]
by \cite[Lemma~1]{MadimanBarronMR2319376} (i.e. the convolution identity for scores). Define
\[\varphi(s,x_s)=\lambda_s\rho_s(u_s),\]
where $u_s=\sum_{i\in s}x_i$. Then we have that
\[E[\varphi(S,X_S)|T=t,U_{t}]=\left(\sum_{s\subset[n]}P(S=s|T=t)\lambda_s\right)\rho_{t}(U_t)=\mu_t\rho_{t}(U_t).\]
Hence, we get that
\[E[(E[\varphi(S,X_S)|T=t,U_{t}])^2|T=t]=\mu_t^2I(U_t).\]
Consequently, we have that
\[E[(E[\varphi(S,X_S)|T,U_{T}])^2]=E\left[E\left[(E[\varphi(S,X_S)|T,U_{T}])^2|T\right]\right]=\sum_{t\subset[n]}P(T=t)\mu_t^2I(U_t).\]
Note that
\[E[(\varphi(S,X_S))^2]=\sum_{s\subset[n]}P(S=s)\lambda_s^2I(U_s).\]
By \eqref{eq: alt defn R} and the fact that $(T,U_{T})$ is a measurable function of $(T,X_{T})$, we have that
\[E[(E[\varphi(S,X_S)|T,U_{T}])^2]\leq R^2((T,U_{T}),(S,X_{S}))E[(\varphi(S,X_S))^2]\leq R^2E[(\varphi(S,X_S))^2].\]
Therefore, we obtain that
\[\sum_{t\subset[n]}P(T=t)\mu_t^2I(U_t)\leq R^2\sum_{s\subset[n]}P(S=s)\lambda_s^2I(U_s).\]

\subsection{Examples}
Firstly, we consider an example studied in \cite{BucherStaudMR4835998}. They calculated the maximal correlation for the bivariate Marshall-Olkin exponential distribution. We state their result in the following theorem:
\begin{theorem}[B\"{u}cher-Staud]
Let $W_1,W_2$ and $W_3$ be independent exponential random variables. The parameter of $W_i$ is $\lambda_i>0$ for $i=1,2,3$. Let $V_1=\min(W_1,W_3)$ and $V_2=\min(W_2,W_3)$. Then we have that
\[R(V_1,V_2)=\frac{\lambda_3}{\sqrt{(\lambda_1+\lambda_3)(\lambda_2+\lambda_3)}}.\]
\end{theorem}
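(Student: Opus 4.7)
The plan is to recognize the Marshall–Olkin pair $(V_1,V_2)$ as the first-jump times of a two-dimensional compound Poisson process and then apply Theorem~\ref{thm: Levy}. Specifically, I would introduce the 2D L\'evy process $(X_t,Y_t)_{t\ge 0}$ with characteristic triple $b=0$, $\Sigma=0$, and L\'evy measure
\[
\nu=\lambda_1\delta_{(1,0)}+\lambda_2\delta_{(0,1)}+\lambda_3\delta_{(1,1)}.
\]
Set $\widetilde V_1=\inf\{t:X_t>0\}$ and $\widetilde V_2=\inf\{t:Y_t>0\}$. A direct Poisson-process calculation gives $P(\widetilde V_1>s,\widetilde V_2>t)=\exp(-\lambda_1 s-\lambda_2 t-\lambda_3\max(s,t))$, which is precisely the Marshall–Olkin joint survival function, so $(\widetilde V_1,\widetilde V_2)\stackrel{d}{=}(V_1,V_2)$.

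For the upper bound, since $\widetilde V_1$ is a measurable function of $(X_t)_{t\ge 0}$ and $\widetilde V_2$ of $(Y_t)_{t\ge 0}$, the monotonicity of the maximal correlation under measurable maps yields $R(V_1,V_2)\le R((X_t)_{t\ge 0},(Y_t)_{t\ge 0})$. By Theorem~\ref{thm: Levy}, this equals $\max(|\rho|,\mathrm{Op}(\nu))=\mathrm{Op}(\nu)$, since $\Sigma=0$ forces $\rho=0$. Because $\nu$ is supported on three points and $\varphi(0)=\psi(0)=0$ is imposed in the definition of $\mathrm{Op}(\nu)$, the defining inequality collapses to a scalar bilinear inequality in $\varphi(1),\psi(1)$:
\[
\lambda_3\,\varphi(1)\psi(1)\le s\sqrt{(\lambda_1+\lambda_3)\varphi(1)^2\,(\lambda_2+\lambda_3)\psi(1)^2},
\]
whose optimal constant is $s=\lambda_3/\sqrt{(\lambda_1+\lambda_3)(\lambda_2+\lambda_3)}$.

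For the matching lower bound, I would take the simple test functions $\mathbf{1}_{V_1>t}$ and $\mathbf{1}_{V_2>t}$ and let $t\to 0^{+}$. Using $P(V_i>t)=e^{-(\lambda_i+\lambda_3)t}$ and $P(V_1>t,V_2>t)=e^{-(\lambda_1+\lambda_2+\lambda_3)t}$, the covariance equals $e^{-(\lambda_1+\lambda_2+\lambda_3)t}(1-e^{-\lambda_3 t})\sim \lambda_3 t$, while $\operatorname{Var}(\mathbf{1}_{V_i>t})\sim(\lambda_i+\lambda_3)t$, so the Pearson correlation converges to $\lambda_3/\sqrt{(\lambda_1+\lambda_3)(\lambda_2+\lambda_3)}$, giving $R(V_1,V_2)\ge\lambda_3/\sqrt{(\lambda_1+\lambda_3)(\lambda_2+\lambda_3)}$.

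Nothing here looks genuinely hard: the real content is the embedding step that identifies $(V_1,V_2)$ with the first-jump times of a process covered by Theorem~\ref{thm: Levy}. Once that embedding is in place, both bounds are one-line computations. The only small point that requires care is checking that $R(V_1,V_2)$ actually coincides with $\mathrm{Op}(\nu)$ rather than being strictly smaller (as can happen for general L\'evy processes, cf.~Examples~\ref{exam: 3} and \ref{exam: 4}); this is precisely what the indicator-function lower bound confirms.
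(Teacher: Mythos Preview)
Your argument is correct and complete: the compound-Poisson embedding is accurate (the three independent jump clocks $T_1,T_2,T_3$ are exactly the $W_i$, so $(\widetilde V_1,\widetilde V_2)\stackrel{d}{=}(V_1,V_2)$), the computation of $\mathrm{Op}(\nu)$ is right, and the indicator lower bound is clean.

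However, this is \emph{not} the route the paper takes. The paper proves only the upper bound (deferring the full statement to B\"ucher--Staud), and does so via Corollary~\ref{cor: generalization of Yu} rather than Theorem~\ref{thm: Levy}: it writes each $W_i$ as a minimum of blocks of i.i.d.\ exponentials so that $V_1=\min_{i\le m}X_i$ and $V_2=\min_{\ell+1\le j\le n}X_j$, applies the DKSY-type bound $R\le (m-\ell)/\sqrt{m(n-\ell)}$, and then passes from rational to arbitrary rates $\lambda_i$ by approximation and the lower-semicontinuity Lemma~\ref{lem: lower semi-continuity}. Your approach is more direct---no discretization, no limiting argument, and it yields both bounds at once---while the paper's approach has the different virtue of showcasing Corollary~\ref{cor: generalization of Yu} and motivating Proposition~\ref{prop: Yu replace sum by min}. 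It is also worth noting that your construction falls under Example~\ref{exam: generic} once $\lambda_1=\lambda_2=0$, but for $\lambda_1,\lambda_2>0$ the L\'evy measure puts mass on the axes, which is exactly what makes $\mathrm{Op}(\nu)<1$ here.
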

We will give an alternative proof of the upper bound
\begin{equation}\label{eq: BS upper bound}
R(V_1,V_2)\leq\frac{\lambda_3}{\sqrt{(\lambda_1+\lambda_3)(\lambda_2+\lambda_3)}}
\end{equation}
using Corollary~\ref{cor: generalization of Yu} and Lemma~\ref{lem: lower semi-continuity}.

Let $1\leq\ell+1\leq m\leq n$. Let us take independent random variables $X_1,X_2,\ldots,X_n$. Each $X_i$ is an exponential random variable with parameter $\lambda$. Let
\[W_1=\min_{i=1,2,\ldots,\ell}X_i, W_2=\min_{i=m+1,m+2,\ldots,n}X_i\text{ and }W_3=\min_{i=\ell+1,\ell+2,\ldots,m}X_i.\]
Then $W_1$, $W_2$ and $W_3$ are independent. Moreover, $W_1$ is an exponential random variable with parameter $\ell\lambda$, $W_2$ is an exponential random variable with parameter $(n-m)\lambda$, and $W_3$ is an exponential random variable with parameter $(m-\ell)\lambda$. Let $V_1=\min(W_1,W_3)$ and $V_2=\min(W_2,W_3)$. By Corollary~\ref{cor: generalization of Yu}, we have that
\[R(V_1,V_2)\leq R\left(\sum_{i=1}^{m}\delta_{X_i},\sum_{j=\ell+1}^{n}\delta_{X_j}\right)=\frac{m-\ell}{\sqrt{m(n-\ell)}}.\]
Hence, \eqref{eq: BS upper bound} holds with $\lambda_1=\ell\lambda$, $\lambda_2=(n-m)\lambda$ and $\lambda_3=(m-\ell)\lambda$. Therefore, \eqref{eq: BS upper bound} holds for rational $\lambda_1,\lambda_2$ and $\lambda_3$. For general $\lambda_1,\lambda_2$ and $\lambda_3$, take $\lambda^{(N)}_{i}=[N\lambda_i]/N$ for $i=1,2,3$ and $N\geq 1$. For each $N\geq 1$, the corresponding random variables are $W^{(N)}_{1},W^{(N)}_{2},W^{(N)}_{3},V^{(N)}_{1}$ and $V^{(N)}_{2}$. As $N\to\infty$, $(V^{(N)}_{1},V^{(N)}_{2})$ converges in distribution to $(V_1,V_2)$. By Lemma~\ref{lem: lower semi-continuity}, we have that
\begin{align*}
R(V_1,V_2)&\leq\liminf_{N\to\infty}R(V^{(N)}_{1},V^{(N)}_{2})\\ &\leq\liminf_{N\to\infty}\frac{\lambda_3^{(N)}}{\sqrt{(\lambda_1^{(N)}+\lambda_3^{(N)})(\lambda_2^{(N)}+\lambda_3^{(N)})}}\\ &=\frac{\lambda_3}{\sqrt{(\lambda_1+\lambda_3)(\lambda_2+\lambda_3)}}.
\end{align*}
Using similar arguments, we obtain the following upper bound:
\begin{proposition}\label{prop: Yu replace sum by min}
Let $X_1,X_2,\ldots,X_n$ be i.i.d. real-valued random variables. Let $1\leq \ell+1\leq m\leq n$. Then we have that
\begin{equation}\label{eq: Yu replace sum by min}
R\left(\min_{i:1\leq i\leq m}X_i,\min_{j:\ell+1\leq j\leq n}X_j\right)\leq\frac{m-\ell}{\sqrt{m(n-\ell)}}.
\end{equation}
\end{proposition}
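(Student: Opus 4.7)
The plan is to reduce the statement directly to Corollary~\ref{cor: generalization of Yu} by observing that the minimum of a finite collection of real numbers is a Borel-measurable functional of the corresponding counting measure. Concretely, if $\mu=\sum_{i=1}^{m}\delta_{x_i}$ is the empirical measure of $m$ real points, then
\[\min_{1\leq i\leq m}x_i = \inf\{x\in\mathbb{R}:\mu((-\infty,x])\geq 1\},\]
and the right-hand side is a measurable functional of $\mu$ (viewed as a point measure on $\mathbb{R}$). Consequently $\min_{1\leq i\leq m}X_i$ is a measurable function of $\sum_{i=1}^{m}\delta_{X_i}$, and analogously $\min_{\ell+1\leq j\leq n}X_j$ is a measurable function of $\sum_{j=\ell+1}^{n}\delta_{X_j}$.

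Next, I would apply the monotonicity of $R$ under measurable transformations (property~5 in Subsection~2.1) to conclude
\[R\!\left(\min_{1\leq i\leq m}X_i,\ \min_{\ell+1\leq j\leq n}X_j\right) \leq R\!\left(\sum_{i=1}^{m}\delta_{X_i},\ \sum_{j=\ell+1}^{n}\delta_{X_j}\right).\]
If the $X_i$ are degenerate, both sides vanish by definition and the inequality is trivial. Otherwise, the $X_i$ are i.i.d.\ and non-degenerate, and Corollary~\ref{cor: generalization of Yu} evaluates the right-hand side as $(m-\ell)/\sqrt{m(n-\ell)}$, giving the desired upper bound.

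There is no substantive obstacle: all the combinatorial and analytic work is already packaged into Corollary~\ref{cor: generalization of Yu}, and Proposition~\ref{prop: Yu replace sum by min} is essentially a restatement of it for a specific measurable functional of the empirical measure. The same argument in fact yields \eqref{eq: Yu replace sum by min} with $\min$ replaced by any other symmetric, measurable real-valued statistic (such as $\max$, the median, or a fixed order statistic), since any such statistic factors through $\sum_{i}\delta_{X_i}$.
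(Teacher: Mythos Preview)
Your proposal is correct and follows essentially the same approach as the paper: observe that the minimum is a measurable functional of the empirical measure, apply the monotonicity of $R$ under measurable maps, and invoke Corollary~\ref{cor: generalization of Yu}. The paper merely says ``using similar arguments'' (referring back to its alternative proof of \eqref{eq: BS upper bound}, where exactly this reduction is carried out for exponential $X_i$), so your write-up in fact makes the argument more explicit than the paper does; your handling of the degenerate case and the closing remark about general symmetric statistics are nice bonuses.
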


\section{Open problems}\label{sect: open problems}

\begin{enumerate}
  \item Let $(X_t,Y_t)_{t\in[0,1]}$ be a two-dimensional L\'{e}vy bridge. Is there an expression for the maximal correlation coefficient $R((X_t)_{t\in [0,1]},(Y_t)_{t\in[0,1]})$?
  \item By \cite[Corollary~2.2]{BucherStaudMR4835998}, the upper bound in \eqref{eq: Yu replace sum by min} is sharp if $X_i$ follows an exponential distribution. Is it also sharp for other distributions?
\end{enumerate}

\appendix

\section{Proof of Lemma~\ref{lem: submultiplicative}}\label{appendix: A}
Lemma~\ref{lem: submultiplicative} is derived from the fact that the operator norm of the composition of two operators is not greater than the product of the operator norms of these two operators. In the proof, the Markov property of $(X,Y,Z)$ is crucially used.
\begin{proof}[Proof of Lemma~\ref{lem: submultiplicative}]
By the conditional independence of $X$ and $Z$ given $Y$, the following diagram commutes:
\begin{equation}
\xymatrix{
L^2_0(X) \ar[dd]^{\pi_1} \ar[rd]^{\pi_2} \ar@/^1pc/[rrd]^{\pi_3} & & \\
& L^2_0(Y) \ar[ld]^{\pi_4} \ar@{^(->}[r]^{\iota} & L^2_0(Y,Z) \ar@/^1pc/[lld]^{\pi_5}\\
L^2_0(Z)  & &
}
\end{equation}
where $\pi_1,\pi_2, \pi_3, \pi_4$ and $\pi_5$ are orthogonal projections and $\iota$ is the injection. Specifically, $\pi_1:W\mapsto E(W|Z)$, $\pi_2:W\mapsto E(W|Y)$, $\pi_3:W\mapsto E(W|Y,Z)$, $\pi_4:W\mapsto E(W|Z)$ and $\pi_5:W\mapsto E(W|Z)$ are conditional expectation operators. Hence, we have that
\[R(X,Z)=\|\pi_1\|=\|\pi_4\circ\pi_2\|\leq \|\pi_4\|\|\pi_2\|=R(Y,Z)R(X,Y),\]
where $\|\cdot\|$ denotes the operator norm.
\end{proof}

\section{Proof of Lemma~\ref{lem: lower semi-continuity}}\label{appendix: B}
To prove Lemma~\ref{lem: lower semi-continuity}, we need to use the classical result that $L^p$ functions can be approximated by bounded continuous functions. We give a precise statement as follows:
\begin{lemma}\label{lem: approximation of Lp functions by bounded continuous functions}
Consider a metric space S with the Borel $\sigma$-field $\mathcal{S}$, a bounded measure $\mu$ on $(S,\mathcal{S})$ and a constant $p > 0$. Then the bounded continuous functions on S are dense in $L^p(S,\mathcal{S},\mu)$. Thus, for any $f\in L^{p},$ there exist bounded continuous functions $f_{1},f_{2},\ldots:S\to\mathbb{R}$ with $\|f_n-f\|_p\to 0$.
\end{lemma}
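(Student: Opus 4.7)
My plan is to follow the classical three-step density argument: approximate $L^p$ functions by simple functions, approximate simple functions by indicators of closed sets, and finally approximate indicators of closed sets by bounded continuous functions via distance-to-set constructions.

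First I would handle the case $p \geq 1$ and indicate the minor modifications for $0 < p < 1$ (where $\|\cdot\|_p$ is a quasi-norm but the convergence arguments are unchanged because $d(f,g) = \int |f-g|^p\,\mathrm{d}\mu$ is a metric). Given $f \in L^p(S,\mathcal{S},\mu)$, I would first truncate $f$ by $f_M = (f \wedge M) \vee (-M)$ and use dominated convergence (with dominating function $|f|^p$) to get $\|f_M - f\|_p \to 0$. Then I would approximate the bounded function $f_M$ uniformly on its range by a simple function, whose $L^p$-error is controlled by $\mu(S) < \infty$. Thus it suffices to approximate indicators $\mathbf{1}_B$ for $B \in \mathcal{S}$ by bounded continuous functions.

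The key step is regularity of finite Borel measures on a metric space: for every $B \in \mathcal{S}$ and every $\varepsilon > 0$, there exist a closed set $F \subset B$ with $\mu(B \setminus F) < \varepsilon$. (This follows from the standard argument that the collection of sets with this inner-regularity property contains closed sets and is a Dynkin-type system, hence coincides with $\mathcal{S}$.) This gives $\|\mathbf{1}_B - \mathbf{1}_F\|_p^p < \varepsilon$. For the closed set $F$, I would set
\[
g_n(x) = \max\bigl(0,\,1 - n\,d(x,F)\bigr), \qquad x \in S,
\]
where $d(x,F) = \inf_{y \in F} d(x,y)$. Each $g_n$ is continuous (because $x \mapsto d(x,F)$ is $1$-Lipschitz) and satisfies $0 \leq g_n \leq 1$, so $g_n$ is bounded. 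Moreover $g_n(x) = 1$ for $x \in F$, and for $x \notin F$ one has $d(x,F) > 0$ (since $F$ is closed), hence $g_n(x) \to 0$. Therefore $g_n \to \mathbf{1}_F$ pointwise, and since $|g_n - \mathbf{1}_F|^p \leq 1 \in L^1(\mu)$, dominated convergence yields $\|g_n - \mathbf{1}_F\|_p \to 0$.

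Combining these three approximations via the triangle inequality (or its quasi-norm analog for $0 < p < 1$) produces a sequence of bounded continuous functions converging to $f$ in $L^p$. I do not expect any genuine obstacle here; the one point requiring a little care is the regularity statement for Borel measures on a general (not necessarily separable or Polish) metric space, which is why I would explicitly invoke the Dynkin-system argument rather than appeal to Polish-specific results, and the handling of $0 < p < 1$, where one works with the metric $\int |f-g|^p\,\mathrm{d}\mu$ throughout.
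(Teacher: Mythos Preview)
Your argument is correct and is the standard three-step density proof. However, the paper does not give its own proof of this lemma at all: immediately after the statement it simply writes ``The above lemma is precisely Lemma~1.37 in \cite{KallenbergMR4226142}'' and moves on. So there is no proof in the paper to compare against; you have supplied what the paper chose to outsource to a reference.
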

The above lemma is precisely Lemma~1.37 in \cite{KallenbergMR4226142}.
\begin{proof}[Proof of Lemma~\ref{lem: lower semi-continuity}]
Let $(X_n,Y_n)$ be a sequence of random variables taking values in the measurable space $(S,\mathcal{S})$. Suppose that $(X_n,Y_n)$ converges weakly to $(X,Y)$ as $n\to\infty$. It suffices to show that
\begin{equation*}
R(X,Y)\leq\liminf_{n\to\infty}R(X_n,Y_n).
\end{equation*}
For any $\varepsilon>0$, by the definition of the maximal correlation coefficient, there exist $\varphi$ and $\psi$ such that
\[R(X,Y)\leq\rho(\varphi(X),\psi(Y))+\varepsilon\]
and that $\Var(\varphi(X))>0$ and $\Var(\psi(Y))>0$. By Lemma~\ref{lem: approximation of Lp functions by bounded continuous functions} with $p=2$, for any $\delta>0$, there exist bounded continuous functions $f$ and $g$ such that
\begin{equation*}
E[(f(X)-\varphi(X))^2]<\delta, E[(g(Y)-\psi(Y))^2]<\delta.
\end{equation*}
Since $\Var(\varphi(X))>0$ and $\Var(\psi(Y))>0$, for sufficiently small $\delta>0$, we have that $\Var(f(X))>0$, $\Var(g(Y))>0$ and $\rho(\varphi(X),\psi(Y))\leq\rho(f(X),g(Y))+\varepsilon$. By weak convergence of $(X_n,Y_n)$ towards $(X,Y)$, we have that
\[\lim_{n\to\infty}\rho(f(X_n),g(Y_n))=\rho(f(X),g(Y)).\]
By the definition of maximal correlation coefficients,
\[\rho(f(X_n),g(Y_n))\leq R(X_n,Y_n).\]
Hence, for any $\varepsilon>0$, we have that
\[\liminf_{n\to\infty}R(X_n,Y_n)\geq\liminf_{n\to\infty}\rho(f(X_n),g(Y_n))=\rho(f(X),g(Y))\geq R(X,Y)-2\varepsilon.\]
By taking $\varepsilon\to 0$, we obtain the desired result.
\end{proof}

\section{Proof of Remark~\ref{rem: MB implies DKS}}\label{appendix: C}
In this part, we prove Remark~\ref{rem: MB implies DKS}.

Let $X_1,X_2,\ldots,X_n$ be i.i.d. real-valued random variables. For fixed $m\in[n]$, let $T$ be a uniform random subset of $\{1,2,\ldots,n\}$ with size $m$. For $i=1,2,\ldots,n$, define
\[Y_i=\left\{\begin{array}{ll}
X_i, & \text{if }i\in T,\\
\partial, & \text{otherwise},
\end{array}\right.\]
where $\partial$ is a special point outside of $\mathbb{R}$. By the Madiman-Barron inequality, for $n\geq 1$,
\[R((X_1,X_2,\ldots,X_n),(Y_1,Y_2,\ldots,Y_n))\leq\sqrt{m/n}.\]
Then $(\sum_{i=1}^{n}X_i,\sum_{i=1}^{n}Y_i1_{Y_i\neq\partial})$ has the same joint distribution as $(S_n,S_m)$ in \eqref{eq: BDKS identity}. Hence, we see that
\[R(S_n,S_m)\leq R((X_1,X_2,\ldots,X_n),(Y_1,Y_2,\ldots,Y_n))\leq\sqrt{m/n}.\]
Analogous results also hold for i.i.d. random vectors $X_1,X_2,\ldots,X_n$.

\section*{Acknowledgement}
The authors would like to thank the anonymous referees for their careful reading and valuable suggestions. We are grateful to Professor Renming Song for his careful reading and helpful suggestions to improve the English writing. Finally, we are grateful to Professor Elton P. Hsu who drew our attention to the problem of maximal correlations.

\bibliographystyle{alpha}

\begin{thebibliography}{LBCnM06}

\bibitem[ABBN04]{ArtsteinBallBartheNaorMR2083473}
Shiri Artstein, Keith~M. Ball, Franck Barthe, and Assaf Naor.
\newblock Solution of {S}hannon's problem on the monotonicity of entropy.
\newblock {\em J. Amer. Math. Soc.}, 17(4):975--982, 2004.

\bibitem[AG76]{AhlswedeGacsMR424401}
Rudolf Ahlswede and P\'{e}ter G\'{a}cs.
\newblock Spreading of sets in product spaces and hypercontraction of the
  {M}arkov operator.
\newblock {\em Ann. Probability}, 4(6):925--939, 1976.

\bibitem[App09]{ApplebaumMR2512800}
David Applebaum.
\newblock {\em L\'evy processes and stochastic calculus}, volume 116 of {\em
  Cambridge Studies in Advanced Mathematics}.
\newblock Cambridge University Press, Cambridge, second edition, 2009.

\bibitem[BDK05]{BrycDemboKaganMR2141340}
W.~Bryc, A.~Dembo, and A.~Kagan.
\newblock On the maximum correlation coefficient.
\newblock {\em Theory of Probability \& Its Applications}, 49(1):132--138,
  2005.

\bibitem[BF85]{BreimanMR0803258}
Leo Breiman and Jerome~H. Friedman.
\newblock Estimating optimal transformations for multiple regression and
  correlation.
\newblock {\em J. Amer. Statist. Assoc.}, 80(391):580--619, 1985.
\newblock With discussion and with a reply by the authors.

\bibitem[Bil99]{BillingsleyMR1700749}
Patrick Billingsley.
\newblock {\em Convergence of probability measures}.
\newblock Wiley Series in Probability and Statistics: Probability and
  Statistics. John Wiley \& Sons, Inc., New York, second edition, 1999.
\newblock A Wiley-Interscience Publication.

\bibitem[BS25]{BucherStaudMR4835998}
Axel B\"ucher and Torben Staud.
\newblock On the maximal correlation coefficient for the bivariate {M}arshall
  {O}lkin distribution.
\newblock {\em Statist. Probab. Lett.}, 219:Paper No. 110323, 4, 2025.

\bibitem[CF60]{CsakiFischerMR0126952}
P\'{e}ter Cs\'{a}ki and J\'{a}nos Fischer.
\newblock Contributions to the problem of maximal correlation.
\newblock {\em Magyar Tud. Akad. Mat. Kutat\'{o} Int. K\"{o}zl.}, 5:325--337,
  1960.

\bibitem[CF63]{CsakiFischerMR0166833}
P\'{e}ter Cs\'{a}ki and J\'{a}nos Fischer.
\newblock On the general notion of maximal correlation.
\newblock {\em Magyar Tud. Akad. Mat. Kutat\'{o} Int. K\"{o}zl.}, 8:27--51,
  1963.

\bibitem[Cou16]{CourtadeMR3638565}
Thomas~A. Courtade.
\newblock Monotonicity of entropy and {F}isher information: a quick proof via
  maximal correlation.
\newblock {\em Commun. Inf. Syst.}, 16(2):111--115, 2016.

\bibitem[DKS01]{DemboKaganSheppMR1828509}
Amir Dembo, Abram Kagan, and Lawrence~A. Shepp.
\newblock Remarks on the maximum correlation coefficient.
\newblock {\em Bernoulli}, 7(2):343--350, 2001.

\bibitem[DY21]{DadounYoussefMR4255828}
Benjamin Dadoun and Pierre Youssef.
\newblock Maximal correlation and monotonicity of free entropy and of {S}tein
  discrepancy.
\newblock {\em Electron. Commun. Probab.}, 26:Paper No. 24, 10, 2021.

\bibitem[Geb41]{GebeleinMR7220}
Hans Gebelein.
\newblock Das statistische {P}roblem der {K}orrelation als {V}ariations und
  {E}igenwertproblem und sein {Z}usammenhang mit der {A}usgleichsrechnung.
\newblock {\em Z. Angew. Math. Mech.}, 21:364--379, 1941.

\bibitem[HLP88]{HardyLittlewoodPolyaMR944909}
G.~H. Hardy, J.~E. Littlewood, and G.~P\'olya.
\newblock {\em Inequalities}.
\newblock Cambridge Mathematical Library. Cambridge University Press,
  Cambridge, 1988.
\newblock Reprint of the 1952 edition.

\bibitem[It{\^o}56]{ItoMR0077017}
Kiyosi It{\^o}.
\newblock Spectral type of the shift transformation of differential processes
  with stationary increments.
\newblock {\em Trans. Amer. Math. Soc.}, 81:253--263, 1956.

\bibitem[JS03]{JacodShiryaevMR1943877}
Jean Jacod and Albert~N. Shiryaev.
\newblock {\em Limit theorems for stochastic processes}, volume 288 of {\em
  Grundlehren der mathematischen Wissenschaften [Fundamental Principles of
  Mathematical Sciences]}.
\newblock Springer-Verlag, Berlin, second edition, 2003.

\bibitem[KA12]{KamathAnantharam6483335}
Sudeep Kamath and Venkat Anantharam.
\newblock Non-interactive simulation of joint distributions: The
  {H}irschfeld-{G}ebelein-{R}\'{e}nyi maximal correlation and the
  hypercontractivity ribbon.
\newblock In {\em 2012 50th Annual Allerton Conference on Communication,
  Control, and Computing (Allerton)}, pages 1057--1064, 2012.

\bibitem[KA16]{KamathAnantharamMR3506743}
Sudeep Kamath and Venkat Anantharam.
\newblock On non-interactive simulation of joint distributions.
\newblock {\em IEEE Trans. Inform. Theory}, 62(6):3419--3435, 2016.

\bibitem[Kal21]{KallenbergMR4226142}
Olav Kallenberg.
\newblock {\em Foundations of modern probability}, volume~99 of {\em
  Probability Theory and Stochastic Modelling}.
\newblock Springer, Cham, third edition, [2021] \copyright2021.

\bibitem[Kun04]{KunitaMR2083711}
Hiroshi Kunita.
\newblock Representation of martingales with jumps and applications to
  mathematical finance.
\newblock In {\em Stochastic analysis and related topics in {K}yoto}, volume~41
  of {\em Adv. Stud. Pure Math.}, pages 209--232. Math. Soc. Japan, Tokyo,
  2004.

\bibitem[Lan57]{Lancaster}
H.~O. Lancaster.
\newblock {Some properties of the bivariate normal distribution considered in
  the form of a contingency table}.
\newblock {\em Biometrika}, 44(1-2):289--292, 06 1957.

\bibitem[LBCnM06]{LopezCastanoMR2207171}
Fernando L\'opez-Bl\'azquez and Antonia Casta\~no Mart\'inez.
\newblock Upper and lower bounds for the correlation ratio of order statistics
  from a sample without replacement.
\newblock {\em J. Statist. Plann. Inference}, 136(1):43--52, 2006.

\bibitem[LBSMn98]{LopezSalamancaMR1622148}
Fernando L\'opez-Bl\'azquez and Bego\~na Salamanca-Mi\~no.
\newblock An upper bound for the correlation ratio of records.
\newblock {\em Metrika}, 47(2):165--174, 1998.

\bibitem[LBSMn14]{LopezSalamancaMR3252649}
F.~L\'opez~Bl\'azquez and B.~Salamanca Mi\~no.
\newblock Maximal correlation in a non-diagonal case.
\newblock {\em J. Multivariate Anal.}, 131:265--278, 2014.

\bibitem[LWK94]{LiuMR1279653}
Jun~S. Liu, Wing~Hung Wong, and Augustine Kong.
\newblock Covariance structure of the {G}ibbs sampler with applications to the
  comparisons of estimators and augmentation schemes.
\newblock {\em Biometrika}, 81(1):27--40, 1994.

\bibitem[MB07]{MadimanBarronMR2319376}
Mokshay Madiman and Andrew Barron.
\newblock Generalized entropy power inequalities and monotonicity properties of
  information.
\newblock {\em IEEE Trans. Inform. Theory}, 53(7):2317--2329, 2007.

\bibitem[Nev92]{NevzorovMR1202799}
V.~B. Nevzorov.
\newblock A characterization of exponential distributions by correlations
  between records.
\newblock {\em Math. Methods Statist.}, 1:49--54, 1992.

\bibitem[Nov04]{NovakMR2089006}
S.~Y. Novak.
\newblock On {G}ebelein's correlation coefficient.
\newblock {\em Statist. Probab. Lett.}, 69(3):299--303, 2004.

\bibitem[PX13]{PapadatosXifaraMR3054093}
Nickos Papadatos and Tatiana Xifara.
\newblock A simple method for obtaining the maximal correlation coefficient and
  related characterizations.
\newblock {\em J. Multivariate Anal.}, 118:102--114, 2013.

\bibitem[R\'59]{RenyiMR0115203}
A.~R\'{e}nyi.
\newblock On measures of dependence.
\newblock {\em Acta Math. Acad. Sci. Hungar.}, 10:441--451 (unbound insert),
  1959.

\bibitem[Sar58]{SarmanovMR99095}
O.~V. Sarmanov.
\newblock Maximum correlation coefficient (non-symmetrical case).
\newblock {\em Dokl. Akad. Nauk SSSR}, 121:52--55, 1958.

\bibitem[SM85]{SzekelyMoriMR792800}
G.~J. Sz\'ekely and T.~F. M\'ori.
\newblock An extremal property of rectangular distributions.
\newblock {\em Statist. Probab. Lett.}, 3(2):107--109, 1985.

\bibitem[Ter83]{TerrellMR704575}
George~R. Terrell.
\newblock A characterization of rectangular distributions.
\newblock {\em Ann. Probab.}, 11(3):823--826, 1983.

\bibitem[Wit75]{WitsenhausenMR363678}
H.~S. Witsenhausen.
\newblock On sequences of pairs of dependent random variables.
\newblock {\em SIAM J. Appl. Math.}, 28:100--113, 1975.

\bibitem[Yan12]{YangMR2962669}
Bicheng Yang.
\newblock Hilbert-type integral operators: norms and inequalities.
\newblock In {\em Nonlinear analysis}, volume~68 of {\em Springer Optim.
  Appl.}, pages 771--859. Springer, New York, 2012.

\bibitem[Yu08]{YuMR2422962}
Yaming Yu.
\newblock On the maximal correlation coefficient.
\newblock {\em Statist. Probab. Lett.}, 78(9):1072--1075, 2008.

\end{thebibliography}

\end{document}